\subjclass[2020]{ 37A05, 37A25, 37C30, 37D20, 37E05, 46F05, 26A16} 
\keywords{ distribution, Birkhoff sums, expanding maps, Anosov diffeomorphism, Zygmund, Log-Lipschitz, invariant distribution}
\title[Birkhoff sums as distributions I: Regularity ]{Birkhoff sums as distributions I: \\ {\small Regularity}}
\author[C.  Grotta-Ragazzo]{Clodoaldo Grotta-Ragazzo}
\address{ Instituto de Matem\'atica e Estat\'istica,  Universidade de S\~ao Paulo, Rua do Mat\~ao, 1010, Cidade Universit\'aria, S\~ao Paulo-SP, CEP 05508-090, Brazil }
\email{ragazzo@usp.br} 
\author[D.  Smania]{Daniel Smania}
\address{Departamento de Matem\'atica, Instituto de Ci\^encias Matem\'aticas e de Computa\c{c}\~ao (ICMC), Universidade de S\~ao Paulo (USP), Avenida Trabalhador S\~ao-carlense, 400, S\~ao Carlos-SP, CEP 13566-590,  Brazil.}
\email{smania@icmc.usp.br} 
\urladdr{\url{https://sites.icmc.usp.br/smania/}}
\thanks{C.G.R is partially supported by FAPESP, Brazil grant 2016/25053-8. D.S. was supported by FAPESP Projeto Tem\'atico 2017/06463-3 and Bolsas de Produtividade em Pesquisa CNPq 306622/2019-0 and  311916/2023-6, and CNPq Universal 430351/2018-6. 
}
  \newtheorem{theorem}[equation]{Theorem}
\newtheorem{lemma}[equation]{Lemma}
\newtheorem{proposition}[equation]{Proposition}
\newtheorem{mainthm}{Theorem}
\renewcommand{\theequation}{\thesection.\ifnum\value{subsection}=0 1\else \arabic{subsection}\fi.\arabic{equation}}
\newcommand\numberthis{\addtocounter{equation}{1}\tag{\theequation}} 
\theoremstyle{remark}
\newtheorem{rmk}[equation]{\bf Remark}
\newcommand{\erm}{{\rm e}}
\newcommand{\cinf} {\hbox{\rm C}^\infty}
\newcommand{\calF}{{\mathcal{F}}}
\newcommand{\calS}{{\mathcal{S}}}
\newcommand{\ap} {\alpha}
\newcommand{\dt} {\delta}
\newcommand{\Gm} {\Gamma}
\newcommand{\gm} {\gamma}
\newcommand{\Lb} {\Lambda}
\newcommand{\om} {\omega}
\def\Z{{\mathbb Z}}
\def\R{{\mathbb R}}
\def\Tm{{\mathbb T}}
\def\D{{\mathbb D}}
\DeclarePairedDelimiter{\floor}{\lfloor}{\rfloor}
\newcommand{\secdot}[1]{\arabic{#1}}
\newcommand{\Cll}[2][normal]{\Cl[#1]{#2}}
\newcommand{\Crr}[1]{\Cr{#1}}
\newcounter{change}
\providecommand\@dotsep{5}
\renewcommand{\listoftodos}[1][\@todonotes@todolistname]{%
  \@starttoc{tdo}{#1}}
\begin{document}

\begin{abstract}  We study Birkhoff sums as distributions. We obtain regularity results on  such distributions for various dynamical systems with hyperbolicity, as  hyperbolic linear maps on the torus and  piecewise expanding maps on the interval.  We also give some applications, as the study of advection in discrete dynamical systems. \end{abstract}

\maketitle

\setcounter{tocdepth}{2}
\tableofcontents




\section{Introduction}

Consider a measurable dynamical system $f\colon X\rightarrow X$,  where $X$ is a measure space endowed with a reference measure $m$ and such that $f$ has an invariant probability  $\mu$ that is absolutely continuous with respect to $m$.  Let $\phi\colon X\rightarrow \mathbb{C}$  be a measurable observable. One of the main goals in the study of the ergodic theory is to study the statistical properties of the sequence of variables
$$\phi, \phi\circ f , \phi\circ f^2, \dots$$
The Birkhoff ergodic theorem, for instance, says that 
$$\lim_N \frac{1}{N} \sum_{k=0}^N\phi\circ  f^k(x)$$ 
converges for $\mu$-almost every point $x$. 
On the other hand if we  consider the {\it Birkhoff sum }
$$\sum_{k=0}^\infty \phi\circ  f^k.$$ 
then in very regular situations (piecewise expanding maps and Anosov diffeomorphisms) this sum  may  {\it not } converge almost everywhere. Suppose now that $X$ is a manifold with a  measure $m$. Then in many well-known cases  we have that 
$$\sum_{k=0}^\infty \int \psi \phi\circ  f^k \ dm.$$ 
{\it does}  converge when $\psi$ and $\phi$ are regular  enough. This allows us to define
$$\int  \psi \sum_{k=0}^\infty \phi\circ f^k \ dm= \sum_{k=0}^\infty \int \psi \phi\circ  f^k \ dm$$
for $\psi\in C^\infty$, in such way that the Birkhoff sum induces a {\it distribution}. 
We should see the right hand-side of this equation as the {\it definition } of the left-hand side, which  is not a proper Lebesgue integral.

Our goal is to study this distribution's regularity  for several dynamical systems with strong hyperbolicity: maps with exponential decay of correlations, linear Anosov maps, and piecewise expanding maps on the interval. We  give  three main motivations to this study. 
\label{int}

\subsection{Deformations of Dynamical Systems}
A {\it deformation} of a dynamical system $f_0\colon M \to M$ is a smooth family 
$f_t\colon M \to M$, with $t \in (-\epsilon, \epsilon)$, such that $f_t$ is conjugate to $f_0$ for every $t$. That is, there exist homeomorphisms $h_t\colon M \to M$ such that
$$
    h_t \circ f_0 = f_t \circ h_t.
$$

For certain types of maps $f_0$ exhibiting hyperbolicity (e.g., expanding maps, piecewise expanding maps, and Anosov diffeomorphisms), the conjugacies $h_t$ are rarely smooth. In fact, they are often singular with respect to the Lebesgue measure, meaning they map a set of full Lebesgue measure to a set of zero Lebesgue measure. This poses a significant challenge in studying such conjugacies. However, for one-dimensional dynamics (maps acting on an interval or the circle), and even for Anosov diffeomorphisms in arbitrary dimensions, the maps
$$
    t \mapsto h_t(x)
$$
are smooth. This enables the study of {\it infinitesimal deformations}, defined by
$$
    \alpha(x) = \partial_t h_t(x)|_{t=0}.
$$

Investigating the regularity of the function $\alpha$ provides precise insights into the regularity of the conjugacies $h_t$ and their dependence on the parameter $t$. For piecewise expanding maps on an interval, this approach has been developed in C-R and S. \cite{dois}. We believe, however, that it can be extended to much broader contexts. The connection to this paper lies in the fact that $\alpha$ is generally not differentiable. Nevertheless, it has derivatives in the {\it sense of distributions}, and {\it its derivative is a Birkhoff sum}. Consequently, $\alpha$ can be seen as a {\it primitive} of a Birkhoff sum. In this work, we conduct a detailed study of primitives of Birkhoff sums for piecewise expanding maps in one dimension. See Section \ref{main1d} for the main results. In particular, we analyze the statistical properties of these primitives.

\subsection{Invariant Distributions}
An {\it invariant distribution} of a dynamical system $T$ is a distribution $\psi$ that satisfies
$$
    \langle \psi, \phi \circ T \rangle = \langle \psi, \phi \rangle
$$
for all test functions $\phi$. A familiar example of an invariant distribution is a $T$-invariant probability measure. One might ask whether these are the only possible examples. Avila and Kocsard \cite{ak} (see also Navas and Triestino \cite{navas}) proved that for a $C^\infty$ diffeomorphism of the circle with irrational rotation, the unique invariant probability measure is also its {\it only} invariant distribution (up to multiplication by a constant). 

For piecewise expanding maps on an interval, the situation is markedly different. By utilizing Birkhoff sums as distributions, we construct invariant distributions that are {\it not} (complex-valued) measures. See Section \ref{maininv} for the main results.

 \subsection{Advection and cohomological equations  in discrete dynamics} The mathematical problems to be considered in this
 paper are also  motivated by the following physical question.

Let $M$ be a $C^\infty$ manifold that represents 
the state space of a discrete-time 
 dynamical system $f:M\to M$, where $f$ is  a
$C^\infty$ diffeomorphism. 
The manifold is endowed with 
 a $C^\infty$ volume form $m$, which may be that 
associated to a Riemannian metric on $M$. Let  
$J:M\to \R$ be  the Jacobian determinant of $f$ with 
respect to $m$, $J$ is 
supposed positive ($f$ is orientation preserving).
Notice that  $m$
  can be invariant under $f$, $J=1$, or not, $J\ne 1$.  Assume that a  fluid lies on $M$
and that  the dynamic changes 
the position of the fluid particles, the particle  initially at 
$x_0\in M$ moves to $x_1=f(x_0)$. Several extensive 
 physical quantities may be  associated with a fluid:
mass, internal energy, the mass of a diluted chemical substance,
electric charge, etc.
Each of these properties is characterized by a density function 
$\rho$ with respect to the volume form $m$ such that 
$\int_\Omega\rho \ dm$ gives the amount of the property 
 inside the region $\Omega$. For simplicity 
we assume that $\rho$ is the electric charge  density
or just the charge density. 
Suppose that charge is
 advected by the dynamics (there is no charge diffusion 
between neighboring fluid particles). In this case if there 
is no  charge input-output  to the system
then  advection yields: if   $\rho_j:M\to\R$
is the density  at time $j$ then 
$f_{*}(\rho_jm)=\rho_{j+1}m$, for every $i\in \mathbb{Z}$,   implies that the density 
 at time $j+1$ is 
\begin{equation} \label{rhoh}
\rho_{j+1}=L \rho_j\end{equation} 
where $L\colon L^1(m)\rightarrow L^1(m)$ is the Ruelle-Perron-Frobenious operator
\begin{equation} (L\rho)(x)= \frac{\rho\circ f^{-1}(x)}{J\circ f^{-1}(x)}.
\end{equation}
Note that 
\begin{equation} (L^{-1}\rho)(x)= J \cdot ( \rho\circ f).
\end{equation}
More generally, at each time,  charge can be added to or
 subtracted from the fluid  by means of a  $C^\infty$
distribution of sources and sinks
 that is supposed to be  time-independent.
In this case  the form   $\rho_jm$ at time $j$
is mapped to 
$\rho_{j+1}m=f_{*}(\rho_jm+Rm)$ at time $j+1$,
where
$R:M\to \mathbb{R}$ is the density function of sources
 (wherever $R>0$) and of sinks (wherever $R<0$). 
These relations imply  
\begin{equation}
 \rho_{j+1}=L(\rho_j+R)
\label{coceq}
\end{equation}
Given a  $C^\infty$ function $\phi:M\to\R$ with compact support, 
 $\phi\in C^\infty_c(M)$, a measurent of  charge
 at time $j$ is defined as 
\begin{equation}
Q_j(\phi)=\int_M\phi \rho_j \ dm
\label{Qk}
\end{equation}
If $\phi$ is positive and $\int_M\phi \ dm=1$, then 
  $Q_j(\phi)$ is the average charge density 
with respect to the measure $\phi m$. In this way, a measurement
is a linear functional in $C^\infty_c(M)$
and define a distribution in the sense of Schwartz,
 $Q_j\in\mathcal{S}^\prime(M)$ (see  H\"ormander \cite{horm1}).
 The questions we are interested
in are:
\begin{itemize}
\item[({\it i})] Do the limits 
\[
\lim_{j\to-\infty}Q_j(\phi)=u_\alpha(\phi)\qquad\text{and}\qquad 
\lim_{j\to +\infty}Q_j(\phi)=u_\om(\phi)
\]
exist for all functions $\phi\in C^\infty_c(M)$?
\item[({\it ii})] If the limits exist, 
then $u_\alpha$ and $u_\om$ define
 distributions in $\mathcal{S}(M)$. What is the regularity
of   $u_\alpha$ and $u_\om$? 
\end{itemize} 

It is easy to see that 
\begin{align*}
\rho_j&= L^j\rho_0 + \sum_{i=1}^j L^i(R). \numberthis  \label{coc}\\
\rho_{-j}&=L^{-j}\rho_0   - \sum_{i=0}^{j-1} L^{-i}(R) 
\end{align*}
Since we are mostly interested in the component associated to $R$, from now on we assume $\rho_0=0$. 
The fixed point equation associated to equation 
 (\ref{coceq}) is
\begin{equation}
L^{-1} \rho-\rho=R,  \label{cohom}
\end{equation}
 The solutions to that are  invariant charge densities
(if $R=0$ then it defines an invariant measure for $f$). 
An integrable function $\rho$ 
is a weak solution to equation (\ref{cohom}) 
if for every test function $\phi\in C^\infty(M)$ the following
identity holds
$$
\int_M\rho \cdot \phi\circ f^{-1} \ dm-\int_M\rho\phi dm=\int_MR\phi dm
$$
This definition extends naturally to distributions.
A distribution $u\in \mathcal{S}^\prime(M)$ is a weak solution to
equation (\ref{cohom}) if 
for every test function $\phi\in C^\infty(M)$ the following
identity holds
\begin{equation}
u(\phi\circ f^{-1})-u(\phi)=\int_MR\phi dm  \label{wcohom}
\end{equation}
Suppose that the limit $u_\omega(\phi)$ in question ({\it i}) 
exists for any $\phi$. Then, from  (\ref{coc})
$$\lim_{j\to+\infty}\int_M L^j(R)\phi \ dm=0.$$
Using the definition of $u_\omega$ a computation shows that 
\[
Q_j(\phi\circ f^{-1})-Q_j(\phi)-\int_MR\phi \  dm=
-\int_M L^j(R) \phi \ dm,
\]
and taking the limit as $j\to\infty$ we conclude  that $u_\om$
is a weak solution of the cohomological equation. The same result
holds for $u_\ap$. Therefore, if  the $\omega$-limit and the 
$\alpha$-limit (in a weak sense) exist, then these limits
are weak solutions of the corresponding cohomological equation.

From now on, we suppose that  $M$ is compact. 
Then  integration of  both sides of equation 
(\ref{coceq}) over $M$  with respect to the volume form 
$\mu$  gives $$Q_{j+1}(1)=Q_j(1)+\int_MR \ dm.$$ Therefore,
 question ({\it i}) may have a positive answer only if 
\begin{equation}
\int_MR \ dm=0,\label{hR}
\end{equation}
which means that the total amount of charge added to 
the system at each time is null. 

The limits in question ({\it i}) do not exist unless the 
dynamics of $f$ is complex enough. For instance, if $f$ is 
the identity map then $u_\alpha$ and $u_\omega$ do not 
exist for any function $R\ne 0$. Suppose that $m$ is invariant
under $f$. Then $J=1$ and (\ref{cohom}) is the classical {\it Livsic cohomological equation}
$$\rho\circ f -\rho = R$$
and  for $\rho_0=0$
\begin{align*}
\rho_j&= \sum_{i=1}^j  R\circ f^{-j}. \\
\rho_{-j}&=  - \sum_{i=0}^{j-1} R\circ f^j 
\end{align*}

 If $u_\omega$ exists for every  function $R \in \cinf(M)$ satisfying $\int_M R \ dm=0$ then $$\lim_{j\to\infty}\int_M(R\circ f^{-j})\phi \ dm=0$$ for
all functions $R$ and $\phi$ in $\cinf(M)$. This implies  the decay of
 correlations for any pair of functions in $\cinf(M)$, 
which is equivalent to $(f,m)$ to be  mixing. 
Therefore, if $m$ is invariant
under $f$, the limits $u_\ap$ and $u_\om$ exists for any given 
function $R$ only if $(f,m)$ is mixing (this seems to be
a natural physical condition for the existence of an equilibrium
once we have neglected molecular diffusion). On the other hand,
if $(f,\mu)$ is mixing and the decay of correlations is fast
enough so that
\[
\sum_{j\in \mathbb{Z}} \left|\int_M(R\circ f^j)\phi \ dm\right|<\infty
\] 
for any functions $R$ and $\phi$ in $\cinf(M)$ with
$\int_M R\ dm=0$, then the limits $u_\omega$ and $u_\alpha$ 
 exist. Indeed we have that 
 \begin{align*}
u_\alpha &= \sum_{i=1}^{+\infty}   R\circ f^{-j}. \\
u_\omega&=  - \sum_{i=0}^{+\infty} R\circ f^j.
\end{align*}
{\it as distributions}.  

\section{Main results}

\subsection{Dynamics with exponential decay of correlations} Our first  result gives weaker regularity results than the ones we obtain later for hyperbolic linear maps  on the torus and piecewise expanding maps on the interval. However, it is remarkable  that its  only assumption is exponential decay of correlations for H\"older observables, which  has been proved for a wide variety of dynamical systems.

\begin{mainthm}
\label{main} Let $M$ be a $C^r$ compact manifold, with $r\geq 1$, $f\colon M \rightarrow M$ be a measurable function with measurable inverse, and $\mu$ be a smooth volume form invariant under
$f$. Suppose that for a $\mu$-integrable  function  $R\in L^\infty(M)$ 
satisfying  $\int_MR \ d\mu=0$ and for  
$\phi\in C^r(M)$ the exponential decay of correlations 
\begin{equation}
\left|\int_MR(f^j(x))\phi(x)\ d\mu(x)\right|\le \Cll{c1} \erm^{-\Cll{c2}|j|}
|\phi|_{{\rm C}^\gm}, \quad j\in\Z \label{dec1}
\end{equation}
holds, where $\Crr{c1}>0$ and $\Crr{c2}>0$ are constants that  depend neither 
on $j$ nor on  $\phi$ and $|\phi|_{{\rm C}^\gm}$, with $\gm>0$, is the 
usual H\"older norm of $\phi$. Then the Birkhoff sums $u_\alpha$ and 
$u_\omega$ given by 
\[
u_\om=\sum_{-\infty}^{j=-1}
R(f^{j}x)\quad\text{and}\quad
u_\ap=-\sum_{j=0}^\infty R(f^{j}x).
\]
belong to the logarithm Besov space $B^{0,-1}_{\infty,\infty}$.
Moreover they are weak solutions  of  the cohomological 
equation $$R=u\circ f-u.$$
\end{mainthm}

Logarithmic Besov spaces  $B^{s,b}_{p,q}$ are a generalization classical Besov spaces (see Section \ref{defsec}).

\begin{rmk} If $f$ is not invertible, we can obtain a similar result for $u_\ap$ assuming (\ref{dec1}) for $j\geq 0$.
\end{rmk}

A volume-preserving linear Anosov  on the  $n$-dimensional torus  $ \Tm^n=\R^n/\Z^n$ is a  $C^\infty$-diffeomorphism   $f\colon \Tm^n \rightarrow \Tm^n$ defined by  $fx=Mx$, 
where $M$ is a hyperbolic $n\times n$-unimodular matrix. The most famous example is the Arnold's Cat map, an Anosov map  obtained taking 
$$M=\left(\begin{array}{cc}
2&1\\1&1
\end{array}\right).
$$

\subsection{Hyperbolic Linear maps on the Torus} 

For hyperbolic linear maps  we  can use Fourier analysis methods to obtain 

\begin{mainthm} \label{sigma} For every $R\in C^\beta(\mathbb{T}^n)$, with $\beta > n/2$, such that 
$$\int R \ dm=0,$$
where $m$ is the Haar measure of  $\mathbb{T}^n$. Consider the Birkhoff sums
\begin{align*}
u_\ap &=-\sum_{j=0}^\infty R\circ f^j
\end{align*} 
and
\begin{align*}
u_\om &=\sum^{\infty}_{j=1}
R\circ  f^{-j}.
\end{align*}
Then  $u_\ap$ and $u_\om$ are well-defined as  distributions and $u_\om, u_\ap\in \Lambda^0$, where $\Lambda^0$ is a Zygmund space.  Moreover they are both weak solutions of the cohomological equation
\begin{equation}\label{ce56} R=u\circ f-u.\end{equation} 
\end{mainthm} 

Zygmund spaces $\Lambda^s$ are introduced  in Section \ref{defsec}.

\subsection{Piecewise expanding maps: Primitives of Birkhoff  sums}\label{main1d}

Here we give a fairly complete  picture of the regularity of Birkhoff sums for piecewise expanding one-dimensional maps. One of the advantages of the one-dimensional setting is that one can easily define the {\it primitive} of a Birkhoff sum. Let $I=[a,b]$ and  $f$ be a $C^{1+BV}$ piecewise expanding map on $I$. We are going to see that if $\phi\in L^\infty(m)$ is orthogonal to the densities of all absolutely continuous invariant  probability measures of  $f$ then 

$$\psi(x)=   \int  1_{[a,x]}\cdot \Big( \sum_{k=0}^\infty  \sum_{j=0}^{p-1} \phi\circ f^{kp+j} \Big)\ dm$$
is a well-defined function (for an appropriated $p$ that depends only on $f$) and 
$$\psi'= \sum_{k=0}^\infty  \sum_{j=0}^{p-1} \phi\circ f^{kp+j} $$
in the sense of distributions. Here $BV$ is the space of bounded variation functions in $[a,b]$. This allows us study the regularity of Bikhoff's sums in a far more efective way.  Here $p$ is related with the ergodic decomposition of the absolutely continuous invariant probabilities of  $f$. 

There is a finite number of ergodic absolutely continuous $f$-invariant probabilities  $\mu_\ell$ and densities $\rho_\ell$,  whose (pairwise disjoint) basins of attractions 
$$A_\ell=\{x\in I \ s.t. \ \lim_{N\rightarrow \infty}   \frac{1}{N}  \sum_{k< N}  \theta\circ f^k(x)= \int \theta \ d\mu_\ell, \text{ for every } \theta\in C^0(I)    \}.$$
 covers $m$-almost every point in $I$. Let $$S_\ell= \{x\in I \ s.t. \ \rho_\ell(x) > 0\}\subset A_\ell$$ By Boyarsky and  G\'{o}ra\cite{bg} the set $S_\ell$ is a finite union of intervals up to a zero $m$-measure set. Indeed
 $$A_\ell = \cup_{n=0}^\infty f^{-i}S_\ell$$
 up to  set of zero Lebesgue measure. Define
 $$\Phi_1\colon L^1(m)\rightarrow BV$$
 by 
 $$\Phi_1(\gamma)= \sum_{\ell} \Big( \int_{A_\ell} \gamma \ dm\Big)  \rho_\ell.$$

\begin{mainthm}[Log-Lipchitz continuity]  Let $f\colon I \rightarrow I$ be a piecewise $C^{1+BV}$ expanding map on the interval $I=[a,b]$.  There is  $p\in \mathbb{N}^\star$ such that for every function  $\phi \in L^\infty(I)$ satisfying 
$$ \int \phi \Phi_1(\gamma)  dm=0$$
for every $\gamma \in BV$ we have  $\psi$ is Log-Lipchitz continuous. 
\end{mainthm} 

See Theorem \ref{vvv}  for details. We can ask if the Log-Lipchitz regularity is sharp. Indeed given an ergodic absolutely continuous probability $\mu$ and complex valued functions $\phi_1$ and $\phi_2$  such that 
$$\int \phi_i \ d\nu=0, \ i=1,2$$
we  define 
$$\sigma_\nu(\phi_1,\phi_2)=  \lim_{N \rightarrow \infty}  \int \Big( \frac{\sum_{i=0}^{N-1} \phi_1\circ f^i }{\sqrt{N}}   \Big)\Big( \frac{\sum_{i=0}^{N-1} \overline{\phi}_2 \circ f^i }{\sqrt{N}}   \Big)   \ d\nu$$
whenever this limit exists, and 
$$\sigma^2_\nu(\phi)=\sigma_\nu(\phi,\phi).$$
Note that $\nu$ does not need to be $f$-invariant. 

\begin{mainthm}    Let $f$ be a piecewise $C^{1+BV}$  expanding maps on the interval $I=[0,1]$ and let $\phi$ be a piecewise $C^\beta$ function on  $I$, with $\beta\in (0,1)$, such that 
$$ \int \phi \Phi_1(\gamma)  dm=0$$
for every $\gamma \in BV$.    Then the variance $\sigma_{\mu}(\phi)$ is well-defined and finite  for the Lebesgue measure $m$ on $I$  and for every ergodic absolutely  continuous  $f$-invariant probability $\mu$. Indeed $f$ has only a finite number  of absolutely continuous ergodic $f$-invariant probabilities  $\{\mu_\ell\}_\ell$ and 
$$\sigma_{m}^2 (\phi)= \sum_\ell c_\ell  \sigma_{\mu_\ell}^2(\phi),  $$
where $c_\ell > 0$ and $\sum_\ell c_\ell =1$. If $\sigma_{m}^2 (\phi)=0$ then $\psi$ is a absolutely continuous function and its derivative belongs to $L^2(m)$. 
\end{mainthm} 

See Theorem \ref{cohomo} for details.  For $\sigma_{m}^2 (\phi)> 0$ the regularity of $\psi$ is quite bad.

\begin{mainthm}[Central Limit Theorem for the modulus of continuity]   Let $f$ be a piecewise $C^{2+\beta}$  expanding map on the interval $I=[0,1]$ and let $\phi$ be a piecewise  $C^{\beta}$ function on $I$, with $\beta\in (0,1)$, such that 
$$ \int \phi \Phi_1(\gamma)  dm=0$$
for every $\gamma \in BV$.  Then the variance $\sigma_{\mu}(\phi)$ is well-defined  with respect to every ergodic absolutely  continuous  $f$-invariant probability $\mu$. If $\sigma_{\mu}(\phi) > 0$ then 
 $$\lim_{h\rightarrow  0}  \mu\{ x\in I \colon  \frac{1}{  \sigma_{\mu}(\phi) L \sqrt{-\log |h|}}   \Big( \frac{\psi(x+h)-\psi(x)}{h}\Big) \leq y   \} =\frac{1}{2\pi} \int_{-\infty}^y e^{-x^2} \ dx.   $$
Here
$$L = \Big(\int |Df| \ d\mu\Big)^{-1/2}.$$
In particular $\psi$ is not a Lipschitz function of any measurable subset of positive measure in the support of $\mu$. In particular $\psi$ does not have bounded variation  on the support of $\mu$.
\end{mainthm} 

 See Theorem \ref{cite} for the precise statement. One can ask if $\psi$ is in general a Zygmund function (all Zygmund functions are Log-Lipchitz   continuous). That is not true. See Section \ref{zyg}. 
 
\subsection{Invariant distributions of piecewise expanding maps} \label{maininv} Our results have applications in the study of the  nature of invariant {\it distributions} of a piecewise expanding map. Invariant  finite measures are an obvious example. However there is much more. 

\begin{mainthm} Let $f$ be a piecewise $C^{2+\beta}$  expanding map on the interval $I=[0,1]$, with $\beta \in (0,1)$.  Choose  $\phi \in \mathcal{B}^\beta(C)\cap BV$ such that 
$$\int \phi  d\nu=0$$
for every absolutely continuous $f$-invariant ergodic measure $\nu$. Consider the distribution $\Theta_\phi\in BV^\star$ given by
$$\Theta_\phi(g)=\sigma_m(g,\overline{\phi}).$$
Then $\Theta_\phi$ is $f$-invariant. Moreover $\Theta_\phi$ is a signed measure if and only if  \ $\Theta_\phi=0$. In particular if  $\sigma_m(\phi)> 0$ then $\Theta_\phi$ is  not a signed measure.
\end{mainthm} 

See Theorem \ref{xcxc} for the precise statements.

\section{Regularity under exponential decay of correlations}

\label{expdec}

\subsection{Zygmund and Logarithm Besov spaces} \label{fou}

\label{defsec}

This section contains a series of definitions and results
concerning the regularity properties of functions and 
distributions (in the sense of Schwartz) that we will use in the proof of Theorem \ref{main}.

Let $\calS$ be the Schwartz space of complex-valued 
rapidly decreasing infinitely differentiable functions on $\R^n$
and $\calS^\prime$ the space of continuous linear forms 
 on $\calS$ (temperate distributions). For $\phi\in\calS$ let  $\mathcal{F}(\phi)$ and $\mathcal{F}^{-1}(\phi)$ be  the Fourier transform and its inverse (see  H\"ormander \cite{horm1})

 The Fourier transform
of  $u\in\calS^\prime$, denoted as $\calF u=\hat u$, 
 is defined by $\hat u(\phi)=u(\hat\phi)$. 
The Fourier transform is an isomorphism of $\calS^\prime$ 
(with the weak topology) with inverse given by 
$\calF^{-1}\hat u(\phi)=\hat u(\calF^{-1}\phi)$. If $u\in\calS^\prime$
has compact support then  
$u$ can be extended to the class  of complex-valued 
infinitely differentiable 
functions, denoted as $\cinf$,  
and $\hat u(\xi)=u_x(exp(-ix\cdot\xi))\in\cinf$, where
$u_x$ denotes that $u$ acts on the variable $x$.

Let $\cinf_c(\mathbb{R}^n)$ denote the space of functions in $\cinf(\mathbb{R}^n)$ 
with compact support. Let $\psi_0\in\cinf_c$ be a function
that is radial, non incresing along rays, and   
such that: $\psi_0(x)=1$ for $|x|<1$,
$\psi_0(x)=0$ for $|x|>2$. We define 
$\psi(x)=\psi_0(x)-\psi_0(2x)$ and note that $0\le\psi(x)\le 1$ 
with $\psi(x)=0$
for $|x|<1/2$ and $|x|>2$.
We define $\psi_\ell(x)=\psi(x/2^\ell)$, $\ell\in \mathbb{N}^\star$.  Note that 
\begin{equation}
\text{supp}(\psi_\ell)\subset\{x:2^{\ell-1}\le |x|\le 2^{\ell+1}\}
\label{supp}
\end{equation}
 and 
$\sum_0^N\psi_\ell(x)=\psi_0(x/2^N)\to 1$ as $N\to\infty$, which implies
that the set 
$\{\psi_0,\psi_1,\ldots\}$ yields a partition of unit.
If $u\in\calS^\prime$ then $\psi_\ell(D)u$
 is defined by 
\begin{equation}
\psi_\ell(D)u(x)=\calF^{-1}(\psi_\ell \hat u).\label{psil}
\end{equation}
 Since $\psi_\ell$
has compact support, $\psi_\ell(D)u\in\cinf(\mathbb{R}^n)$. 
For any $s\in\R$ we define the Zygmund class $\Lb^s$ as the 
set of all $u\in\calS^\prime$ with the norm
\[
|u|_s=\sup_{\ell\ge 0}2^{\ell s}\sup|\psi_\ell(D)u|<\infty
\]
The Zygmund class has the following properties (see H\"ormander \cite[Section 8.6]{hormNLH} and  Triebel \cite{triebel}):
\begin{itemize}
\item[(i)] If $s>0$ is not an integer then $u\in\Lb^s$ if, 
and only if,
$u$ is 
a H\"older function with exponent $s$.
\item[(ii)] The Zygmund class $\Lb^1$ consists of all bounded
continuous functions such that 

\[
\sup|u(x)|+\sup_{y\ne 0}\left|\frac{u(x+y)+u(x-y)-2u(x)}{y}\right|<
\infty
\]
and the norm $|u|_1$ is equivalent to the left-hand side. 
There exists analogous characterizations of $\Lb^s$,  
for $s>0$ integer. 
\item[(iii)] If $u\in\Lb^s$ then $\partial_{x_j}u\in \Lb^{s-1}$ and, 
conversely, $u\in\Lb^s$ if $\partial_{x_j}u\in \Lb^{s-1}$, 
$j=1,\ldots,n$.
\item[(iv)] If $u$ is bounded and continuous then $u\in\Lb^0$.
\end{itemize}

Another class of spaces to be considered in this paper is a 
modification of the Zygmund class, the {\it logarithm Besov spaces $B^{s,b}_{\infty,\infty}$}. 
For any $s\in\R$, let $B^{s,b}_{\infty,\infty}$ be 
set of all $u\in\calS^\prime$ such that 
\begin{equation}
\sup_{\ell\ge 0}2^{\ell s}(1+\ell)^b\sup|\psi_\ell(D)u|<\infty
\label{logzyg}
\end{equation}
This  definition of logarithm Besov spaces  can be found in  Cobos,  Dom\'inguez, and   Triebel \cite[Eq. (4.1)]{cobos2016}. Beware that there is another definition of  (sometimes distinct)  logarithm Besov spaces  for $s\geq 0$ in this same reference, using  the modulus of continuity instead of the Fourier transform approach.

 In order to define $\Lambda^s$ and $B^{s,b}_{\infty,\infty}$  on compact  $\cinf$ 
manifolds, it is necessary to consider test functions  
with support in a coordinate domain. A partition of unity can 
be used to decompose test functions with compact support 
into functions with support in coordinate domains. However, in the particular case of  the  $n$-dimensional torus $\Tm^n=\R^n/\Z^n$, it is more convenient to characterize those spaces of distributions using Fourier series instead. 

 The functions and 
distributions on $\Tm^n$ lift to periodic functions and 
periodic distributions
on $\R^n$ (a distribution $u\in\calS^\prime$ is periodic if 
  $u(\phi)=u_x(\phi(x+k))$ for
every $\phi\in\calS$ and $k\in\Z^n$).
 It is convenient to rewrite the definition of $\Lb^s$ 
for periodic distributions (see H\"ormander \cite[Section 7.2]{horm1}).
Let $\Gm\in\cinf_c(\R^n)$ be such that $\sum_{k\in\Z^n}\Gm(x+k)=1$.
It can be shown that any periodic $u\in\calS^\prime$ can 
be written as 
\[
u=\sum_{k\in\Z^n}c_k\erm^{2\pi ix\cdot k}\quad\text{where}\quad
c_k=u_x(\Gm(x)\erm^{-2\pi ix\cdot k}).
\]


\subsection{Proof of  Theorem \ref{main}}

Let $f:M\to M$ be a $\cinf$ diffeomorphism and $\mu$ a smooth 
volume form preserved under $f$. 
Suppose that $M$ is compact
so that  it can be covered 
by a finite number of coordinate patches $U_1,U_2,\ldots$. The
coordinates $x$ on each $U_i$ can be chosen such that 
$\mu=dx_1\wedge dx_2\ldots\wedge dx_n=dx$.
Let $\chi_1,\chi_2\ldots$ be a partition of unit such that 
${\rm supp}\  \chi_i\subset U_i$. If $\phi\in\cinf(M)$ then
$\phi=\phi_1+\phi_2\ldots$ where $\phi_i=\chi_i\phi$ has its support
in $U_i$. If $u$ is a distribution on $M$ then 
$u(\phi)=u(\sum_i\chi_i\phi)=\sum_i  u(\chi_i \phi)$. So, $u$ can be 
decomposed into a sum of distributions $u_i=\chi_i u$, $i=1,2,\ldots$,
such that ${\rm supp}\, u_i\subset U_i$. We say that $u$ 
belongs to some class of regularity (for instance $\in\Lb^s$) if
$u_i$ belong to this class for all $i$. Since 
${\rm supp}\,  u_i\subset U_i\subset\R^n$ 
the analysis of the regularity
of $u_i$ can be made using the tools presented in section 
\ref{defsec}. Since there are finitely many coordinate patches
and the analysis of regularity is similar  in all of them 
we just choose a particular one and neglect the index $i$
associated to it.    

The $u_\ap$ and $u_\om$ distributions restricted to 
a particular coordinate patch are given by
\[
u_\om=\chi(x)\sum^{+\infty}_{j=1}
R(f^{-j}x)\quad\text{and}\quad
u_\ap=-\chi(x)\sum_{j=0}^{+\infty} R(f^{j}x).
\]
The analysis of  the regularity of $u_\ap$ and $u_\om$ are similar, 
so we only consider $u_\ap$. 
Let $\Psi_\ell=\calF^{-1}\psi_\ell\in\calS$. 
Using that  $\calF(\Psi_\ell\ast(\chi u_\ap))=\psi_\ell\calF(\chi u_\ap)$, 
where $\ast$ denotes the convolution,
we obtain that 
\begin{eqnarray*}
\psi_\ell(D)(\chi u_\ap)(x)&=&\calF^{-1}[\psi_\ell(\xi)\calF(\chi u_\ap)](x)=
\Psi_\ell\ast(\chi u_\ap)(x)\\
&=&u_{\ap y}(\Psi_\ell(x-y)\chi(y))=
\sum_{j=0}^\infty\int_{\mathbb{R}^n} \Psi_\ell(x-y)\chi(y)R(f^{j}y)dy.
\end{eqnarray*}
For $\ell=0$, the decay of correlations (\ref{dec1}) and 
the uniform boundness of 
$$|\Psi_0(x-\cdot)\chi(\cdot)|_{{\rm C}^\gm}$$ with respect to $x$
 imply that $\sup|\psi_0(D)(\chi u_\ap)(x)|<\infty$.
For $\ell\ge 1$ we claim that there exists a constant $\Cll{c3}>0$ such that 
\[
\sup|\psi_\ell(D)(\chi u_\ap)(x)|\le\Crr{c3}(\ell+1).
\]

\noindent Indeed, the definition of $\psi_\ell$ for $\ell\ge 1$ implies
\[
\Psi_\ell(x)=2^{\ell n}\Psi(2^{\ell}x)\quad\text{where}\quad
\hat\Psi=\psi
\]
This implies that  
$|\Psi_\ell(x-\cdot)\chi(\cdot)|_{{\rm C}^\gm}\le \Cll{c4}2^{\ell(\gm+n)}$
where $\Crr{c4}>0$ does not depend on $\ell$. So the decay of 
correlations (\ref{dec1}) implies
\[
\left|\int_{\mathbb{R}^n} \Psi_\ell(x-y)\chi(y)R(f^{j}y)dy\right|\le
\Crr{c1}\Crr{c4}\exp(-\Crr{c2}j+\ell(\gm+n)\ln 2)
\]
If $\Cll{c5}=(\gm+n)\ln 2/\Crr{c2}$ then 
\[
\left|\sum_{j\ge \ell \Crr{c5}}\int_{\mathbb{R}^n} \Psi_\ell(x-y)\chi(y)R(f^{j}y)dy
\right|\le \frac{\Crr{c1}\Crr{c4}}{1-\erm^{-\Crr{c2}}}=\Cll{c6}
\]
It remains to estimate 
\begin{align*}
&\left|\sum_{j\le \ell \Crr{c5}}\int_{\mathbb{R}^n}
2^{\ell n}\Psi(2^{\ell}(x-y))\chi(y)R(f^{j}y)dy\right|\\
&=
\left|\sum_{j\le \ell \Crr{c5}}\int_{\mathbb{R}^n}
\Psi(z)\chi(x-2^{-\ell}z)R(f^{j}(x-2^{-\ell}z))dz\right|\\
&\le
|R|_{L^\infty(M)} \sum_{j\le \ell \Crr{c5}}\int_{\mathbb{R}^n}
|\Psi(z)|dz\le \Crr{c7} \ell
\end{align*}
where $\Cll{c7}>0$ does not depend on  $\ell$.
Therefore $$\sup|\psi_\ell(D)(\chi u_\ap)(x)|\le \Crr{c7}\ell+\Crr{c6}\le \Crr{c3}(\ell+1).$$

This completes the proof of the claim.  Consequently
\begin{equation}
\sup_{\ell\ge 0} (1+\ell)^{-1}\sup|\psi_\ell(D)u_\ap|<\infty
\end{equation}
so $u_\ap\in B^{0,-1}_{\infty,\infty}.$


\section{Hyperbolic Linear maps on the Torus}

\label{T2}

In order study  its  regularity 
we use the following.
\begin{proposition} \label{L} For every invertible $n\times n$ hyperbolic matrix $A$ there is $L > 0$ such that
for every  $\ell \in \mathbb{Z}$ and $p\in \mathbb{R}^n$   we have that
\[
2^{\ell}\le|A^j p|\le 2^{\ell+1}
\]
is verified for at most $L$ values of $j\in\Z$.
\end{proposition}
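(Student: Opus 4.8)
The plan is to exploit hyperbolicity to split $\R^n$ into stable and unstable directions and show that along each $A$-orbit the norm $|A^j p|$ grows or decays at a definite exponential rate, so it can only spend boundedly many steps inside any fixed dyadic annulus. Concretely, let $A = A_s \oplus A_u$ (after passing to the generalized eigenspace decomposition $\R^n = E^s \oplus E^u$ corresponding to eigenvalues of modulus $<1$ and $>1$; there is no neutral part since $A$ is hyperbolic). Choose an adapted norm $\|\cdot\|$ on $\R^n$ — for instance $\|v\| = \max(\|v_s\|_s, \|v_u\|_u)$ where $\|\cdot\|_s,\|\cdot\|_u$ are Lyapunov norms making $A_s$ a strict contraction and $A_u^{-1}$ a strict contraction — so that there are constants $0 < \mu < 1 < \lambda$ with $\|A_s v_s\|_s \le \mu \|v_s\|_s$ and $\|A_u v_u\|_u \ge \lambda \|v_u\|_u$ for all $v_s \in E^s$, $v_u \in E^u$. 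Since all norms on $\R^n$ are equivalent, it suffices to prove the annulus bound for this adapted norm (replacing $L$ by a constant multiple and adjusting the exponents), and the statement for the Euclidean norm follows.

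First I would handle the two components of $p = p_s + p_u$ separately. For $j \ge 0$, $\|A^j p_u\|_u \ge \lambda^j \|p_u\|_u$ grows monotonically and geometrically, while $\|A^j p_s\|_s \le \mu^j \|p_s\|_s$ decays; for $j \le 0$ the roles reverse. The key observation is that for the adapted norm, $g(j) := \max(\|A^j p_s\|_s, \|A^j p_u\|_u)$ is a ``V-shaped'' function of $j$: it is (weakly) decreasing then (weakly) increasing as $j$ ranges over $\Z$, because on the range of $j$ where the unstable part dominates, $g$ is eventually governed by $\|A^j p_u\|_u$ which is strictly increasing, and symmetrically for the stable part as $j \to -\infty$. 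More precisely, once $j$ is large enough that the unstable component dominates, each further step multiplies $g$ by at least $\lambda$; once $j$ is negative enough that the stable component dominates, each step to the left multiplies $g$ by at least $1/\mu > 1$. Hence outside a bounded middle window, $g$ increases by a fixed factor $\ge \min(\lambda, 1/\mu) > 1$ at every step.

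Now fix $\ell \in \Z$ and consider the set $J = \{ j \in \Z : 2^\ell \le \|A^j p\| \le 2^{\ell+1}\}$ (equivalently, within a bounded factor, $2^\ell \lesssim g(j) \lesssim 2^{\ell+1}$). Split $J$ into $J^+ = J \cap \{j : \text{unstable part dominates and } g \text{ increasing}\}$, $J^- = J \cap \{j : \text{stable part dominates and } g \text{ decreasing}\}$, and a bounded middle set $J^0$ of indices where the transition happens, whose cardinality is bounded by a universal constant depending only on $\mu, \lambda$ and the norm-equivalence constants. On $J^+$, consecutive elements $j < j'$ satisfy $g(j')/g(j) \ge \lambda^{j'-j} \ge \lambda$, and since the ratio of the largest to smallest value of $g$ on $J$ is at most $2^{\ell+1}/2^{\ell} \times (\text{equivalence constant})^2 =: K$, we get $|J^+| \le 1 + \log_\lambda K$. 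The same bound holds for $|J^-|$ with $1/\mu$ in place of $\lambda$. Therefore $|J| \le |J^+| + |J^-| + |J^0| \le L$ for a constant $L = L(A)$ independent of $\ell$ and $p$, which is exactly the claim.

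The main obstacle — really the only subtle point — is making precise the claim that $g(j)$ is essentially V-shaped and identifying the bounded ``middle window''; the danger is that the stable and unstable parts could interleave in some complicated way. The clean fix is to use the adapted (Lyapunov) norm from the start: with $\|v\| = \max(\|v_s\|_s, \|v_u\|_u)$ one has exactly $\|A^j p\| = \max(\mu\text{-decaying quantity}, \lambda\text{-growing quantity})$ for $j \ge 0$ and symmetrically for $j \le 0$, so the maximum of a decreasing and an increasing positive sequence is genuinely V-shaped (unimodal downward), the flat bottom has length bounded by $\log_\lambda(1/\mu)^{-1}$-type estimates, and the counting argument above goes through with no interleaving issue. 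Everything else is routine: norm equivalence, geometric series, and the elementary observation that a sequence increasing by a fixed factor $>1$ at each step can land in a fixed-ratio window only boundedly often.
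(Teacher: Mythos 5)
Your proof is correct and follows essentially the same route as the paper's: both pass to an adapted norm compatible with the stable/unstable splitting $\R^n=E^s\oplus E^u$ and exploit the definite exponential rates $\mu<1<\lambda$ of the two components to show that an orbit can spend only a bounded number of iterates inside any fixed dyadic annulus. The only difference is bookkeeping: the paper uses the additive adapted norm $\|x_u\|+\|x_s\|$, a cone dichotomy $C^s\cup C^u=\R^n$, and a subsample $A^{k_0}$ tuned so that the norm triples once a cone is entered (giving at most two hits per residue class mod $k_0$), whereas you use the max-norm and the resulting unimodality of $j\mapsto\|A^jp\|$ directly; with the max-norm the ``transition'' is in fact a single crossover index (no flat bottom to worry about, since $a(j)=\|A_s^jp_s\|_s$ is strictly decreasing and $b(j)=\|A_u^jp_u\|_u$ strictly increasing), so your middle set $J^0$ is trivially bounded and the counting goes through as you describe.
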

\begin{proof}
Let $E^s$ and $E^u$ be the stable and unstable spaces of $A$. Consider an adapted norm $||\cdot||$ and  $\theta > 1$    such that for every $x\in \mathbb{R}^n$, if we denote $x=x_s+x_u$, with $x_s\in E^s$ and $x_u\in E^u$, we have
\begin{align*}
&||x||=||x_u+x_s|| = ||x_u||+ ||x_s||, \text{ for every $u\in E^u$ and $x_s\in E^s$},\\
&||Ax_u|| \geq \theta ||x_u||, \text{ for every $x_u\in E^u$} ,\\
&||A^{-1}x_s||\geq  \theta ||x_s||, \text{ for every $x_s\in E^s$}.
\end{align*} 
The linearity of $A$ implies that 
it is enough to show that there is $k_0\geq 1$ such that  for every $p\in \mathbb{R}^n$
\begin{equation} \label{disc2}
1\leq || A^{k_0j}  p||\leq 2
\end{equation}
is verified for at most two values $j\in\Z$.

Define 
\begin{align*}
&C^s=\{ x  \ s.t. \   ||x_u||\leq ||x_s||     \},\\
&C^u=\{ x  \ s.t. \   ||x_s||\leq ||x_u||     \}.
\end{align*}
Of course $\mathbb{R}^n= C^s\cup C^u$. Let $k_0$ be such that 
$$ \theta^{k_0} -  \theta^{-k_0}> 6.$$
If $x\in C^u$  then $A^{k_0}x\in C^u$ and 
\begin{align*} 
||A^{k_0}x||&\geq ||A^{k_0}x_u||-  ||A^{k_0}x_s||\geq \theta^{k_0} ||x_u||- \theta^{-k_0}||x_s|| \\
&\geq \Big(  \theta^{k_0} - \theta^{-k_0}\Big) ||x_u||\geq  \Big(  \theta^k - \theta^{-k}\Big) \frac{||x||}{2}  \\
& \geq   3||x||.\label{geq2} \numberthis
\end{align*} 
The $A^{k_0}$-forward invariance of $C^u$ and (\ref{geq2}) implies that
$$\{ j\in \mathbb{Z}\colon \  A^{jk_0}p\in C^u \ and \  1\leq || A^{jk_0}  p||\leq 2     \}$$
contains at most one integer.  By an analogous argument If $x\in C^s$  then $A^{-k_0}x\in C^s$ and 
\begin{equation*} 
||A^{-k_0}x|| \geq  3||x||.
\end{equation*}  
so
$$\{ j\in \mathbb{Z}\colon \  A^{jk_0}p\in C^s \ and \  1\leq || A^{jk_0}  p||\leq 2     \}$$
contains at most one integer.  This completes the proof. 
\end{proof}

\begin{proof}[Proof of Theorem \ref{sigma}] The distributions we are interested in are
of the form $$u(\phi)=\lim_{j\to \pm \infty}Q_j(\phi),$$ where $Q_j(\phi)$ is as in (\ref{Qk}), taking $\rho_0=0$ in (\ref{coc}). We will prove the theorem for  $u_\alpha$. The proof of the regularity of $u_\omega$ is analogous.
Using the notation of Section \ref{fou} 
\[
u_\alpha(\phi)= \sum_{k\in\Z^n}c_k\int \erm^{2\pi ix\cdot k}\phi(x)dx
\]
for every $\phi\in\calS$, where
\begin{equation}
c_k=\lim_{ j\to -\infty}
\int_{\R^n}\rho_j(x)\Gm(x)\erm^{-2\pi ix\cdot k}\ dm(x)=
\lim_{j\to -\infty}
\int_{\Tm^n}\rho_j(x)\erm^{-2\pi ix\cdot k} \ dm(x),
\label{ck}
\end{equation}
where $\rho_j$ is given in equation (\ref{coc}).

Let $\dt_z$ be the $\delta$-Dirac distribution  with 
support at the point $z\in\R^n$.
Using that $\hat\dt_z(\xi)=\erm^{- i z\cdot\xi}$
 and $\calF\erm^{ i x\cdot\xi}=(2\pi)^n\dt_\xi$
we obtain 
the Fourier transform of $u$:
\[
\hat u=(2\pi)^n\sum_{k\in\Z^n}c_k\dt_{2\pi k}.   
\]
This and  (\ref{psil}) imply 
\begin{equation}
u_\ell(x)=\psi_\ell(D)u(x)=\calF^{-1}(\psi_\ell(\xi)\hat u)=
\sum_{k\in\Z^n}c_k\psi_\ell(2\pi k)\erm^{-2\pi ix\cdot k},\label{uj}
\end{equation}
and 
\[
|u|_s=\sup_{\ell\ge 0}2^{s\ell}\sup|u_\ell(x)|
\]
We provide the proof for $u_\ap$. 
Let $R\in \Lambda^\beta$.
We have $$R(x)=\sum_{p\in \mathbb{Z}^n} b_p\exp(2\pi i p\cdot x).$$
Denote 
\begin{align*} 
u_{\ap,p}(x)&=-\sum_{j=0}^\infty\exp(2\pi i p\cdot f^{j}x)=
-\sum_{j=0}^\infty\exp(2\pi i x\cdot (M^\star)^jp).
\end{align*} 
Consequenlty
$$u_\ap= \sum_{p\in \mathbb{Z}^n} b_p u_{\ap,p}.$$
From  (\ref{ck}) 
\[
c_k=-\sum_{j=0}^\infty\sum_{p\in\Z^n} b_p\int_{\Tm^n}
\erm^{-2\pi ix\cdot ((M^\star)^jp-k)}dx=
-\sum_{p\in\Z^n}\sum_{j=0}^\infty b_p\dt_{k,(M^\star)^j p}
\]
for every $k\in \mathbb{Z}^n$, where $\dt_{i,j}=1$ if $i=j$, otherwise 
$\dt_{i,j}=0$. 
From (\ref{uj}) and Proposition \ref{L}
\begin{eqnarray*}
|u_{\ell}(x)|&=&\left|
\sum_{k\in\Z^n}\sum_{p\in\Z^n}\sum_{j=0}^\infty 
b_p\dt_{k,(M^\star)^j p}
\psi_\ell(2\pi k)\erm^{-2\pi ix\cdot k}\right|\\
&=&
\left|
\sum_{p\in\Z^n}b_p\sum_{j=0}^\infty 
\psi_\ell(2\pi(M^\star)^j p)\erm^{-2\pi ix\cdot (M^\star)^j p}\right|
\le L \sum_{p\in\Z^n}|b_p|.
\end{eqnarray*}
Note that  $\sum_{p\in\Z^n}|b_p|$ converges because $R\in{\rm C}^\beta$ with $\beta>n/2$ (see for instance Grafakos \cite[Theorem 3.2.16] {grafakos}). 
\end{proof}

\begin{rmk} 
Notice that 
\[
u=u_\omega-u_\alpha= \sum_{j\in\Z}\exp(2\pi i p\cdot f^{j}x)\in\Lb^0
\]
is a weak solution of the  equation
$u\circ f-u=0$, that is, $u$ is a $f$-invariant distribution.
\end{rmk}

\subsection{Regularity on invariant foliations} Note that typically $u_\alpha$ and $u_\beta$ {\it  are  not } functions. Indeed it is well known  since Liv\v{s}ic \cite{livsic} that for a residual subset of  functions $R\in C^\alpha$ the cohomological equation $R=u\circ f - u$ do not have a continuous solution $u$ when $f$ is a  Anosov diffeomorphism. However,  the  distributions  $u_\om$ and $u_\ap$ 
 have directional
derivatives with  different  regularity properties. Let $E^s$ and $E^u$ be the stable and unstable directions of $M$. 
The weak derivative
of $u_\ap$ with respect to $s\in E^s$ is
\begin{align*}
D_s u_\ap(\phi)&=-u_\ap(D_s \phi)=
\lim_{k\rightarrow +\infty}  - \int_{\mathbb{T}^n} \sum_{j=0}^k R \circ f^j \cdot D_s \phi \ dm\\
&= \lim_{k\rightarrow +\infty}  -    \int_{\mathbb{T}^n} \sum_{j=0}^k DR(f^j(x))\cdot Df^j(x)\cdot s\   \phi(x) \ dm(x)
\end{align*} 
If $\lambda \in (0,1)$ satisfies $|Df^j(x)\cdot s|\leq C\lambda^j  |s|$, for every $s\in E^s$ then
$$
| DR(f^j(x))\cdot Df^j(x)\cdot s|= C \lambda^j |DR(f^j(x)) ||s|
$$
we obtain that $$ \sum_{j=0}^\infty DR(f^j(x))\cdot Df^j(x)\cdot s$$
converges uniformly in $x$ and therefore $D_su_\ap$ is a 
continuous function. So $u_\ap\in \Lb^0$ is differentiable
in the stable direction and its lack of regularity is related
to the unstable direction (the ``Wave-front set'' of $u_\ap$ is 
in the unstable direction, see \cite{horm1} chapter VIII for details).
The same sort of analysis shows that $D_w u_\om$ is 
continuous, for every $w\in E^u$. So, $u_\om$ is differentiable
in the unstable direction and its lack of regularity is related 
to the stable direction. 

\begin{rmk} Note that we are interested in the isotropic regularity of the Birkhoff sums.   The logarithm Besov spaces $B^{s,b}_{\infty,\infty}$ are isotropic spaces. All directions are treated in the same way. In the  general case of an  (nonlinear) Anosov diffeomorphisms on a compact manifold,  
the stable and unstable are typically just  H\"older invariant foliations, so it  is a more difficult setting, and it deserves
further research.  Anisotropic Banach spaces will certainly be quite useful here, since much more it is known on the regularity of the Koopman operator  for many anisotropic Banach spaces in the literature, and consequently the "anisotropic"  regularity of the solutions of the cohomological equation. See  Baladi and Tsujii \cite{bt2}, Blank, Keller and  Liverani \cite{bkl}, Gou\"{e}zel and  Liverani \cite{gl2}, and Baladi \cite{bbook}. 
\end{rmk} 

\subsection{Birkhoff sums as derivatives of infinitesimal conjugacies} The interest on Birkhoff sums as distributions can be motived by the following problem. Let $F_t$ be a $C^\beta$-smooth family of $C^\beta$-Anosov diffeomorphisms on $\mathbb{T}^2$, with $\beta > 2$, and  such that $F_0$ is the Arnold's cat map. Since Anosov maps are structurally stable there is a family of homeomorphisms $H_t$ such that 
$$H_t\circ F_0 = F_t\circ H_t$$
with $H_0(x)=x$. It is not difficult to see that for each $x\in \mathbb{T}^2$ the map
$$t\mapsto H_t(x)$$
is smooth. If $W=\partial F_t|_{t=0}$ and $\alpha=\partial_t H_t|_{t=0}$ then
$$W =\alpha\circ F_0 -  DF_0\cdot \alpha.$$
From now on it is more convenient to consider $W$ and $\alpha$ as $\mathbb{Z}^2$-periodic functions on $\mathbb{R}^2$.  Let $\pi_s\colon \mathbb{R}^2\rightarrow E^s$ and $\pi_u\colon \mathbb{R}^2\rightarrow E^u$ be linear projections on the stable and unstable directions of $F_0$ with $\pi_s(x)+\pi_u(x)=x$. Let $\lambda_s$ and $\lambda_u$ be the stable and unstable eigenvalues of $F_0$ (note that $\lambda_s\lambda_u=1$) and by $v_s$ and $v_u$ the respective eigenvectors with $|v_s|=|v_u|=1$.  Using $B=(v_s,v_u)$ as a base, and $v=xv_s+yv_u=(x,y)_B$,   we can write 
$$\pi_s\circ W = \alpha_s \circ F_0 - DF_0\cdot \alpha_s,$$
$$\pi_u\circ W = \alpha_u \circ F_0 - DF_0\cdot \alpha_u,$$
that implies 
$$\alpha_s(x,y)=     -  \sum_{k=0}^\infty  DF_0^{k+1}\cdot  \pi_s\circ W(F_0^{-k}(v))=-  \sum_{k=0}^\infty  \lambda_s^k \pi_s\circ W(\lambda_s^{-k}x,\lambda_u^{-k}y),$$
$$\alpha_u(x,y)=     -  \sum_{k=0}^\infty  DF_0^{-(k+1)}\cdot  \pi_u\circ W(F_0^{k}(v))= -  \sum_{k=0}^\infty \lambda_u^{-k}   \pi_u\circ W(\lambda_s^kx,\lambda_u^k y),$$
so  $\alpha=\alpha_s+\alpha_u$. We call $\alpha$ the {\it infinitesimal deformation} associated to $V$ and $F_0$. If we formally derive $\alpha$ we get

$$\partial_s \alpha(x,y)=  -  \sum_{k=0}^\infty \pi_s\circ \partial_s W(F_0^{-k}(x,y))  -  \sum_{k=0}^\infty \Big(\frac{\lambda_s}{\lambda_u}\Big)^k  \pi_u\circ \partial_s W(F_0^{-k}(x,y)),$$

$$\partial_u \alpha(x,y)=  -  \sum_{k=0}^\infty \pi_u\circ \partial_u W(F_0^{k}(x,y))  -  \sum_{k=0}^\infty \Big(\frac{\lambda_s}{\lambda_u}\Big)^k  \pi_s\circ \partial_u W(F_0^{k}(x,y)),$$

The first term in both expressions is a  Birkhoff sum (in distinct time directions). The second term are continuous  functions since $|\lambda_s/\lambda_u| < 1$. So the regularity of $\alpha$ depends on the regularity of Birkhoff sums. 
Theorem  \ref{sigma} implies $\partial_s \alpha, \partial_u \alpha\in \Lambda^0$, so $\alpha \in \Lambda^1$, that is, $\alpha$ is a  Zygmund function.

It is an intriguing observation,  up to this point  limited to simple linear Anosov diffeomorphisms. One may ask if we can study the regularity of infinitesimal deformations of nonlinear Anosov diffeomorphisms using such methods. This poses new difficulties since the stable and unstable foliations are typically far less regular.  

A similar  study of deformations of one-dimensional piecewise expanding maps allows us to give a far more complete picture. See G.R. and S.  \cite{dois}  and previous results by Baladi and S. \cite{bs0} \cite{smooth} \cite{alternative}.

\section{Piecewise expanding maps: Primitives of Birkhoff  sums}

We define $I=[a,b]$. Let $C=\{c_0,c_1, \dots, c_n\}$, with $c_0=a$, $c_n=b$, and  $c_i< c_{i+1}$, for every $i< n$.  Given $n\in \mathbb{N}$ and $\beta\in [0,1)\cup\{BV\}$, let $\mathcal{B}^{n+\beta}(C)$ be the space of all functions 
$$v\colon \cup_{i< n}  (c_i,c_{i+1})\rightarrow \mathbb{C}$$
such that 
\begin{itemize}
\item For each $i< n$, $v$ can be extended to a function $v_i\colon [c_i,c_{i+1}]\rightarrow \mathbb{C}$ which  is $n-1$ times differentiable and $\partial^{n-1}v_i$ is absolutely continuous and its derivative is continuous for $\beta=0$, it is $\beta$-H\"older, if $\beta\in (0,1)$, and has bounded variation of $\beta=BV$.
\end{itemize} 
Let $$\hat{I}=\{a^+,b^-\}\cup \{x^+,x^-\colon x\in (a,b)\}.$$
Every $v\in \mathcal{B}^{n+\beta}(C)$ induces a function $v\colon \hat{I}\rightarrow \mathbb{C}$  defined by
$$v(x^\star)=\lim_{z\rightarrow x^\star} v(z),$$
where $x\in I$, $\star\in \{+,-\}$ and $x^\star\in \hat{I}$.

We will denote $\mathcal{B}^{n+0}(C)$ by $\mathcal{B}^{n}(C)$. Let $\mathcal{B}^{n+\beta}_{exp}(C)$, with $n\geq 1$, be the set of all $f\in \mathcal{B}^{n+\beta}(C)$ such that 
\begin{itemize}
\item $f$ is monotone on each interval $(c_i,c_{i+1})$, $i< n$.
\item For every $i$ we have $f_i[c_i,c_{i+1}]\subset I$.
\item   There is $\theta > 1$ such that  
$$\min_{i< n} \inf_{x\in [c_i,c_{i+1}]}  |Df_i(x)|\geq \theta.$$
\end{itemize} 
Note that $f^k\in \mathcal{B}^{n+\beta}_{exp}(C_k)$, for some set $C_k$ and we can indeed define an extension $f^k\colon \hat{I}\rightarrow  \hat{I}$ using lateral limits. Moreover if $v\in \mathcal{B}^{n+\beta}(C)$ then $v\circ f^k \in  \mathcal{B}^{n+\beta}(C_k)$. Let $f\in \mathcal{B}^{1+BV}(C)$. Let $L$ be the {\it transfer operator}  of $f$ associated with the Lebesgue measure $m$ on $I$. Then Lasota and Yorke \cite{ly} proved that \\

\begin{itemize}
\item{\bf (Lasota-Yorke inequality in BV)}  There is $\Cll{b}$ and $\Cll[c]{c}$ such that  
\begin{equation}\label{ly} |L \gamma|_{BV}\leq    \Crr{c} |\gamma|_{BV}+ \Crr{b}|\gamma|_{L^1}. \end{equation}
\end{itemize} 

This  implies
\begin{equation}\label{lyy} |L^i \gamma|_{BV}\leq  \Cll{ff} \Crr{c}^i |\gamma|_{BV}+ \Cll{0}|\gamma|_{L^1}. \end{equation}
for every $i$. Let
$$\Lambda = \{ \lambda\in \mathbb{S}^1\colon \ \lambda \in \sigma(L)  \},$$
where $ \sigma(L)$ is the spectrum of $L$ in $BV$. Lasota-Yorke inequality implies  that  $\Lambda$ is finite, $1\in \Lambda$  and we can write 
\begin{equation}\label{ii}  L = \sum_{\lambda\in \Lambda}   \lambda \Phi_\lambda   +    K\end{equation}
where $\Phi_\lambda^2=\Phi_\lambda$, $\Phi_\lambda \Phi_{\lambda'}=0$ if $j\neq j'$ and $K\Phi_\lambda=\Phi_{\lambda}K=0$. Moreover
\begin{itemize}
\item[i.]  $\Phi_\lambda$ is a finite rank  operator.
\item[ii.] $K$ is a bounded operator in $BV$ whose  spectral radius  is smaller than one, that is, there is $\Cll[c]{cpf}  \in (0,1)$ and $\Cll{cd}$ such that 
$$|K^j(\phi)|_{BV}\leq \Crr{cd}\Crr{cpf}^j |\phi|_{BV}.$$
\item[iii.]  there is $p=p(f)\in \mathbb{N}^\star$ such that for every $\lambda\in \Lambda$ we have $\lambda^p=1$. \\
\end{itemize} 

We will use Lasota-Yorke result many times along this work. Let $\Cll[c]{cmin}= sup_{x\in I} |Df(x)|^{-1}.$

\begin{lemma}\label{uu}   There is $\Crr{0}$, that depends only the constants in the Lasota-Yorke inequality, such that 
$$|\Phi_\lambda(\gamma)|_{BV}\leq \Crr{0} |\gamma|_{L^1(m)}$$
for every $\gamma\in BV$. 
\end{lemma}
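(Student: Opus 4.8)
The plan is to recover the finite-rank spectral projection $\Phi_\lambda$ as a limit of twisted Cesàro averages of the powers of the transfer operator $L$, and to bound those averages by the iterated Lasota--Yorke inequality (\ref{lyy}). First I would observe that, from the decomposition (\ref{ii}) together with the relations $\Phi_\mu^2=\Phi_\mu$, $\Phi_\mu\Phi_{\mu'}=0$ for $\mu\ne\mu'$, and $K\Phi_\mu=\Phi_\mu K=0$, one gets $L^n=\sum_{\mu\in\Lambda}\mu^n\Phi_\mu+K^n$ for every $n\ge 1$. Fixing $\lambda\in\Lambda$ and setting $S_N:=\frac1N\sum_{n=1}^{N}\lambda^{-n}L^n$, this gives
\[
S_N=\sum_{\mu\in\Lambda}\Bigl(\frac1N\sum_{n=1}^{N}(\mu\lambda^{-1})^n\Bigr)\Phi_\mu+\frac1N\sum_{n=1}^{N}\lambda^{-n}K^n .
\]

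Next I would check that $S_N\to\Phi_\lambda$ in the operator norm of $BV$: the bracketed geometric sum equals $1$ when $\mu=\lambda$, and has modulus at most $\tfrac{2}{N\,|\mu\lambda^{-1}-1|}$ when $\mu\ne\lambda$, which tends to $0$ uniformly because $\Lambda$ is finite and each $\Phi_\mu$ is bounded on $BV$; moreover $\bigl|\frac1N\sum_{n=1}^{N}\lambda^{-n}K^n\bigr|_{BV}\le \frac{\Crr{cd}}{N(1-\Crr{cpf})}\to 0$ since $\Crr{cpf}<1$. In particular $|S_N\gamma|_{BV}\to|\Phi_\lambda\gamma|_{BV}$ for each $\gamma\in BV$. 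On the other hand, applying (\ref{lyy}) to each power and using $\Crr{c}<1$,
\[
|S_N\gamma|_{BV}\le\frac1N\sum_{n=1}^{N}|L^n\gamma|_{BV}\le\frac{\Crr{ff}\Crr{c}}{N(1-\Crr{c})}|\gamma|_{BV}+\Crr{0}\,|\gamma|_{L^1(m)} .
\]
Letting $N\to\infty$ annihilates the first term on the right and leaves $|\Phi_\lambda\gamma|_{BV}\le\Crr{0}\,|\gamma|_{L^1(m)}$, with $\Crr{0}$ the very constant appearing in (\ref{lyy}); in particular it is independent of $\lambda$ and depends only on the Lasota--Yorke constants, as required.

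The only step that genuinely needs attention is this last assertion about the constant: a priori one could fear a dependence on $p$, on the peripheral spectrum $\Lambda$, or on the secondary constants $\Crr{cd},\Crr{cpf}$ of (\ref{ii}). The twisted Cesàro average is chosen precisely to rule this out --- those data enter only through the rate at which $S_N\to\Phi_\lambda$ and play no role in the limiting inequality, since the $1/N$ prefactor together with $\Crr{c}<1$ kills the $|\gamma|_{BV}$ part of (\ref{lyy}) while the $|\gamma|_{L^1(m)}$ part survives unchanged. This is also why I would avoid a contour-integral / Riesz-projection route, whose bounds would be phrased in terms of hard-to-control resolvent norms. For completeness I would recall at the outset that (\ref{lyy}) is itself obtained from (\ref{ly}) by iteration, using that $L$ is an $L^1(m)$-contraction, so (\ref{lyy}) is available with constants depending only on those of (\ref{ly}).
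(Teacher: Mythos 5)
Your proof is correct and follows essentially the same route as the paper: the paper also recovers $\Phi_\lambda$ as the limit of the twisted Ces\`aro averages $\frac{1}{np}\sum_{i=0}^{np-1}\lambda^{-i}L^i\gamma$ and then invokes the iterated Lasota--Yorke bound (\ref{lyy}) so that the $|\gamma|_{BV}$ contribution is killed by the $1/N$ prefactor. The only (cosmetic) difference is that the paper sums over blocks of length $p$, which makes the cross-terms $\sum(\mu\lambda^{-1})^i$ vanish exactly rather than merely tend to zero; your general-$N$ version with the geometric-sum estimate works just as well.
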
 
\begin{proof}  It follows from  (\ref{ii}) that 
$$\lim_n \frac{1}{np}\sum_{i=0}^{np-1} \frac{1}{\lambda^i} L^{i}\gamma = \Phi_\lambda(\gamma)$$
in $BV$. So  (\ref{lyy}) implies
$$|\Phi_\lambda(\gamma)|_{BV}\leq  \Crr{0} |\gamma|_{L^1}.$$

\end{proof}

\begin{lemma}\label{kk} Let $p=p(f).$ For every $\gamma\in BV$ and $\phi \in L^1(I)$ such that  $$\int \phi \Phi_1(\gamma) \ dm=0.$$
we have
\begin{align}
\int \phi  \sum_{j=pj_1}^{pj_2-1} K^{j}(\gamma) \ dm &=\sum_{k=j_1}^{j_2-1}  \int \phi \sum_{j=0}^{p-1} L^{kp+j}\gamma \ dm \\
&=  \int  \gamma \cdot \Big( \sum_{k=j_1}^{j_2-1}  \sum_{j=0}^{p-1} \phi\circ f^{kp+j} \Big)\ dm.\nonumber
\end{align}
for every $j_1,j_2\in \mathbb{N}\cup \{+\infty\}$, $j_1\leq j_2$.
\end{lemma}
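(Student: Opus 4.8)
The plan is to prove Lemma \ref{kk} by exploiting the spectral decomposition (\ref{ii}) together with the adjoint relation between the transfer operator $L$ and the Koopman operator $\gamma \mapsto \gamma \circ f$. First I would observe that the decomposition $L = \sum_{\lambda \in \Lambda} \lambda \Phi_\lambda + K$ implies, for each fixed residue class modulo $p$, a clean identity for iterates: since $\lambda^p = 1$ for all $\lambda \in \Lambda$ by property (iii), one gets $L^{kp+j} = \sum_{\lambda \in \Lambda} \lambda^j \Phi_\lambda + K^{kp+j}$, and summing over $j = 0, \dots, p-1$ the factor $\sum_{j=0}^{p-1} \lambda^j$ equals $p$ when $\lambda = 1$ and $0$ otherwise (geometric sum of a nontrivial $p$-th root of unity). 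Hence $\sum_{j=0}^{p-1} L^{kp+j} \gamma = p\, \Phi_1(\gamma) + \sum_{j=0}^{p-1} K^{kp+j}\gamma$. Pairing with $\phi$ and using the hypothesis $\int \phi\, \Phi_1(\gamma)\, dm = 0$ kills the $\Phi_1$ term, giving $\int \phi \sum_{j=0}^{p-1} L^{kp+j}\gamma\, dm = \int \phi \sum_{j=0}^{p-1} K^{kp+j}\gamma\, dm = \int \phi \sum_{j=pk}^{pk+p-1} K^j \gamma\, dm$ (the last equality because $K\Phi_\lambda = \Phi_\lambda K = 0$, so $K^j$ agrees with the "$K$-part" of $L^j$ on all of $BV$; more carefully, $K^j = L^j - \sum_\lambda \lambda^j \Phi_\lambda$). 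Summing this over $k = j_1, \dots, j_2 - 1$ yields the first claimed equality, with the telescoping of index ranges $\bigcup_{k=j_1}^{j_2-1}\{pk, \dots, pk+p-1\} = \{pj_1, \dots, pj_2 - 1\}$.

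The second equality is the duality identity: $\int \phi \cdot L^i \gamma \, dm = \int (\phi \circ f^i) \cdot \gamma \, dm$, which is the defining property of the transfer operator associated with $m$ and holds for $\phi \in L^1$ (or $L^\infty$) and $\gamma \in L^1$ because $L$ is the pre-adjoint of the Koopman operator. Applying this termwise to $\sum_{k=j_1}^{j_2-1}\sum_{j=0}^{p-1} \int \phi \cdot L^{kp+j}\gamma\, dm$ and moving the (finite, for finite $j_1,j_2$) sum inside the integral gives $\int \gamma \cdot \big(\sum_{k=j_1}^{j_2-1}\sum_{j=0}^{p-1} \phi \circ f^{kp+j}\big)\, dm$, which is the right-hand side.

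The case $j_2 = +\infty$ requires a convergence argument rather than a finite rearrangement. Here I would invoke property (ii): $|K^j \phi|_{BV} \le \Crr{cd}\, \Crr{cpf}^{\,j} |\phi|_{BV}$ with $\Crr{cpf} < 1$, so $\sum_{j} K^j \gamma$ converges absolutely in $BV$ (hence in $L^1$), which makes $\sum_{j=pj_1}^{\infty} \int \phi\, K^j\gamma\, dm$ an absolutely convergent series for $\phi \in L^\infty$ and legitimizes passing to the limit $j_2 \to \infty$ on the left-hand side; on the right-hand side, the same geometric bound (transported through the duality) shows $\sum_k \sum_j \int \gamma \cdot \phi\circ f^{kp+j}\, dm$ converges absolutely, so dominated convergence lets the finite-$j_2$ identity pass to the limit. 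The case $j_1 = \infty$ or $j_1 = j_2$ is trivial (empty sum).

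The main obstacle — though it is more bookkeeping than genuine difficulty — is being careful with the two distinct index groupings ($K^j$ indexed singly versus $L^{kp+j}$ indexed by the pair $(k,j)$) and ensuring the orthogonality hypothesis is used at exactly the right place so that it is the $\Phi_1$-component, and only that component, that must vanish; one should note $\sum_{\lambda \ne 1} (\sum_{j=0}^{p-1}\lambda^j)\Phi_\lambda = 0$ identically, so no orthogonality to the other $\Phi_\lambda$ is needed. A secondary point worth a line is justifying the interchange of the infinite sum with the integral in the $j_2 = \infty$ case purely via the spectral gap estimate (ii), rather than any hypothesis on the decay of correlations, since $K$ already contracts geometrically on $BV$.
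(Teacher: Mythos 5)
Your proposal is correct and follows essentially the same route as the paper's proof: the spectral decomposition $L=\sum_{\lambda\in\Lambda}\lambda\Phi_\lambda+K$, the cancellation $\sum_{j=0}^{p-1}\lambda^{j}=0$ for $\lambda\neq 1$ over blocks of length $p$, the hypothesis $\int\phi\,\Phi_1(\gamma)\,dm=0$ to remove the $\Phi_1$ term, the duality $\int\phi\,L^{i}\gamma\,dm=\int\gamma\,(\phi\circ f^{i})\,dm$, and the geometric contraction of $K$ on $BV$ for the case $j_2=+\infty$. The only difference is cosmetic: the paper starts from the $K$-sum and adds the $\Phi_\lambda$ terms back in, while you start from the $L$-sum and strip them off, which is the same computation run in the opposite direction.
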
 
\begin{proof} For every $\gamma \in BV$ we have that 
$$\int \phi  \sum_{j=pj_1}^{pj_2-1} K^{j}(\gamma) \ dm$$
converges, since the spectral radius os $K$ is smaller than one in $BV$. Since 
$$ \sum_{j=0}^{p-1}  \lambda^{j}=0$$ 
for $\lambda\in \Lambda\setminus \{1\}$  we have 
\begin{align*}
\int \phi  \sum_{j=pj_1}^{pj_2-1} K^{j}(\gamma) \ dm&=\sum_{k=j_1}^{j_2-1}  \int \phi \Big( \sum_{j=0}^{p-1} K^{kp+j}(\gamma) \Big) \ dm  \\
&=\sum_{k=j_1}^{j_2-1} \int \phi \Big( \sum_{j=0}^{p-1}  \Phi_1(\gamma) + \sum_{j=0}^{p-1} K^{kp+j}(\gamma) \Big) \ dm  \\
&=\sum_{k=j_1}^{j_2-1}   \int \phi \Big(\sum_{\lambda\in \Lambda}  \lambda^{kp} \sum_{j=0}^{p-1}  \lambda^{j} \Phi_\lambda(\gamma) + \sum_{j=0}^{p-1} K^{kp+j}(\gamma) \Big) \ dm \\
&=\sum_{k=j_1}^{j_2-1}   \int \phi  \Big( \sum_{j=0}^{p-1} \sum_{\lambda\in \Lambda} \lambda^{kp+j} \Phi_\lambda(\gamma) + \sum_{j=0}^{p-1} K^{kp+j}(\gamma) \Big) \ dm \\
&=\sum_{k=j_1}^{j_2-1}  \int \phi \sum_{j=0}^{p-1} L^{kp+j}\gamma \ dm \\
&=\sum_{k=j_1}^{j_2-1}  \int  \gamma \cdot \Big( \sum_{j=0}^{p-1} \phi\circ f^{kp+j} \Big)\ dm\\
&=  \int  \gamma \cdot \Big( \sum_{k=j_1}^{j_2-1}  \sum_{j=0}^{p-1} \phi\circ f^{kp+j} \Big)\ dm.
\end{align*}

\end{proof}

\subsection{Log-Lipschitz regularity} 

\begin{theorem} \label{vvv} Let $f\colon I \rightarrow I$ be a piecewise $C^{1+BV}$ expanding map on the interval $I=[a,b]$. 
Let $p$ be  a multiplier of $p(f).$  There are  $\Cll{cdd}$, $\Cll{e}$ and $\Cll{f}$ with the following property. Let $\phi\colon I\rightarrow \mathbb{C}$ be a function in $L^\infty(I)$ such that 
$$\int \phi \cdot \Phi_1(\gamma) \ dm=0$$
for every $\gamma \in BV$.  Then 
\begin{itemize}
\item[A.] For every $\gamma\in BV$  there is  $i_0 \in p\mathbb{N}$ such that 
$$\frac{\ln |\gamma|_{BV} - \ln  |\gamma|_{L^1(m)}}{-\ln \Crr{c}} - p(f) \leq i_0\leq \frac{\ln |\gamma|_{BV} - \ln  |\gamma|_{L^1(m)}}{-\ln \Crr{c}} + p(f)$$
and
\begin{align*}
&\big| \int  \gamma \cdot \Big( \sum_{k=i_0/p}^{\infty} \sum_{j=0}^{p-1} \phi\circ f^{kp+j} \Big)\ dm \Big|\\
&\leq \Crr{cdd} |\phi|_{L^\infty(m)}    |\gamma|_{L^1}.
\end{align*}
\item[B.] For every $\gamma\in BV$ 
\begin{align}\label{eess}  & \sum_{k=0}^\infty  \Big|   \int  \gamma \cdot \Big(  \sum_{j=0}^{p-1} \phi\circ f^{kp+j} \Big)\ dm \Big| \nonumber \\
& \leq    \Crr{e}  |\phi|_{L^\infty(m)} ( ( \ln |\gamma|_{BV}-\ln |\gamma|_{L^1(m)})   + \Crr{f})  |\gamma|_{L^1(m)}.
\end{align} 
\item[C.] We have that 
$$\psi(x)=   \int  1_{[a,x]}\cdot \Big( \sum_{k=0}^\infty  \sum_{j=0}^{p-1} \phi\circ f^{kp+j} \Big)\ dm,$$
is well defined for every $x\in I$ and 
\begin{equation} \label{eesss} |\psi(x)-\psi(y)|\leq  \Crr{e}  |x-y| |\ln |x-y||+ (\Crr{f}+\ln (2+|I|))|x-y|.\end{equation}  
  The constants $\Crr{e}$ and $\Crr{f}$ depend only on $\Crr{b}$,   $m(I)$ and $|\phi|_{L^\infty}$. 
  \item[D.] Define
 \begin{equation}\label{psin} \psi_n(x)=   \int  1_{[a,x]}\cdot \Big( \sum_{k=0}^n  \sum_{j=0}^{p-1} \phi\circ f^{kp+j} \Big)\ dm.\end{equation}
 then for every $\beta\in (0,1)$ we have
 $$\lim_n |\psi_n-\psi|_{C^\beta(I)}=0.$$
 \item[E.] For every $\gamma\in BV$  we have 
 $$\int \gamma \ d\psi=   \int  \gamma  \Big( \sum_{k=0}^\infty  \sum_{j=0}^{p-1} \phi\circ f^{kp+j} \Big)\ dm,$$
 where the left-hand side is a (central) Young integral. Moreover for $\gamma\in C^\infty(I)$ 
 \begin{equation}\label{distb}  -\int \psi D\gamma \ dm =  -\gamma(b)\psi(b)+  \int  \gamma  \Big( \sum_{k=0}^\infty  \sum_{j=0}^{p-1} \phi\circ f^{kp+j} \Big)\ dm\end{equation}
 so we have 
 $$D\psi =-\psi(b)\delta_b +  \sum_{k=0}^\infty  \sum_{j=0}^{p-1} \phi\circ f^{kp+j} $$
 in the sense of distributions. 
  \end{itemize}
\end{theorem}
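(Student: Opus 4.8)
The strategy is to exploit the spectral decomposition (\ref{ii}) of the transfer operator $L$ together with Lemma \ref{kk}, which converts partial Birkhoff sums of $\phi$ paired against $\gamma\in BV$ into $L^1$-pairings of $\phi$ against iterates of the ``hyperbolic part'' $K$ applied to $\gamma$. The key quantitative input is the exponential contraction $|K^j\gamma|_{BV}\le\Crr{cd}\Crr{cpf}^j|\gamma|_{BV}$ from item (ii) after (\ref{ii}), combined with the uniform bound $|K^j\gamma|_{BV}\le \Crr{ff}\Crr{c}^j|\gamma|_{BV}+\Crr{0}|\gamma|_{L^1}$ obtained by iterating the Lasota--Yorke inequality (\ref{lyy}) (and subtracting off $\Phi_1(\gamma)$, which is harmless since $\int\phi\,\Phi_1(\gamma)\,dm=0$ by hypothesis and $\Phi_\lambda K=0$). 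First I would prove part A: choose the crossover index $i_0\in p\mathbb N$ so that $\Crr{c}^{i_0}\approx |\gamma|_{L^1}/|\gamma|_{BV}$, i.e. $i_0$ in the stated window, so that for $j\ge i_0$ the bound $|K^j\gamma|_{BV}\lesssim \Crr{cpf}^{j-i_0}|\gamma|_{L^1}$ holds (using $|K^{i_0}\gamma|_{BV}\lesssim |\gamma|_{L^1}$); then by Lemma \ref{kk} the tail $\int\gamma\cdot(\sum_{k\ge i_0/p}\sum_{j=0}^{p-1}\phi\circ f^{kp+j})\,dm = \int\phi\sum_{j\ge i_0}K^j\gamma\,dm$ is bounded by $|\phi|_{L^\infty}\sum_{j\ge i_0}|K^j\gamma|_{L^1}\lesssim |\phi|_{L^\infty}|\gamma|_{L^1}$, giving part A.

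For part B I would split the full sum at $i_0$: the tail $k\ge i_0/p$ is controlled by part A, and the head $0\le k< i_0/p$ is estimated blockwise. For $0\le j< i_0$ one has, again by Lemma \ref{kk}, $|\int\gamma\cdot(\sum_{j=0}^{p-1}\phi\circ f^{kp+j})\,dm| = |\int\phi\sum_{j=0}^{p-1}K^{kp+j}\gamma\,dm|\le p|\phi|_{L^\infty}|\gamma|_{L^1}$ — crucially I bound $|K^j\gamma|_{L^1}$ rather than $|K^j\gamma|_{BV}$, using that $L$ (hence $K$ up to the finite-rank projections) is an $L^1$-contraction, so each of the $\approx i_0/p$ blocks contributes $O(|\phi|_{L^\infty}|\gamma|_{L^1})$. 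Summing the $i_0/p$ blocks and recalling $i_0\approx (\ln|\gamma|_{BV}-\ln|\gamma|_{L^1})/(-\ln\Crr{c})$ yields the logarithmic factor in (\ref{eess}), with $\Crr{e},\Crr{f}$ depending only on the stated constants. Part C is then immediate by applying part B with $\gamma=1_{[a,x]}-1_{[a,y]}$ (which lies in $BV$ with $|\gamma|_{BV}\le 2+|I|$ say and $|\gamma|_{L^1}=|x-y|$), and checking the well-definedness of $\psi$ by the same split; the extra $\ln(2+|I|)$ absorbs the $\ln|\gamma|_{BV}$ term.

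Parts D and E are softer. For D, $\psi_n-\psi$ is the primitive of the tail Birkhoff sum starting at $k=n+1$; estimating $\int(1_{[a,x]}-1_{[a,y]})\cdot(\text{tail})\,dm$ as in part A but with the sum starting at $(n+1)p$ gives, for $|x-y|$ small, a bound like $\min(|x-y|,\Crr{cpf}^{n})$ up to constants, from which $\beta$-H\"older convergence for any $\beta\in(0,1)$ follows by interpolation between the uniform $L^\infty$ bound and the Lipschitz-type bound on each scale. For E, the identity $\int\gamma\,d\psi=\int\gamma\,(\sum\phi\circ f^{\cdot})\,dm$ for $\gamma\in BV$ follows from part D together with standard continuity of the Young integral under the relevant (complementary H\"older/$BV$) convergence, since $\psi_n$ is a genuine $C^{1+}$ function whose classical derivative is the finite sum $\sum_{k=0}^n\sum_{j=0}^{p-1}\phi\circ f^{kp+j}$; integrating by parts for $\gamma\in C^\infty$ and passing to the limit gives (\ref{distb}) and hence the stated distributional identity $D\psi=-\psi(b)\delta_b+\sum_{k=0}^\infty\sum_{j=0}^{p-1}\phi\circ f^{kp+j}$.

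The main obstacle is part B: one must be careful to use the $L^1$-contraction of the transfer operator on the ``head'' blocks (so each block costs only $|\gamma|_{L^1}$, not $|\gamma|_{BV}$), while simultaneously using $BV$-contraction of $K$ on the ``tail'' — and the two regimes must be matched exactly at $i_0$ so that the number of head blocks is $O(\ln(|\gamma|_{BV}/|\gamma|_{L^1}))$; getting the constants to depend only on the Lasota--Yorke data, $m(I)$ and $|\phi|_{L^\infty}$ (and not, say, on $p$ in an uncontrolled way) requires tracking the dependence through (\ref{lyy}) and Lemma \ref{uu}. The other delicate point is justifying that the primitive $\psi$ has the claimed distributional derivative: this needs the uniform-on-compacts convergence $\psi_n\to\psi$ in $C^\beta$ from part D, which is why D must be proved before E.
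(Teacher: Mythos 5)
Your plan follows essentially the same route as the paper's proof: Lemma \ref{kk} plus the spectral splitting $L=\sum_\lambda\lambda\Phi_\lambda+K$, the $L^1$-contraction on the head blocks up to the crossover index $i_0$ with $\Crr{c}^{i_0}\approx|\gamma|_{L^1}/|\gamma|_{BV}$, the $BV$-contraction of $K$ (after an $i_0$-fold Lasota--Yorke smoothing) on the tail, indicators $1_{[x,y]}$ for part C, the same head/tail case analysis for the $C^\beta$ convergence in D, and Love--Young continuity plus integration by parts for E. The only (harmless) slip is calling $\psi_n$ a $C^{1+}$ function: since $\phi$ is merely $L^\infty$, $\psi_n$ is only Lipschitz/absolutely continuous with the finite Birkhoff sum as a.e.\ derivative, which is all that is needed for the identity $\int\gamma\,d\psi_n=\int\gamma\,\bigl(\sum_{k\le n}\sum_{j<p}\phi\circ f^{kp+j}\bigr)\,dm$.
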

\begin{proof}  By  Lemma \ref{kk} 
\begin{align*}
&\int  \gamma \cdot \Big( \sum_{j=0}^{p-1} \phi\circ f^{kp+j} \Big)\ dm \\
&=   \int \phi \sum_{j=0}^{p-1} K^{kp+j} ( \gamma )\ dm \\
&=   \int \phi \sum_{j=0}^{p-1} L^{kp+j} ( \gamma )\  dm \\
&=   \int \phi \sum_{j=0}^{p-1} L^{kp+j} ( \gamma- \Phi_1(\gamma))\  dm
\end{align*}
for every $k\in \mathbb{N}$. Due Lemma \ref{uu} we have
\begin{align*}&|L^{i} \big(\gamma- \Phi_1(\gamma)\big)|_{L^1(m)}\\
&\leq |\gamma- \Phi_1(\gamma)|_{L^1(m)}\leq \Cll{we}  |\gamma|_{L^1(m)}\end{align*} 
for every $i$. Let $i_0=i_0(\gamma)$ be the largest  $i_0 \in p\mathbb{N}$ such that 
$$ \Crr{c}^{i_0}|\gamma|_{BV} \leq    |\gamma|_{L^1(m)}.$$
Then 
\begin{equation}\label{98}  \frac{\ln |\gamma|_{BV} - \ln  |\gamma|_{L^1(m)}}{-\ln \Crr{c}} - p(f)  \leq i_0\leq \frac{\ln |\gamma|_{BV} - \ln  |\gamma|_{L^1(m)}}{-\ln \Crr{c}} + p,\end{equation} 
so
\begin{align*} &\sum_{i=0}^{i_0-1}  |L^{i}( \gamma- \Phi_1(\gamma))|_{L^1(m)}\\
& \leq \Cll{ty}  i_0 |\gamma- \Phi_1(\gamma)|_{L^1(m)}\\
&\leq  \Cll{er}  (  \frac{\ln |\gamma|_{BV}-\ln |\gamma|_{L^1(m)}}{-\ln \Crr{c}}   + p)  |\gamma|_{L^1(m)}.
\end{align*} 
and
\begin{align*}
&\big| \int  \gamma \cdot \Big( \sum_{k=0}^{i_0/p-1} \sum_{j=0}^{p-1} \phi\circ f^{kp+j} \Big)\ dm \Big|\\
&= \Big|   \int \phi \sum_{j=0}^{i_0/p-1}  \sum_{j=0}^{p-1} L^{kp+j} ( \gamma- \Phi_1(\gamma))\  dm \Big| \\
&\leq |  \phi|_{L^\infty(m)}  \sum_{j=0}^{i_0/p-1} \sum_{j=0}^{p-1}  | L^{kp+j} ( \gamma- \Phi_1(\gamma))|_{L^1(m)} \\
&\leq  \Crr{er} |\phi|_{L^\infty(m)}  (  \frac{\ln |\gamma|_{BV}-\ln |\gamma|_{L^1(m)}}{-\ln \Crr{c}}   + p)  |\gamma|_{L^1(m)}.  \numberthis \label{a11}
\end{align*}
By  (\ref{lyy})
\begin{align*}& |L^{i_0}( \gamma- \Phi_1(\gamma))|_{BV}\leq  \Crr{ff} \Crr{c}^{i_0} |\gamma- \Phi_1(\gamma)|_{BV}+ \Crr{0}|\gamma- \Phi_1(\gamma)|_{L^1}\\
&\leq  \Cll{fff} |\gamma|_{L^1(m)}.
\end{align*} 
so for every $n\geq i_0/p$ and $m\in \mathbb{N}\cup\{\infty\}$
\begin{align*}
&\big| \int  \gamma \cdot \Big( \sum_{k=n}^{m} \sum_{j=0}^{p-1} \phi\circ f^{kp+j} \Big)\ dm \Big|\\
&= \Big|   \int \phi \sum_{k=n}^{m}  \sum_{j=0}^{p-1} L^{kp+j} ( \gamma- \Phi_1(\gamma))\  dm \Big| \\
&= \Big|   \int \phi \sum_{k=n-i_0/p}^{m-i_0/p}  \sum_{j=0}^{p-1} L^{kp+j} L^{i_0}( \gamma- \Phi_1(\gamma))\  dm \Big| \\
&= \Big|   \int \phi \sum_{k=n-i_0/p}^{m-i_0/p}  \sum_{j=0}^{p-1} K^{kp+j} L^{i_0}( \gamma- \Phi_1(\gamma))\  dm \Big| \\
&\leq  \Crr{cd} |\phi|_{L^\infty(m)}         | L^{i_0}( \gamma- \Phi_1(\gamma))|_{BV}\Crr{cpf}^{n-i_0/p} \sum_{j=0}^{\infty} \sum_{j=0}^{p-1} \Crr{cpf}^{kp+j}\\
&\leq \Crr{cdd} |\phi|_{L^\infty(m)} \Crr{cpf}^{n-i_0/p}   |\gamma|_{L^1}.  \numberthis \label{a12}
\end{align*}
Estimates (\ref{a11}) and (\ref{a12}) imply A.  and $B.$ If we choose $\gamma=1_{[a,x]}$ then $B.$ implies $C$.

Let $\psi_n$ be as in (\ref{psin}). Then 

$$\psi(x)-\psi_n(x)=    \int  1_{[a,x]}\cdot \Big( \sum_{k=n+1}^\infty  \sum_{j=0}^{p-1} \phi\circ f^{kp+j} \Big)\ dm. $$
and 
$$|\psi -\psi_n|_{C^\beta(I)}= \sup_{\delta < |I|} \sup_{\substack{x \in I \\ x+\delta \in I}}  \frac{1}{\delta^\beta} \Big|  \int  1_{[x,x+\delta]}\cdot \Big( \sum_{k=n+1}^\infty  \sum_{j=0}^{p-1} \phi\circ f^{kp+j} \Big)\ dm \Big|. $$

Note that $|1_{[x,x+\delta]}|_{BV}= \delta+ 2 \leq 2+|I|$ and $|1_{[x,x+\delta]}|_{L^1(m)}=\delta$. If 

\begin{equation}\label{yyy} np \geq i_0(1_{[x,x+\delta]}),\end{equation} 
note that (\ref{98}) gives us 
$$\delta\leq \Cll{new} \Crr{c}^{i_0(1_{[x,x+\delta]})} $$
for some $\Crr{new}$. Let $\Cll[c]{max}=\max \{\Crr{c}^{1-\beta},\Crr{cpf}^{1/p} \}$. Then  (\ref{a12}) implies 

\begin{align*}  &\frac{1}{\delta^\beta} \Big|  \int  1_{[x,x+\delta]}\cdot \Big( \sum_{k=n+1}^\infty  \sum_{j=0}^{p-1} \phi\circ f^{kp+j} \Big)\ dm \Big| \\
& \leq  \Crr{cdd} |\phi|_{L^\infty(m)}\Crr{cpf}^{n-i_0/p}  \delta^{1-\beta}\\
& \leq \Cll{new2} |\phi|_{L^\infty(m)} \Crr{max}^{np}. \end{align*} 
On the other hand, if (\ref{yyy}) does not hold, then (\ref{a11}) and (\ref{a12}) imply  
\begin{align*}  &\frac{1}{\delta^\beta} \Big|  \int  1_{[x,x+\delta]}\cdot \Big( \sum_{k=n+1}^\infty  \sum_{j=0}^{p-1} \phi\circ f^{kp+j} \Big)\ dm \Big| \\
& \leq   |\phi|_{L^\infty(m)} \delta^{1-\beta}  (  \Crr{er} (  \frac{\ln (2+|I|) -\ln \delta }{-\ln \Crr{c}}   + p-np) +  \Crr{cdd} )
\end{align*} 
and 
\begin{equation}\label{cond1}  \frac{\ln (2+|I|) -\ln \delta }{-\ln \Crr{c}}   + p-np \geq 0.\end{equation} 
which is equivalent to 
$$\delta\leq (2+|I|) \Crr{c}^{p(n-1)}.$$
So if we define $h_n\colon (0,|I|]\rightarrow \mathbb{R}_+^\star$ as 
$$h_n(\delta)=\begin{cases}   \delta^{1-\beta}  (  \Crr{er} (  \frac{\ln (2+|I|) -\ln \delta }{-\ln \Crr{c}}   + p-np) +  \Crr{cdd})+ \Crr{new2}  \Crr{max}^{np}, &     if \ \delta\leq (2+|I|) \Crr{c}^{p(n-1)},\\
 \Crr{new2} \Crr{max}^{np}, \ & \  otherwise.
\end{cases} 
$$
then 
$$|\psi -\psi_n|_{C^\beta(I)}\leq  \sup_{\delta < |I|}  h_n(\delta).$$
Consequently it  is easy to see that $$|\psi -\psi_n|_{C^\beta(I)}\leq \Cll{new3} \Crr{max}^{pn}.$$
This proves D.  In particular
$$\lim_n |\psi -\psi_n|_{BV_{1/\beta}}=0,$$
for every $\beta\in (0,1)$, so Love-Young inequality (see  Lyons, Caruana  and L\'{e}vy \cite[Theorem 1.16]{lyons}) implies 
$$\lim_n \int  \gamma  \ d \psi_n = \int  \gamma    \ d\psi,$$
where all integrals are central Young integrals. On the other hand, since $\psi_n$ is absolutely continuous and $D\psi_n \in BV$ we have
$$\int  \gamma  \ d \psi_n =    \int  \gamma  \Big( \sum_{k=0}^n \sum_{j=0}^{p-1} \phi\circ f^{kp+j} \Big)\ dm, $$
and consequently 
$$ \int  \gamma    \ d\psi=\lim_n \int  \gamma  \ d \psi_n =   \int  \gamma  \Big( \sum_{k=0}^\infty  \sum_{j=0}^{p-1} \phi\circ f^{kp+j} \Big)\ dm.$$
If $\gamma\in C^\infty(I)$ then (\ref{distb}) follows from integration by parts for  Young integrals (see Hildebrandt \cite{Hildebrandt}). This concludes the proof of E.
\end{proof}
\begin{rmk} Using Keller \cite{keller} generalised bounded variations spaces one could prove Theorem \ref{vvv} assuming $\gamma\in BV_q$, with $q\geq 1$. 
\end{rmk} 

 Define
$$\tilde{\psi}_n(x)=\int   1_{[a,x]}\Big( \sum_{k=0}^{n}\phi\circ f^{k}\Big)\  dm.$$
If $p(f)\neq 1$ then $\lim_n \tilde{\psi}_n(x)$ may not exist. But their Ces\`aro mean does converge.
For every $\gamma \in BV$   denote
$$T_u(\gamma)=\frac{1}{u}\sum_{n=0}^{u-1}   \int \gamma \sum_{k=0}^{n} \phi(f^k(u)) \ du.$$

\begin{theorem}\label{conv2} Let $\phi\colon I\rightarrow \mathbb{C}$ be a function in $L^\infty(I)$ such that 
$$\int \phi \Phi_1(\gamma)=0$$
for every $\gamma \in BV$. Then for every $\gamma\in BV$ 
$$\lim_u T_u(\gamma)=    \int \gamma \sum_{n=0}^\infty \sum_{j=0}^{p-1} \phi\circ f^{pn+j} \ dm +  \int \phi \Big( \sum_{\lambda\in \Lambda\setminus\{1\}}  \frac{1}{1-\lambda}  \Phi_\lambda(\gamma)    \Big)\ dm. $$
In particular if 
$$\hat{\psi}_u(x)= \frac{1}{u} \sum_{n=0}^{u-1} \tilde{\psi}_n(x).$$
then
$$\lim_u \hat{\psi}_u(x)=   \int 1_{[a,x]} \sum_{n=0}^\infty \sum_{j=0}^{p-1} \phi\circ f^{pn+j} \ dm + G(x),
$$
where 
$$G(x)= \int \phi \Big( \sum_{\lambda\in \Lambda\setminus\{1\}}  \frac{1}{1-\lambda}  \Phi_\lambda(1_{[a,x]})    \Big)\ dm $$
is a Lipchitz function.
\end{theorem}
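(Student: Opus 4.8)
The plan is to transfer everything to the dual side via the Ruelle--Perron--Frobenius operator and then split off the peripheral spectrum using the spectral decomposition~(\ref{ii}). First I would record that for every $n\in\mathbb{N}$ and $\gamma\in BV$,
\[
\int\gamma\sum_{k=0}^{n}\phi\circ f^{k}\ dm=\int\phi\sum_{k=0}^{n}L^{k}\gamma\ dm,
\]
and that~(\ref{ii}) together with $\Phi_\lambda\Phi_{\lambda'}=0$ for $\lambda\neq\lambda'$, $\Phi_\lambda^2=\Phi_\lambda$ and $K\Phi_\lambda=\Phi_\lambda K=0$ yields $L^{k}=\sum_{\lambda\in\Lambda}\lambda^{k}\Phi_\lambda+K^{k}$, hence
\[
\sum_{k=0}^{n}L^{k}\gamma=(n+1)\,\Phi_1(\gamma)+\sum_{\lambda\in\Lambda\setminus\{1\}}\frac{1-\lambda^{n+1}}{1-\lambda}\,\Phi_\lambda(\gamma)+\sum_{k=0}^{n}K^{k}\gamma.
\]
Pairing with $\phi$ and invoking the hypothesis $\int\phi\,\Phi_1(\gamma)\ dm=0$ removes the $(n+1)$-term. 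The tail $\sum_{k=0}^{n}K^{k}\gamma$ converges in $BV$ to $\sum_{k=0}^{\infty}K^{k}\gamma$ because the spectral radius of $K$ in $BV$ is smaller than one, so $\int\phi\sum_{k=0}^{n}K^{k}\gamma\ dm$ converges as $n\to\infty$ (hence also in Cesàro mean) to $\int\phi\sum_{k=0}^{\infty}K^{k}\gamma\ dm$, which by Lemma~\ref{kk} (with $j_1=0$, $j_2=+\infty$) equals $\int\gamma\big(\sum_{n=0}^{\infty}\sum_{j=0}^{p-1}\phi\circ f^{pn+j}\big)\ dm$; in particular this quantity is finite.

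Next I would average over $n$. In $B_n:=\sum_{\lambda\in\Lambda\setminus\{1\}}\frac{1-\lambda^{n+1}}{1-\lambda}\int\phi\,\Phi_\lambda(\gamma)\ dm$ the constant part contributes $\sum_{\lambda\neq1}\frac{1}{1-\lambda}\int\phi\,\Phi_\lambda(\gamma)\ dm$ after averaging, while $\frac1u\sum_{n=0}^{u-1}\lambda^{n+1}=\frac{\lambda(\lambda^{u}-1)}{u(\lambda-1)}\to0$ since $|\lambda|=1$ and $\lambda\neq1$; hence the oscillatory part averages to zero. Combining the three contributions gives exactly the asserted formula for $\lim_u T_u(\gamma)$. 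The statement for $\hat\psi_u$ then follows by taking $\gamma=1_{[a,x]}$, since $T_u(1_{[a,x]})=\hat\psi_u(x)$ by the definitions of $\tilde\psi_n$ and $\hat\psi_u$: the first term becomes $\int 1_{[a,x]}\sum_{n}\sum_{j}\phi\circ f^{pn+j}\ dm$ and the second becomes $G(x)$.

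Finally, for the Lipschitz property of $G$: for $a\le x<y\le b$ we have $1_{[a,y]}-1_{[a,x]}=1_{(x,y]}$, so $|1_{[a,y]}-1_{[a,x]}|_{L^1(m)}=y-x$; Lemma~\ref{uu} then gives $|\Phi_\lambda(1_{(x,y]})|_{BV}\le \Crr{0}\,(y-x)$ for each $\lambda\in\Lambda\setminus\{1\}$, and since the $BV$ norm controls the sup norm, also $|\Phi_\lambda(1_{(x,y]})|_{L^\infty}\le C\,(y-x)$. As $\Lambda$ is finite and $\phi\in L^\infty(I)$,
\[
|G(x)-G(y)|\le|\phi|_{L^1(m)}\sum_{\lambda\in\Lambda\setminus\{1\}}\frac{1}{|1-\lambda|}\,|\Phi_\lambda(1_{(x,y]})|_{L^\infty}\le C\,|x-y|.
\]
I expect the only genuinely delicate point to be the identification of $\int\phi\sum_{k}K^{k}\gamma\ dm$ with the grouped Birkhoff sum, which is precisely the content of Lemma~\ref{kk}; the remainder is bookkeeping for a Cesàro/geometric-series argument together with the elementary embedding $BV\hookrightarrow L^\infty\hookrightarrow L^1$ on $[a,b]$.
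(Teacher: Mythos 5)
Your proposal is correct and follows essentially the same route as the paper's proof: dualizing through $L$, splitting off the peripheral spectrum via the decomposition (\ref{ii}), killing the $\Phi_1$ term with the hypothesis, Ces\`aro-averaging the unimodular eigenvalue terms, and identifying the $K$-part with the grouped Birkhoff sum via Lemma \ref{kk}. Your explicit verification that $G$ is Lipschitz via Lemma \ref{uu} is a welcome detail the paper leaves implicit, but it does not change the method.
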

\begin{proof} Note that for every $\gamma \in BV$  (the manipulations with eigenvalues   are  as  those  in Broise \cite{broise})
\begin{align*} 
&T_n(\gamma)=\frac{1}{u}\sum_{n=0}^{u-1}   \int \gamma \sum_{k=0}^{n} \phi(f^k(u)) \ du \\
&=  \int  \phi  \frac{1}{u}\sum_{n=0}^{u-1} \sum_{k=0}^{n}L^k\gamma \ dm\\
&=  \int  \phi  \sum_{k=0}^{u-1} \Big( 1- \frac{k}{u}\Big) \Big( \sum_{\lambda\in \Lambda}   \lambda^k \Phi_\lambda(\gamma) + K^k(1\gamma)\Big)  \ \ dm\\
&=  \int  \phi \Big(\sum_{k=0}^{u-1}K^k(\gamma)\Big) \ dm \\
& +\int \Big( - \frac{1}{u} \sum_{k=0}^{u-1}k K^k(\gamma)  
+  \sum_{\lambda\in \Lambda\setminus\{1\}} \big( \frac{1}{1-\lambda} 
+  \frac{1}{u} \frac{1}{1-\lambda} -\frac{1}{u}\frac{1-\lambda^{u+1}}{(1-\lambda)^2}   \big)  \Phi_\lambda(\gamma)    \Big)  \ dm.
  \end{align*}
  So 
\begin{align*}
&\lim_u T_u(\gamma)=   \int  \phi \Big(\sum_{k=0}^{\infty}K^k(\gamma) +  \sum_{\lambda\in \Lambda\setminus\{1\}}  \frac{1}{1-\lambda}   \Phi_\lambda(\gamma)    \Big)  \ dm\\
&=  \int \gamma  \sum_{n=0}^\infty \sum_{j=0}^{p-1} \phi\circ f^{pn+j} \ dm + G_\gamma,
\end{align*}
where 
$$G_\gamma= \int \phi \Big( \sum_{\lambda\in \Lambda\setminus\{1\}}  \frac{1}{1-\lambda}  \Phi_\lambda(\gamma)    \Big).  $$
Taking $\gamma=1_{[a,x]}$ we conclude the proof. 
\end{proof}

\subsection{Asymptotic variance and regularity of primitives}

\subsubsection{Eigenspaces and spectral projections} Note that $f$ have a finite number of absolutely continuous ergodic probabilities $\mu_\ell=\rho_\ell m$, with $\ell\leq E$ and $\rho_\ell\in BV$,  whose (pairwise disjoint) basins of attractions
$$A_\ell=\{x\in I \ s.t. \ \lim_{N\rightarrow \infty}   \frac{1}{N}  \sum_{k< N}  \theta\circ f^k(x)= \int \theta \ d\mu_\ell, \text{ for every } \theta\in C^0(I)    \}.$$
 covers $m$-almost every point in $I$. Let $$S_\ell= \{x\in I \ s.t. \ \rho_\ell(x) > 0\}\subset A_\ell$$ By Boyarsky and  G\'{o}ra\cite{bg} the set $S_\ell$ is a finite union of intervals up to a zero $m$-measure set. So indeed
 $$A_\ell = \cup_{n=0}^\infty f^{-i}S_\ell$$
 up to  set of zero Lebesgue measure. We have
 $$\Phi_1(\theta)= \sum_{\ell\leq E} \Big( \int_{A_\ell} \theta \ dm\Big)  \rho_\ell.$$
 \\

Let $\psi$ be  in the image of $\Phi_\lambda$, with $\lambda\in \Lambda$. Then $L\psi=\lambda \psi$ and $L|\psi|=|\psi|$. So $|\psi|$ is a non negative linear combination of $\rho_\ell$ , $\ell\leq E$.  Replacing $\psi$ by $\psi 1_{S_\ell}$ we may assume that $|\psi|$ is  a multiple of $\rho_\ell$.

Let $s(x)=\psi(x)/|\psi(x)|$  when $\psi(x)\neq 0$, or zero otherwise. One can see that $\lambda s(f(x))= s(x)$ $m$-almost everywhere. Reciprocally if $s$ is a function such that either $|s(x)|=1$ for $x\in S_\ell$, $s(x)=0$  otherwise, and $\lambda s(f(x))= s(x)$  almost everywhere, then  $L(s\rho_\ell)=\lambda s\rho_\ell$. So define
$$E_{\lambda,\ell}=\{ s\colon I \rightarrow \mathbb{C} \ s.t. \ supp \ s \subset  \ S_\ell \ and \ \lambda s\circ f = s \ on \ S_\ell\}.$$

The ergodicity of $\mu_\ell$ implies that $E_{\lambda,\ell}$ is either zero or one-dimensional. Let $\Lambda^\ell \subset \Lambda$ be such that $\lambda\in \Lambda^\ell$ if and only $\dim E_{\lambda,\ell} =1$. We have that $\Lambda^\ell$ is a finite subgroup of $\mathbb{S}^1$.  

By the previous considerations, if $\lambda\in \Lambda^\ell$  one can choose  an element of  $E_{\lambda,\ell}$, denoted $s_{\lambda,\ell}$, such that  $|s_{\lambda,\ell}|=1$ on $S_\ell$.  Indeed if $\beta$  is a generator of the cyclic group $\Lambda^\ell$, we can  choose $s_{\beta^n,\ell}= (s_{\beta,\ell})^n$ and $\{s_{\beta,\ell}\}_{\beta\in \Lambda^\ell}$ became a cyclic group isomorphic to $\Lambda^\ell$. In particular $s_{\lambda_1,\ell}s_{\lambda_2,\ell}= s_{\lambda_1\lambda_2,\ell}$ and $s_{1,\ell}=1_{S_\ell}$.

If  $s\in E_{\lambda,\ell}$ then  $s\circ f^{p(f)}=s$. So if $\nu$ is an ergodic component  of $\mu_\ell$ for $f^{p(f)}$ we have that $s$ is constant on $supp \ \nu$. By Boyarsky and  G\'{o}ra \cite{bg} the support of $\nu$  is a finite union of intervals up to a zero $m$-measure set. Since the support of the ergodic components  of $\mu_\ell$ cover the support of $\mu_\ell$ we conclude that  $s$ is piecewise constant on $S_\ell$ (and zero elsewhere)  and consequently $E_{\lambda,\ell}\subset BV$. 
So we conclude that 
\begin{equation}\label{cc}  \Phi_\lambda(BV)= \bigoplus_{\ell} \{     s\rho_\ell \colon \  s\in E_{\lambda,\ell}\}= <\{ s_{\lambda,\ell}  \}_{\ell\leq E}>.\end{equation} 
It is convenient to consider a  modification of $s_{\lambda,\ell}$. Define $\hat{s}_{\lambda,\ell}$ as equal to  $s_{\lambda,\ell}$ on $S_\ell$, equals to zero outside $A_\ell$ and  
$$\hat{s}_{\lambda,\ell}(x)=\lambda^n s_{\lambda,\ell}(f^n(x)),$$
where $n$ is some integer satisfying  $f^n(x)\in S_\ell$. It is easy to see that $\hat{s}_{\lambda,\ell}$ is well-defined, $|\hat{s}_{\lambda,\ell}|=1$ and $\lambda \hat{s}_{\lambda,\ell} \circ f = \hat{s}_{\lambda,\ell}$   on $I$. Of course  $\hat{s}_{\lambda,\ell}\in L^\infty(m)$, but it may not belong to $BV$ anymore.

Consider the semi positive definite Hermitian  form 
$$<\gamma_1,\gamma_2>_\ell= \int \gamma_1 \overline{\gamma}_2 \rho_\ell \ dm.$$
for every $\gamma_1,\gamma_2\in BV$. 
Let  $\phi\in BV$ be such that 
$$\int \phi \Phi_1(\gamma) \ dm =0.$$
This is equivalent to
$$\int \phi \rho_\ell \ dm=\int \phi  \hat{s}_{1,\ell} \rho_\ell \ dm=0$$
for every $\ell$. Then one  can find (using Gram–Schmidt process) constants $c_{\lambda,\ell}$   such that  the function  
$$P_\ell(\phi)=\phi 1_{A_\ell}  - \sum_{\lambda\in \Lambda\setminus\{1\}} c_{\lambda,\ell} \hat{s}_{\lambda,\ell}$$
is ortogonal to the subspace
$$\bigoplus_{\lambda\in \Lambda} E_{\lambda,\ell}$$
with respect to this Hermitian form. So we obtained the decomposition
\begin{equation}\label{dec} \phi = \sum_\ell P_\ell(\phi) +  \sum_{\lambda\in \Lambda\setminus\{1\}} c_{\lambda,\ell} \hat{s}_{\lambda,\ell}.\end{equation} 
Let $$P(\phi)=  \sum_{\ell}P_\ell(\phi).$$

\begin{lemma} \label{nulo} Let $\phi, \psi \in BV$ be such that 
$$\int \phi \Phi_1(\gamma) \ dm =0= \int \psi \Phi_1(\gamma) \ dm$$
for every $\gamma\in BV$.
Then for every $\gamma\in BV$ the following holds.
\begin{itemize} 
\item[A.]  For every $\phi, \gamma\in BV$
$$\int P(\phi) \Phi_\lambda(\gamma) \ dm=0.$$
\item[B.]  For $\beta\neq \overline{\lambda}$ we have 
$$\int \hat{s}_{\beta,\ell} \Phi_\lambda(\gamma) \ dm=0.$$
\item[C.]  We have  
$$\int \hat{s}_{\overline{\lambda},\ell} \Phi_\lambda(\gamma) \ dm=\int s_{\overline{\lambda},\ell} \gamma \ dm.$$
\item[D.] For every $\phi, \psi,\gamma\in BV$ and $\lambda\in \Lambda^\ell\setminus \{1\}$ we have
$$\int \phi \Phi_\lambda(\psi \Phi_\lambda(\gamma)) \ dm =0.$$
\end{itemize} 
\end{lemma}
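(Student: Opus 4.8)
The plan is to reduce all four items to the concrete description of $\Phi_\lambda(BV)$ in \eqref{cc}, the multiplicative group structure of the family $\{s_{\mu,\ell}\}_\mu$, and one averaging identity. The tools I will use are: the averaged representation $\Phi_\lambda(\gamma)=\lim_n\frac{1}{np}\sum_{i=0}^{np-1}\lambda^{-i}L^i\gamma$ (convergence in $BV$, $p=p(f)$) established in the proof of Lemma~\ref{uu}; the adjunction $\int (L^i\gamma)\,h\ dm=\int\gamma\,(h\circ f^i)\ dm$ and its consequence $\int L^i\gamma\ dm=\int\gamma\ dm$, already used in Lemma~\ref{kk}; the fact from \eqref{cc} that every $g\in\Phi_\lambda(BV)$ is of the form $g=\sum_\ell g_\ell\,s_{\lambda,\ell}\rho_\ell$, that for each fixed $\ell$ the family $\{s_{\mu,\ell}\}_{\mu\in\Lambda^\ell}$ is a finite multiplicative group with $s_{\mu_1,\ell}s_{\mu_2,\ell}=s_{\mu_1\mu_2,\ell}$ and $s_{1,\ell}=1_{S_\ell}$, hence $\overline{s_{\mu,\ell}}=s_{\bar\mu,\ell}$ on $S_\ell$; the facts that $\hat{s}_{\mu,\ell}$ coincides with $s_{\mu,\ell}$ on $S_\ell$, is supported in $A_\ell$ with modulus $1$ there, and satisfies $\hat{s}_{\bar\lambda,\ell}\circ f=\lambda\,\hat{s}_{\bar\lambda,\ell}$ on $I$ (so $\hat{s}_{\bar\lambda,\ell}\circ f^i=\lambda^i\hat{s}_{\bar\lambda,\ell}$); and, finally, the bookkeeping that the basins $A_\ell$ are pairwise disjoint mod~$0$, that $\rho_\ell$ and $s_{\mu,\ell}$ are supported on $S_\ell\subset A_\ell$ while $\hat{s}_{\mu,\ell}$ and $P_\ell(\phi)$ are supported on $A_\ell$, and that $\langle P_\ell(\phi),s_{\mu,\ell}\rangle_\ell=0$ for all $\mu\in\Lambda^\ell$ by the very construction of $P_\ell$.

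The first step is one computation: for $\beta\in\Lambda^\ell$ and $\lambda\in\Lambda^{\ell'}$,
\[
\int \hat{s}_{\beta,\ell}\,\bigl(s_{\lambda,\ell'}\rho_{\ell'}\bigr)\ dm=\delta_{\ell,\ell'}\,\delta_{\beta\lambda,1}.
\]
For $\ell\ne\ell'$ the integrand vanishes a.e.\ by disjointness of basins; for $\ell=\ell'$, on $S_\ell$ one has $\hat{s}_{\beta,\ell}s_{\lambda,\ell}=s_{\beta,\ell}s_{\lambda,\ell}=s_{\beta\lambda,\ell}$, and $\int s_{\mu,\ell}\rho_\ell\ dm=\delta_{\mu,1}$ since integrating $L(s_{\mu,\ell}\rho_\ell)=\mu\,s_{\mu,\ell}\rho_\ell$ against $dm$ gives $(\mu-1)\int s_{\mu,\ell}\rho_\ell\ dm=0$. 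Writing $\Phi_\lambda(\gamma)=\sum_{\ell'}d_{\ell'}(\gamma)\,s_{\lambda,\ell'}\rho_{\ell'}$ (only $\ell'$ with $\lambda\in\Lambda^{\ell'}$ contributing), this identity gives item B at once, because $\int\hat{s}_{\beta,\ell}\Phi_\lambda(\gamma)\,dm=\sum_{\ell'}d_{\ell'}(\gamma)\,\delta_{\ell,\ell'}\delta_{\beta\lambda,1}=0$ when $\beta\ne\bar\lambda$; and for $\beta=\bar\lambda$ it shows $\int\hat{s}_{\bar\lambda,\ell}\Phi_\lambda(\gamma)\,dm=d_\ell(\gamma)$, reducing item C to the identification of $d_\ell(\gamma)$. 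For that I will pair $\Phi_\lambda(\gamma)$ against $\hat{s}_{\bar\lambda,\ell}$ through the averaged representation: since $\hat{s}_{\bar\lambda,\ell}\circ f^i=\lambda^i\hat{s}_{\bar\lambda,\ell}$, the adjunction yields $\lambda^{-i}\int(L^i\gamma)\,\hat{s}_{\bar\lambda,\ell}\,dm=\lambda^{-i}\int\gamma\,(\hat{s}_{\bar\lambda,\ell}\circ f^i)\,dm=\int\gamma\,\hat{s}_{\bar\lambda,\ell}\,dm$ for every $i$, so every Ces\`aro average equals $\int\gamma\,\hat{s}_{\bar\lambda,\ell}\,dm$; since $\hat{s}_{\bar\lambda,\ell}=s_{\bar\lambda,\ell}$ on $S_\ell$, this yields the identity of item C.

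For item A, fix $\lambda$ and some $\ell'$ with $\lambda\in\Lambda^{\ell'}$ (for other $\ell'$ the coefficient of $s_{\lambda,\ell'}\rho_{\ell'}$ in $\Phi_\lambda(\gamma)$ is $0$). The components $P_{\ell''}(\phi)$ of $P(\phi)=\sum_{\ell''}P_{\ell''}(\phi)$ with $\ell''\ne\ell'$ vanish on $A_{\ell'}$, and $s_{\lambda,\ell'}\rho_{\ell'}$ is supported in $S_{\ell'}\subset A_{\ell'}$, so $\int P(\phi)\,s_{\lambda,\ell'}\rho_{\ell'}\,dm=\int P_{\ell'}(\phi)\,s_{\lambda,\ell'}\rho_{\ell'}\,dm=\int P_{\ell'}(\phi)\,\overline{s_{\bar\lambda,\ell'}}\,\rho_{\ell'}\,dm=\langle P_{\ell'}(\phi),s_{\bar\lambda,\ell'}\rangle_{\ell'}=0$, because $s_{\bar\lambda,\ell'}\in E_{\bar\lambda,\ell'}\subseteq\bigoplus_\mu E_{\mu,\ell'}$ and $P_{\ell'}(\phi)$ was constructed orthogonal to that subspace; summing against $\Phi_\lambda(\gamma)$ gives A. For item D, apply the coefficient formula from C to $g=\psi\,s_{\lambda,\ell'}\rho_{\ell'}\in BV$, which is supported in $S_{\ell'}$: only its $\ell'$-term survives, and $\Phi_\lambda(\psi\,s_{\lambda,\ell'}\rho_{\ell'})=\bigl(\int\hat{s}_{\bar\lambda,\ell'}\psi\,s_{\lambda,\ell'}\rho_{\ell'}\,dm\bigr)s_{\lambda,\ell'}\rho_{\ell'}=\bigl(\int\psi\,\rho_{\ell'}\,dm\bigr)s_{\lambda,\ell'}\rho_{\ell'}$, using $\hat{s}_{\bar\lambda,\ell'}s_{\lambda,\ell'}=s_{1,\ell'}=1$ on $S_{\ell'}$. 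But the hypothesis on $\psi$, that it annihilates $\Phi_1(BV)=\mathrm{span}\{\rho_\ell\}_\ell$, gives $\int\psi\,\rho_{\ell'}\,dm=0$ for every $\ell'$, so $\Phi_\lambda\bigl(\psi\,\Phi_\lambda(\gamma)\bigr)=\sum_{\ell'}d_{\ell'}(\gamma)\bigl(\int\psi\,\rho_{\ell'}\,dm\bigr)s_{\lambda,\ell'}\rho_{\ell'}=0$, whence $\int\phi\,\Phi_\lambda(\psi\,\Phi_\lambda(\gamma))\,dm=0$.

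I expect the only genuinely delicate point to be the identification of $d_\ell(\gamma)$ in item C: one must pair against the extended eigenfunction $\hat{s}_{\bar\lambda,\ell}$, which is the one satisfying the Koopman eigenvalue equation on all of $I$ (the unextended $s_{\bar\lambda,\ell}$ is killed outside $S_\ell$, so $s_{\bar\lambda,\ell}\circ f\ne\lambda s_{\bar\lambda,\ell}$ in general), and, since $\hat{s}_{\bar\lambda,\ell}$ need not lie in $BV$, the duality has to be routed through the Ces\`aro averages of $L^i$ rather than through an abstract transposed projection. All remaining steps are bookkeeping with the disjoint basins $A_\ell$ and the finite abelian groups $\Lambda^\ell$.
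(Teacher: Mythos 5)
Your proof is correct, and it runs along the same lines as the paper's: Ces\`aro averaging of $L^i$, the adjunction $\int (L^i\gamma)\,h\ dm=\int\gamma\,(h\circ f^i)\ dm$, the finite abelian group structure of $\{s_{\mu,\ell}\}_\mu$, the description of $\Phi_\lambda(BV)$ in \eqref{cc}, disjointness of basins, and the orthogonality built into $P_\ell$. The organizational differences are modest: for item~B you go straight through the spanning description with the explicit $\delta$-identity $\int\hat s_{\beta,\ell}(s_{\lambda,\ell'}\rho_{\ell'})dm=\delta_{\ell,\ell'}\delta_{\beta\lambda,1}$, while the paper gets B and C simultaneously from a single Ces\`aro computation and the fact that $\frac{1}{Np}\sum_{i<Np}(\beta\lambda)^{-i}\to \delta_{\beta\lambda,1}$; and for item~D you show the stronger fact $\Phi_\lambda(\psi\Phi_\lambda(\gamma))=0$ via the coefficient formula $d_j(\cdot)=\int(\cdot)\,\hat s_{\bar\lambda,j}\,dm$, whereas the paper decomposes $\phi$ by \eqref{dec} and checks $(1)$ and $(2)$ separately. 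Both are fine; yours buys the fact that D holds for arbitrary $\phi\in BV$, not just those killed by $\Phi_1$.

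One small point worth flagging. You correctly route the adjunction through $\hat s_{\bar\lambda,\ell}$ (the extension that satisfies $\hat s_{\bar\lambda,\ell}\circ f=\lambda\,\hat s_{\bar\lambda,\ell}$ on all of $I$), whereas the paper's own computation writes $s_{\beta,\ell}\circ f^i=\beta^{-i}s_{\beta,\ell}$, which is false on $A_\ell\setminus S_\ell$. As a consequence your derivation produces $\int\gamma\,\hat s_{\bar\lambda,\ell}\,dm$ on the right of item~C, not $\int\gamma\,s_{\bar\lambda,\ell}\,dm$ as the lemma is stated; for general $\gamma\in BV$ these are different. Your closing remark ``since $\hat s_{\bar\lambda,\ell}=s_{\bar\lambda,\ell}$ on $S_\ell$, this yields item C'' does not quite close the gap, since $\gamma$ is not supported in $S_\ell$. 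The reconciliation is that the statement of~C should read $\hat s_{\bar\lambda,\ell}$ on the right-hand side; with that correction your computation is exactly right, and in the only place~C is actually used (the proof of~D, where it is applied to $\gamma'=\psi\Phi_\lambda(\gamma)$, supported in $\cup_j S_j$) the two versions coincide, so nothing downstream is affected.
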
 
\begin{proof} By definition 
$$\int P_\ell(\phi)  s\rho_\ell \ dm=0$$
for every $s\in E_{\lambda,\ell}$, $\lambda\in \Lambda\setminus \{1\}$.
Due (\ref{cc}) this is equivalent to 
$$\int P(\phi) \Phi_\lambda(\gamma)  \ dm=0$$ 
for every $\gamma\in BV$. This proves $A$. \\

\noindent Note that the support of  $\Phi_\lambda(\gamma)$ is included in $\cup_\ell S_\ell$, so 

\begin{align*} \int \hat{s}_{\beta,\ell} \Phi_\lambda(\gamma) \ dm&=  \int s_{\beta,\ell} \Phi_\lambda(\gamma) \ dm  =  \lim_N \frac{1}{Np} \int s_{\beta,\ell} \sum_{i=0}^{Np-1} L^i(\gamma) \ dm\\
&=  \lim_N \frac{1}{Np} \int  \gamma \sum_{i=0}^{Np-1}  \frac{s_{\beta,\ell}\circ f^i}{\lambda^i}  \ dm\\
&=  \lim_N \frac{1}{Np} \int  \gamma  s_{\beta,\ell} \sum_{i=0}^{Np-1}   \frac{1}{(\beta\lambda)^i} \ dm.
\end{align*}
Since 
$$\lim_N \frac{1}{Np}  \sum_{i=0}^{Np-1}  \frac{1}{(\beta\lambda)^i}$$
is $1$ if $\beta=\overline{\lambda}$ and $0$ otherwise, we obtained  $B.$ and $C.$ 

To show $D.$ fix $\gamma\in BV$ and $\lambda \in \Lambda^\ell$. The function 
$$(\phi,\psi)\mapsto \int \phi \Phi_\lambda(\psi \Phi_\lambda(\gamma)) \ dm$$
is bilinear. Applying the decomposition (\ref{dec}) to $\phi$  one can see that is enough to show that  the expressions 
\begin{enumerate}
\item $\int P(\phi) \Phi_\lambda(\psi \Phi_\lambda(\gamma)) \ dm$,
\item $\int \hat{s}_{\beta,\ell} \Phi_\lambda(\psi \Phi_\lambda(\gamma)) \ dm$, with $\beta\in \Lambda^\ell$,
\end{enumerate} 
are both zero. By $A.$ we have that $(1)$ vanishes. $B.$ implies that $(2)$ is also zero for $\beta\neq \overline{\lambda}$. Let's consider the case $\beta=\overline{\lambda}$. Then $C.$ implies
$$\int \hat{s}_{\overline{\lambda},\ell} \Phi_\lambda(\psi \Phi_\lambda(\gamma)) \ dm = \int  s_{\overline{\lambda},\ell}\psi \Phi_\lambda(\gamma) \ dm. $$
Since  $\Phi_\lambda(\gamma)$ is a linear combination of elements of $\{s_{\lambda,j}\rho_j\}_{j\leq E}$, it is enough to show that 
$$ \int  s_{\overline{\lambda},\ell}\psi s_{\lambda,j}\rho_j \ dm=0 $$
for every $j$. This is obvious  for  $j\neq \ell$, since in this case  the support of $\rho_j$ is disjoint from the support of $s_{\overline{\lambda},\ell}$. For $j=\ell$ we have that $s_{\overline{\lambda},\ell} s_{\lambda,\ell}=1$ on $S_\ell$, so
$$ \int  s_{\overline{\lambda},\ell}\psi s_{\lambda,\ell}\rho_\ell \ dm = \int  \psi \rho_\ell \ dm =0.$$
\end{proof}

\subsubsection{Asymptotic variance and Livsic cohomological equation} Given a not necessarily invariant probability $\mu$, define 
$$\sigma^2_\mu(\phi)=\lim_{N \rightarrow \infty}  \int \Big| \frac{\sum_{i=0}^{N-1} \phi\circ f^i }{\sqrt{N}}   \Big|^2 \ d\mu.$$
whenever this limit exists. Note that  $\sigma^2_m(\phi)$  is similar to the usual asymptotic variance of $\phi$, but it is not quite the same since $m$ is not necessarily an invariant measure. \\
 
 If  $\phi$ is a piecewise $C^\beta$ function  then it has finite $1/\beta$ bounded variation. In particular it belongs to the space of generalised bounded variations $BV_{1,1/\beta}$ as defined by  Keller \cite[Theorem 3.3]{keller}. In our setting Keller proved that the transfer operator $L$ satisfies the Lasota-Yorke inequality for the pair $(L^1(m), BV_{1,1/\beta})$.  Consequently we can decompose $L$ as in (\ref{ii}), so we keep this same notation for $L$ acting on $BV_{1,1/\beta}$. 
 
 By Keller \cite[Theorem 3.3]{keller} (see also Broise \cite{broise})  we have that  $\sigma^2_{\mu_\ell}(\phi)$ is well-defined and 
\begin{align*} &\sigma^2_{\mu_\ell} (\phi)\\
&=  \lim_{N}  \frac{1}{N}  \int  \Big( 2Re(\sum_{k=0}^{N-1} (N-k)\phi\circ f^k \overline{\phi})   -  N  |\phi|^2   \Big) \ d\mu_\ell, \numberthis \label{vari} \\
&=  2Re\Big(\sum_{\lambda\in \Lambda\setminus \{1\}} \frac{1}{1-\lambda} \int  \phi \Phi_\lambda(\overline{\phi}) \ d\mu_\ell  + \sum_{i=0}^\infty  \int \phi K^i(\overline{\phi}) \ d\mu_\ell \Big)  - \int  |\phi|^2  \  d\mu_\ell.   \end{align*} 

The following result is a generalization of well-known results on asymptotic variance for invariant measures (see Broise \cite{broise}). The main difference is that we obtain regularity results in the whole phase space and not just on the support of an invariant measure. 

\begin{theorem}\label{cohomo}  Let $f\in \mathcal{B}^{1+BV}_{exp}(C)$ expanding map on the interval $I=[a,b]$. 
Let $p$ be  a multiplier of $p(f).$   Let $\phi$ and $\psi$   be functions  in $\mathcal{B}^\beta(C)$, with $\beta \in (0,1)$,   such that 
$$\int \phi \Phi_1(\gamma) \ dm =\int \psi \Phi_1(\gamma) \ dm=0$$
for every $\gamma\in BV$.  Then the limit 
$$\sigma_m(\phi,\psi)=  \lim_{N \rightarrow \infty}  \int \Big( \frac{\sum_{i=0}^{N-1} \phi\circ f^i }{\sqrt{N}}   \Big)\Big( \frac{\sum_{i=0}^{N-1} \overline{\psi} \circ f^i }{\sqrt{N}}   \Big)   \ dm$$
exists. In particular $\sigma^2_m(\phi)= \sigma_m(\phi,\phi)$ is well-defined and   $\sigma_m$ is a positive semidefinite Hermitian form.  We also have
\begin{equation}\label{variance}  \sigma^2_m(\phi) =\sum_\ell m(A_\ell) \sigma^2_{\mu_\ell} (\phi).\end{equation} 
Furthermore the   following statements  are equivalent 
\begin{itemize}
\item[A.] $\sigma^2_m(\phi)=0$.
\item[B.] We have
$$\sup_{N} |\sum_{i=0}^{N-1} \phi\circ f^i   |_{L^2(m)}< \infty.$$
\item[C.]  There is $g\in L^2(m)$ such that $g$ is the weak limit in $L^2(m)$  of the sequence
$$T_M(\phi)=-\frac{1}{M} \sum_{N=0}^{M-1} \sum_{k=0}^{N-1}  \phi\circ f^{k}.$$
In particular the function
\begin{equation} \label{rew} \alpha(x) = \lim_{M\rightarrow \infty}   - \frac{1}{M} \sum_{N=0}^{M-1}   \int 1_{[a,x]} \Big( \sum_{k=0}^{N-1}  \phi\circ f^{k}\Big) \ dm\end{equation} 
is absolutely continuous, $1/2$-H\"older continuous,  and its derivative is  $g$.
\item[D.]  There is $g  \in L^2(m)$  that satisfies
\begin{equation*}\phi = g \circ f -g\end{equation*}
$m$-almost everywhere in $I$.
\item[E.] For every $\ell\leq E$ there is $g_\ell\in L^\infty(m)$ such that 
\begin{equation*}\phi = g_\ell \circ f -g_\ell\end{equation*}
on $S_\ell$.
\end{itemize}
Moreover $A-E$ implies 
\begin{itemize}
\item[F.] For every periodic point $q\in  \hat{I}\cap \overline{S}_\ell$, with $\ell\leq E$ and $f^m(q)=q$ we have 
$$\sum_{j=0}^{m-1} \phi(f^j(q))=0.$$
Note that we need to consider lateral limits if $\phi$ is not continuous at some points in the orbit of $q$.
\end{itemize} 
\end{theorem}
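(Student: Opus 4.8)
Throughout write $S_N(\phi)=\sum_{i=0}^{N-1}\phi\circ f^i$, and recall that the Lasota--Yorke/spectral decomposition (\ref{ii}) and (consequently) Lemmas \ref{uu}, \ref{kk} and \ref{nulo} are available on Keller's space $BV_{1,1/\beta}\supset\mathcal B^{\beta}(C)$ just as on $BV$. The plan is to first get existence of $\sigma_m$ together with (\ref{variance}), and then to run the cycle $A\Leftrightarrow\cdots\Leftrightarrow E$ and deduce $F$. For the first part, expand
\[
\int S_N(\phi)\,\overline{S_N(\psi)}\,dm=\sum_{i<N}\int(\phi\overline\psi)\circ f^i\,dm+2\,\mathrm{Re}\sum_{k\ge 1}\ \sum_{j<N-k}\int\big((\phi\circ f^k)\,\overline\psi\big)\circ f^j\,dm ,
\]
push the outer iterate onto the reference density via $\int(H\circ f^j)\,dm=\int L^j(1)\,H\,dm$, and substitute $L^j(1)=\Phi_1(1)+\sum_{\lambda\ne 1}\lambda^j\Phi_\lambda(1)+K^j(1)$. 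After dividing by $N$ and letting $N\to\infty$: the $K^j(1)$ pieces are summable (spectral radius $<1$) and the $\lambda^j$ pieces have bounded partial sums, so both drop out; only the $\Phi_1(1)$ term survives, weighted by the Ces\`aro kernel $1-k/N$. The hypothesis $\int\phi\,\Phi_1(\gamma)\,dm=\int\psi\,\Phi_1(\gamma)\,dm=0$ kills every term still carrying a $\Phi_1$ in the \emph{inner} position, and Lemma \ref{nulo}(A)--(D) kills the resonant cross-terms $\int\phi\,\Phi_\lambda\big(\overline\psi\,\Phi_\lambda(\gamma)\big)\,dm$ that would otherwise accumulate linearly in $k$. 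Using $\Phi_1(1)=\sum_\ell m(A_\ell)\rho_\ell$, what remains is $\sum_\ell m(A_\ell)$ times the right-hand side of (\ref{vari}) (the diagonal sum producing the $-\int|\phi|^2\,d\mu_\ell$ once the $k=0$ normalisation of the $K$-series is accounted for, the off-diagonal sum producing the $\mathrm{Re}$-sum). This yields existence, sesquilinearity and (\ref{variance}) simultaneously; Hermitian symmetry follows by conjugating the integral, and $\sigma_m(\phi,\phi)=\lim_N N^{-1}|S_N(\phi)|_{L^2(m)}^2\ge 0$ gives positive semidefiniteness.

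For the equivalences I would prove $B\Rightarrow A\Rightarrow E\Rightarrow D\Rightarrow B$ and $D\Leftrightarrow C$. Here $B\Rightarrow A$ is immediate. For $A\Rightarrow E$: by (\ref{variance}) and $m(A_\ell)>0$ one gets $\sigma^2_{\mu_\ell}(\phi)=0$ for each $\ell$; now apply the decomposition (\ref{dec}) on $S_\ell$, writing $\phi|_{S_\ell}=P_\ell(\phi)+\sum_{\lambda\ne1}c_{\lambda,\ell}s_{\lambda,\ell}$ where $P_\ell(\phi)$ is the $L^2(\mu_\ell)$-orthogonal projection of $\phi$ onto the complement of the peripheral eigenfunctions. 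Each $s_{\lambda,\ell}$ with $\lambda\ne1$ is an $L^\infty$ coboundary, $s_{\lambda,\ell}=(\overline\lambda-1)^{-1}(s_{\lambda,\ell}\circ f-s_{\lambda,\ell})$, so by bilinearity $\sigma^2_{\mu_\ell}(\phi)=\sigma^2_{\mu_\ell}(P_\ell(\phi))$ and it suffices to cobound $P_\ell(\phi)$. Since $P_\ell(\phi)\in\mathrm{Im}\,K_\ell$ ($L_\ell$ being the transfer operator of $(f,\mu_\ell)$), $g_\ell:=\sum_{k\ge1}K_\ell^k P_\ell(\phi)=\sum_{k\ge1}L_\ell^k P_\ell(\phi)$ converges in $BV$, and the Gordin-type reverse-martingale splitting $P_\ell(\phi)=\psi_\ell+g_\ell\circ f-g_\ell$ with $L_\ell\psi_\ell=0$ gives $\sigma^2_{\mu_\ell}(P_\ell(\phi))=|\psi_\ell|_{L^2(\mu_\ell)}^2$; hence $\psi_\ell=0$ and $\phi=g_\ell\circ f-g_\ell$ on $S_\ell$ with $g_\ell\in L^\infty$, i.e. $E$. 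For $E\Rightarrow D$: glue the $g_\ell$ over $\bigcup_\ell A_\ell$ (a full-measure set) by following each orbit to its first entrance time $\tau$ into $S_\ell$ and setting $g=g_\ell\circ f^{\tau}-\sum_{0\le j<\tau}\phi\circ f^{j}$ there; exponential tails of $\tau$ restricted to $A_\ell$ with respect to $m$ (a standard consequence of the spectral picture) put $g\in L^2(m)$, with $\phi=g\circ f-g$ $m$-a.e.

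For $D\Rightarrow B$: $S_N(\phi)=g\circ f^N-g$ and $|g\circ f^N|_{L^2(m)}^2=\int L^N(1)|g|^2\,dm\le(\sup_N|L^N 1|_{L^\infty})\,|g|_{L^2(m)}^2$, so $(|S_N(\phi)|_{L^2})_N$ is bounded. For $D\Leftrightarrow C$: from $D$, $T_M(\phi)=-M^{-1}\sum_{N<M}S_N(\phi)=g-M^{-1}\sum_{N<M}g\circ f^N$, and $M^{-1}\sum_{N<M}g\circ f^N$ converges weakly in $L^2(m)$ to an $f$-invariant $\bar g$ (test against $h$: $M^{-1}\sum_N\int L^N(h)\,g\,dm\to\int\Phi_1(h)\,g\,dm$), so $T_M(\phi)\rightharpoonup g-\bar g=:g_0\in L^2(m)$ with $\phi=g_0\circ f-g_0$; testing the weak convergence against $1_{[a,x]}$ identifies $\alpha(x)=\int 1_{[a,x]}\,g_0\,dm$, which is absolutely continuous with derivative $g_0$ and $1/2$-H\"older by Cauchy--Schwarz; conversely the weak limit $g$ in $C$ satisfies $\phi=g\circ f-g$, which is $D$. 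Finally $A$--$E\Rightarrow F$: the series $g_\ell=\sum_{k\ge1}K_\ell^k P_\ell(\phi)$ in fact converges in $\mathcal B^{\beta}(C)$ and the $s_{\lambda,\ell}$ are piecewise constant, so the transfer function has one-sided limits everywhere and the cohomological identity holds for every one-sided value $x^\star\in\hat I$; telescoping along the period-$m$ orbit of $q$ gives $\sum_{j=0}^{m-1}\phi(f^j(q))=g(f^m(q)^\star)-g(q^\star)=0$.

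The main obstacle is $A\Rightarrow E$ together with the regularity bookkeeping it feeds: one must isolate the genuinely aperiodic part of $\phi$ on each $S_\ell$ (this is precisely why the projectors $\Phi_\lambda$, the eigenfunctions $s_{\lambda,\ell}$, $\hat s_{\lambda,\ell}$, the decomposition (\ref{dec}) and the orthogonality relations of Lemma \ref{nulo} were set up), run the martingale/transfer-operator coboundary construction on each ergodic component while controlling the period-$p$ peripheral spectrum, and then produce a transfer function regular enough (bounded, with one-sided limits) both to be glued across basins into an $L^2(m)$ coboundary and to be evaluated along periodic orbits for $F$. By comparison the existence computation, though lengthy, is a routine Ces\`aro-averaging argument once Lemma \ref{nulo} is in hand.
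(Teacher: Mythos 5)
Your treatment of the existence of $\sigma_m$ and of formula (\ref{variance}) is essentially the paper's own computation (push everything onto the reference density via $L$, insert the splitting (\ref{ii}), and use Lemma \ref{nulo}.D to kill the resonant peripheral terms), and your cycle for the equivalences $A$--$E$ is sound but routed differently: the paper goes $A\Rightarrow B$ (Broise's Lemma 6.2), $B\Rightarrow C$ (via Theorem \ref{conv2} and a weak-limit argument which in passing yields $D$), $D\Rightarrow E$ (Broise's Lemme 6.6 run in Keller's space), and $E\Rightarrow A$; you instead prove $A\Rightarrow E$ directly by the Gordin martingale--coboundary decomposition on each ergodic component after stripping the peripheral eigenfunctions with (\ref{dec}), then $E\Rightarrow D$ by gluing along first-entrance times into $S_\ell$ (the exponential tail you invoke does hold, since every $\Phi_\lambda(1)$ is supported in $\bigcup_\ell S_\ell$ by (\ref{cc}), so $m(\tau>n)$ is controlled by $\int_{I\setminus \bigcup_\ell S_\ell}K^n(1)\,dm$), and finally $D\Rightarrow B\Rightarrow A$ and $D\Leftrightarrow C$. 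That part is fine and arguably cleaner than the paper's chain.

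The genuine gap is in $F$. Your argument rests on the claim that $g_\ell=\sum_{k\ge 1}K_\ell^{k}P_\ell(\phi)$ converges in $\mathcal{B}^{\beta}(C)$, hence is piecewise H\"older with one-sided limits everywhere, so that the identity $\phi=g_\ell\circ f-g_\ell$ holds at every lateral value in $\hat{I}$ and can be telescoped along the periodic orbit. The convergence claim is false: $L$ (hence $K$) does not preserve $\mathcal{B}^{\beta}(C)$ --- the $k$-th term is piecewise H\"older only relative to a discontinuity set growing with $k$ inside the forward orbit of $\hat{C}$, which is typically dense, so the sum cannot lie in $\mathcal{B}^{\beta}(C')$ for any finite $C'$. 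What you do get is convergence in Keller's space $BV_{1,1/\beta}$, but membership there is an $L^1$-oscillation condition and does not by itself provide one-sided limits at every point (slowly oscillating functions such as $x\mapsto \sin(\log\log(1/x))$ near an endpoint are compatible with it). Consequently the passage from the $m$-a.e.\ cohomological identity to an identity between lateral limits along a periodic orbit --- an orbit that may pass through $\hat{C}$, where one must also track which side each monotone branch sends which side --- is precisely the delicate step, and it is left unproved. The paper avoids it altogether: from $E$ it builds the bi-Lipschitz coordinate change $h(x)=\int 1_{[a,x]}e^{\hat{g}}\,dm$, checks that $\hat{f}=h\circ f\circ h^{-1}$ is piecewise $C^{1+\beta}$ with $D\hat{f}=e^{\phi\circ h^{-1}}\,Df\circ h^{-1}$, and then identifies the multipliers of the corresponding periodic orbits of $f$ and $\hat{f}$ through the length asymptotics of the dynamically defined intervals $I_j$, which gives $\prod_{j=0}^{m-1}e^{\phi(f^{j}(q))}=1$ without any pointwise lateral regularity of the transfer function (the paper's remark that it could not prove the converse of $F$ is a further sign of how delicate this pointwise information is). To salvage your route you would need to prove that the transfer function admits a regulated representative compatible with the a.e.\ identity at the one-sided points $f^{j}(q)^{\pm}$; as written, the telescoping step does not follow.
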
 

\begin{rmk} We believe that $F$ is indeed equivalent to $A$-$E$, but we did not manage to prove it. 
\end{rmk} 

\begin{proof} To study  $\sigma^2_m(\phi)$ we will use methods similar to the study of the usual  asymptotic variance (see  Broise \cite{broise}), however the non invariance of $m$  turns things a little more cumbersome.  Note that
$$\int \phi \ d\mu_\ell=0=\int \psi \ d\mu_\ell$$
for every $\ell$.  Denote
$$\sigma_{m,N}(\phi,\psi)=   \int \Big( \frac{\sum_{i=0}^{N-1} \phi\circ f^i }{\sqrt{N}}   \Big)\Big( \frac{\sum_{i=0}^{N-1} \overline{\psi} \circ f^i }{\sqrt{N}}   \Big)   \ dm.$$
Of course $\sigma_N$
is linear in $\phi$ and antilinear in $\psi$ and consequently   it satisfies the polarization identity 
$$\sigma_{m,N}(\phi,\psi)= \frac{1}{4} ( \sigma^2_{m,N}(\phi+\psi)-  \sigma^2_{m,N}(\phi-\psi)+ i  \sigma^2_{m,N}(\phi+i \psi)- i \sigma^2_{m,N}(\phi-i\psi),$$
so it is enough to show that $\sigma_m^2(\phi)= \lim_N \sigma_{m,N}^2(\phi)$ exists.  Note that 
\begin{align*} &\int \Big| \frac{\sum_{i=0}^{N-1} \phi\circ f^i}{\sqrt{N}}   \Big|^2   \ dm \\
  &= \frac{1}{N}   \int  \sum_{i< N} \sum_{j < N}  \phi\circ f^i  \  \overline{\phi}\circ f^j\ dm \\
   &=  \frac{1}{N}  \int  \Big( 2 Re\big(\sum_{i\leq j< N}  \phi\circ f^i  \  \overline{\phi}\circ f^j\big)   -  \sum_{i< N} |\phi|^2 \circ f^i  \Big) \ dm \\
   &= \frac{1}{N}   \int  \Big( 2 Re\big(\sum_{i\leq j< N}  \phi L^{j-i}( \overline{\phi} L^i1_I)\big)   -  \sum_{i< N}  |\phi|^2 L^i 1_I \Big) \ dm\\
      &= \frac{1}{N}   \int  \Big( 2 Re\big(\sum_{i< N} \sum_{k=0}^{N-1-i} \phi L^{k}( \overline{\phi} L^i1_I)\big)   -  \sum_{i< N}  |\phi|^2 L^i 1_I \Big) \ dm\\
            &= \frac{1}{N}   \int  \Big( 2 Re\big(\sum_{i< N} \sum_{k=0}^{N-1-i} \phi L^{k}( \overline{\phi} \Phi_1(1_I))\big)   -  \sum_{i< N}  |\phi|^2 L^i  \Phi_1(1_I)  \Big) \ dm+R_N\\
            &= \frac{1}{N}   \int  \sum_{\ell=1}^E m(A_\ell) \Big( 2 Re\big(\sum_{i< N} \sum_{k=0}^{N-1-i} \phi L^{k}( \overline{\phi} \rho_\ell)\big)   -  \sum_{i< N}  |\phi|^2  \rho_\ell \Big) \ dm+ R_N\\
                 &= \frac{1}{N}   \sum_{\ell=1}^E   m(A_\ell) \int  \Big( 2 Re\big( \sum_{k=0}^{N-1} (N-k)\phi\circ f^k \overline{\phi}\big)   -  N  |\phi|^2  \Big) \ d\mu_\ell+ R_N,
              \numberthis \label{mn} 
  \end{align*} 
where  
\begin{align*} N R_N&=    \int  \Big( 2 Re\Big( \sum_{i< N} \sum_{k=0}^{N-1-i} \phi  \Big( \sum_{\hat{\lambda} \in \Lambda\setminus\{1\}} \hat{\lambda}^k \Phi_{\hat{\lambda}} \Big( \overline{\phi} \cdot \big( \sum_{\lambda \in \Lambda\setminus \{1\}} \lambda^i \Phi_\lambda(1_I) \big)  \Big) \Big) \Big) \ dm \\
&+ \int  \Big( 2 Re\big( \sum_{i< N} \sum_{k=0}^{N-1-i} \phi K^{k}\Big( \overline{\phi} \cdot \big( \sum_{\lambda \in \Lambda\setminus \{1\}} \lambda^i \Phi_\lambda(1_I) \big)  \Big)\Big)  \ dm \\
&+  \int  \Big( 2Re\big(\sum_{i< N} \sum_{k=0}^{N-1-i} \phi \Big( \sum_{\lambda\in \Lambda} \lambda^k \Phi_\lambda \Big( \overline{\phi} \cdot \big(  K^i(1_I) \big)  \Big) \Big) \Big) dm\\
&+  \int  \Big( 2 Re\Big( \sum_{i< N} \sum_{k=0}^{N-1-i} \phi K^{k}\Big( \overline{\phi} \cdot \big(  K^i(1_I) \big) \Big) \Big)  dm\\
&  -  \sum_{i< N} |\phi|^2 \cdot \Big(  \sum_{\lambda \in \Lambda\setminus \{1\}} \lambda^i \Phi_\lambda(1_I)  \Big)   \ dm \\
& -  \sum_{i< N} |\phi|^2 \cdot K^i(1_I)   \ dm.  \end{align*} 
We have
\begin{align*}   & \int  \Big( \sum_{i< N} \sum_{k=0}^{N-1-i} \phi  \Big( \sum_{\hat{\lambda} \in \Lambda\setminus \{1\}} \hat{\lambda}^k \Phi_{\hat{\lambda}} \Big(\overline{\phi}  \cdot \big( \sum_{\lambda \in \Lambda\setminus \{1\}} \lambda^i \Phi_\lambda(1_I) \big)  \Big) \Big) \ dm \\
&=    \int  \Big( \sum_{i< N}  \sum_{\hat{\lambda} \in \Lambda\setminus \{1\}}  \Big(  \sum_{k=0}^{N-1-i}  \hat{\lambda}^k \Big) \phi  \cdot \Big(  \Phi_{\hat{\lambda}} \Big(\overline{\phi} \cdot \big( \sum_{\lambda \in \Lambda\setminus \{1\}} \lambda^i \Phi_\lambda(1_I) \big)  \Big) \Big) \ dm  \\
&=    \int  \Big( \sum_{i< N}  \sum_{\hat{\lambda} \in \Lambda\setminus \{1\}}  \frac{1-\hat{\lambda}^{N-i}}{1-\hat{\lambda} }  \phi  \cdot   \Big(  \Phi_{\hat{\lambda}} \Big(\overline{\phi} \cdot \big( \sum_{\lambda \in \Lambda\setminus \{1\}} \lambda^i \Phi_\lambda(1_I) \big)  \Big) \Big) \ dm  \\
&=  \sum_{\hat{\lambda} \in \Lambda\setminus \{1\}}  \sum_{\lambda \in \Lambda\setminus \{1\}}   \frac{1}{1-\hat{\lambda} }  \frac{1-\lambda^N}{1-\lambda } \int  \phi  \cdot   \Big(  \Phi_{\hat{\lambda}} \Big(\overline{\phi}  \cdot \big( \Phi_\lambda(1_I) \big)  \Big)\ dm  \\
&+   \sum_{\hat{\lambda} \in \Lambda\setminus \{1\}} \sum_{\hat{\lambda} \in \Lambda\setminus \{1\}} - \frac{\hat{\lambda}^{N}}{1-\hat{\lambda} } \Big( \sum_{i< N}  \big( \frac{\lambda}{\hat{\lambda}}\big)^i \Big)   \int  \phi   \Big(  \Phi_{\hat{\lambda}} \Big( \overline{\phi} \cdot \big(  \Phi_\lambda(1_I) \big)  \Big)  \ dm  \\
&=   -N\sum_{\lambda \in \Lambda\setminus \{1\}}    \frac{\lambda^{N}}{1-\lambda }   \int  \phi   \Big(  \Phi_{\lambda} \Big(\overline{\phi}  \cdot \big(  \Phi_\lambda(1_I) \big)  \Big)  \ dm+ O(1)\\
&=O(1).\end{align*} 

The last passage follows from Lemma \ref{nulo}.D. A careful analysis of the (simpler) remaining  terms of $NR_N$  gives us
\begin{equation}\label{vbvb} NR_N= O(1). \end{equation} 
Moreover note that 
\begin{equation}\label{eee} \lim_N \frac{1}{N}   \sum_{\ell=1}^E   m(A_\ell) \int  \Big( 2Re\Big(\sum_{k=0}^{N-1} (N-k)\phi\circ f^k \overline{\phi} \Big)  -  N |\phi|^2 \Big) \ d\mu_\ell =\sum_\ell m(A_\ell) \sigma^2_{\mu_\ell} (\phi).\end{equation} 
So (\ref{mn}) and (\ref{vbvb})  imply
\begin{align*}& \sigma^2_m(\phi)=\lim_{M\rightarrow \infty} \frac{1}{M} \sum_{N< M}   \int \Big( \frac{\sum_{i=0}^{N-1} \phi\circ f^i }{\sqrt{N}}   \Big)^2 \ dm\\
&= \sum_\ell m(A_\ell) \sigma^2_{\mu_\ell} (\phi)+ \lim_M \frac{1}{M} \sum_{N< M} \sum_{\hat{\lambda} \in \Lambda\setminus \{1\}}    \frac{\hat{\lambda}^{N}}{1-\hat{\lambda} }   \int  \phi  \cdot  \Big(  \Phi_{\lambda} \Big( \phi \cdot \big(  \Phi_\lambda(1_I) \big)  \Big)  \ dm\\
&= \sum_\ell m(A_\ell) \sigma^2_{\mu_\ell} (\phi)+ \lim_M \frac{1}{M} \sum_{\hat{\lambda} \in \Lambda\setminus \{1\}}    \frac{1-\hat{\lambda}^{M}}{1-\hat{\lambda} }   \int  \phi  \cdot  \Big(  \Phi_{\lambda} \Big( \phi \cdot \big(  \Phi_\lambda(1_I) \big)  \Big)  \ dm\\
&= \sum_\ell m(A_\ell) \sigma^2_{\mu_\ell} (\phi).
\end{align*} 
This proves that $\sigma^2(\phi)$ is well defined.  \\

\noindent $A\implies B.$ If $A.$ holds, then $\sigma^2_{\mu_\ell}(\phi)=0$ for every $\ell$ and we can  use  the same method as in Broise \cite[Lemma 6.2]{broise} to prove that $B.$ holds. \\

\noindent $B\implies C.$ We use the same methods as Broise \cite{broise}. Theorem  \ref{conv2} imply \begin{align*} & \lim_{M} \frac{1}{M} \sum_{n=0}^{M-1}  \int  \gamma \cdot \Big( \sum_{k=0}^{N}  \phi\circ f^{k} \Big)\ dm\\
& =  \int \gamma \sum_{N=0}^\infty \sum_{j=0}^{p-1} \phi\circ f^{pn+j} \ dm 
\end{align*} 
for every $\gamma\in BV$. But $B.$ implies 
$$\sup_M  | \frac{1}{M} \sum_{n=0}^{M-1} \sum_{k=0}^{N}  \phi\circ f^{k}|_{L^2(m)}< \infty.$$
Since $BV$ is dense in $L^2(m)$ we conclude that there is $g \in L^2(m)$ such that 
$$w-lim_M -\frac{1}{M} \sum_{n=0}^{M-1} \sum_{k=0}^{N-1}  \phi\circ f^{k}  = g,$$
where $w-lim$ denotes the limit in the weak topology of $L^2(m)$. 
 Note that $T_M(\phi\circ f)= T_M(\phi)\circ f$.  For every $w\in BV$ we have
\begin{align*}  &\int ( L \gamma-\gamma) \cdot T_M(\phi)  \ dm \\
&= \int \gamma\cdot (T_M(\phi)\circ f - T_M(\phi)) \ dm \\
&= \int  \gamma\cdot  (\frac{1}{M} \sum_{N< M}  (\phi- \phi\circ f^{N})) \ dm \\
&=\int \gamma \phi  dm - \frac{1}{M}   \int  \gamma\cdot  ( \sum_{N< M}  \phi\circ f^{N} \ dm) \ dm
\end{align*} 
Taking the limit on $M$ we obtain
\begin{align*}  &\int  \gamma (g\circ f - g) \ dm = \int ( L \gamma-\gamma) \cdot g  \ dm =\int \gamma \phi  \ dm.
\end{align*} 
For every $\gamma\in BV$. It easily follows that $g\circ f- g=\phi$.\\ \\
\noindent $D\implies E.$  Consider the spaces $BV_{p,\beta}$ as in Keller \cite{keller}.  Since $\rho_\ell\in BV$ and $\inf_{S_\ell} \rho_\ell > 0$ (see Boyarsky and  G\'{o}ra \cite[Proposition 8.2.3]{bg} ) we have  $1_{S_\ell}/\rho_\ell \in BV\subset  BV_{1,1}\subset BV_{1,\beta}$.  Since $BV_{1,\beta}$ is a Banach algebra (Saussol \cite[Proposition 3.4]{Saussol}) we can consider the normalised transfer operator
$$P\colon BV_{1,\beta} \rightarrow BV_{1,\beta}$$ given by
$$P(w)=   \frac{1_{S_\ell}}{\rho_\ell}  L(w\rho_\ell).$$

Using the same argument as in Broise \cite[Lemme 6.6]{broise} with the transfer operator acting on  $BV_{1,\beta}$ instead of acting on $BV$ one can prove that $g \rho_\ell \in BV_{1,\beta}$. Since   $1_{S_\ell}/\rho_\ell \in  BV_{1,\beta}$ and  $BV_{1,\beta}$ is a Banach algebra  we get   $1_{S_\ell} g \in BV_{1,\beta}\subset L^\infty(m)$. This completes the proof. \\

\noindent $E\implies A.$ It is enough to show that $\sigma^2_{\mu_\ell}(\phi)=0$ for every $\ell$. This  is an easy and well-known argument. \\

\noindent $A-E\implies F$. Fix $\ell_0\leq E$ and consider $g_{\ell_0}$ as in $E$.  Define the function $\hat{g}\colon I \rightarrow \mathbb{R}$ as $1$ outside $\overline{S}_{\ell_0}$ and equals to $g_{\ell_0}$ inside $\overline{S}_{\ell_0}$. Recall that $\overline{S}_{\ell_0}$ is a finite union of intervals. Then
$$h(x)=\int 1_{[a,x]} e^{\hat{g}} \ dm$$
is a Lipschitz function with a Lipschitz inverse.  Define
$$\hat{f}\colon h(\overline{S}_{\ell_0})\rightarrow h(\overline{S}_{\ell_0})$$
by
$$\hat{f}=    h\circ f\circ h^{-1}$$
At first glance  one can see that $\hat{f}$ is piecewise Lipschitz. We claim that $\hat{f}$ is indeed piecewise $C^{1+\beta}$. Note that 
\begin{align*} D\hat{f}(x)&= Dh\circ f\circ h^{-1}(x) \cdot Df\circ h^{-1}(x)\cdot Dh^{-1}(x)\\
&=e^{g_{\ell_0}\circ f\circ h^{-1}(x) } Df\circ h^{-1}(x) e^{-g_{\ell_0}\circ h^{-1}(x)}\\ 
\label{deri} &= e^{\phi\circ  h^{-1}(x)}Df\circ h^{-1}(x),\numberthis \end{align*}
so $D\hat{f}$ is piecewise $C^\beta$ and its discontinuities  belong to $h(C)$.  Let $q\in \overline{S}_{\ell_0}$ be a periodic point, $f^m(q)=q$. Choose $\delta > 0$ such that $Df^m$ do not have discontinuities on $I_0=[q,q+\delta]$ (we can do the same argument for $I_0=[q,q-\delta]$). Then $f^m\colon I_0 \mapsto f^m(I_0)$ has an $C^{1+\beta}$ inverse, denoted by $T$. Let $I_j=T^j(I_0)$. By the mean value theorem and the expansion of $f$ there is $\Cll{345f}> 0$ such that for every $j$
$$  \frac{1}{\Crr{345f}} |Df^{mj}(q)|  \leq    \frac{|I_0|}{|I_j|} \leq \Crr{345f} |Df^{mj}(q)|,   $$
so $$ |Df^{mj}(q)|= \lim_j |I_j|^{-1/j}.$$
Noe that $\hat{f}^m(h(q))=h(q)$.  Since $\hat{f}$ is  piecewise $C^{1+\beta}$ we can do the very same analysis considering $\hat{I}_j=h(I_j)$ and conclude that
$$ |D\hat{f}^{mj}(h(q))|= \lim_j |h(I_j)|^{-1/j}.$$
Since $h$ and its inverse are Lipschtiz there is $\Cll{lippp}> 1$ such that  for every $j$
$$  \frac{1}{\Crr{lippp}}\leq   \frac{h(I_j)}{I_j}\leq \Crr{lippp}.$$
So $\lim_j |h(I_j)|^{-1/j}=\lim_j |I_j|^{-1/j}$ and consequently $D\hat{f}^{mj}(h(q))=Df^{mj}(q)$. 
By (\ref{deri}) this implies 
$$Df^{mj}(q)=Df^{mj}(q)\Pi_{j=0}^m e^{\phi(f^j(q))},   $$
and $F$ follows. 
\end{proof}

Let $\mathcal{P}^n$ be the partition of $I$ by   the open  intervals of monotonicity of $f^n$, that is, $J=(a,b) \in \mathcal{P}^n$ if $f^i(J)\cap C=\emptyset$ for every $i< n$ and there is $i_a,i_b< n$ such that $\{f^{i_a}(a^+),f^{i_b}(b^-)\} \subset \hat{C}$.

\begin{lemma} \label{bvpartition} There is $\Cll{dist} >0$ and $\Cll{distp} > 0$  such that  for every  $n\in \mathbb{N}$ and $J\in \mathcal{P}^n$  we have 
\begin{equation}\label{ddis}   \frac{1}{ \Crr{dist}} \leq    \frac{Df^n(x)}{Df^n(y)}\leq \Crr{dist}\end{equation} 

\begin{equation}\label{ddis2}  |\ln  |Df^n(x)|  - \ln |Df^n(y)| |\leq \Crr{distp}|f^n(x)-f^n(y)|.\end{equation} 
for all $x,y\in J$.  Moreover if $\gamma$ is a bounded variation function with support contained in $J$ we have that for every $x\in J$ 
\begin{equation}\label{bvest}  v(L_F^n(\gamma))\leq \frac{\Crr{dist} }{|DF^n(x)|} \Big( v(\gamma)+  \Crr{distp} |I|   |\gamma|_{L^\infty(m)}\Big),\end{equation} 
where $v(g)$ denotes the variation of $g$.

\end{lemma}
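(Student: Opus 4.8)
The plan is to derive all three estimates from the chain rule together with the bare expansion bound $|Df|\ge\theta>1$, in the order $(\ref{ddis2})\Rightarrow(\ref{ddis})\Rightarrow(\ref{bvest})$. The starting point is the cocycle identity: for $z$ in the interior of a cylinder $J\in\mathcal P^n$,
\[
\ln|Df^n(z)|=\sum_{k=0}^{n-1}\ln|Df(f^k z)|,
\]
where, by the very definition of $\mathcal P^n$, each $f^k|_J$ with $0\le k\le n$ is a homeomorphism onto its image and, for $0\le k<n$, the image $f^k(J)$ lies in a single interval of monotonicity $(c_{i(k)},c_{i(k)+1})$ of $f$, on which $\ln|Df|$ coincides with $\ln|Df_{i(k)}|$. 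Moreover $f^{\,n-k}$ restricts to a homeomorphism of $f^k(J)$ onto $f^n(J)$ with $|Df^{\,n-k}|\ge\theta^{\,n-k}$, hence $|f^{\,n-k}(u)-f^{\,n-k}(v)|\ge\theta^{\,n-k}|u-v|$ there; therefore, for all $x,y\in J$,
\[
|f^k x-f^k y|\ \le\ \theta^{-(n-k)}\,|f^n x-f^n y|\ \le\ \theta^{-(n-k)}|I|,\qquad 0\le k\le n.
\]
This contraction estimate uses only $|Df|\ge\theta$ and no distortion bound, so there is no circularity.

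I would prove $(\ref{ddis2})$ first. Subtracting the two cocycle sums for $x$ and $y$ and using the triangle inequality,
\[
\bigl|\ln|Df^n(x)|-\ln|Df^n(y)|\bigr|\ \le\ \sum_{k=0}^{n-1}\bigl|\ln|Df(f^k x)|-\ln|Df(f^k y)|\bigr|.
\]
By the Lipschitz regularity of $\ln|Df|$ along the branches of monotonicity (the number of branches being finite), the $k$-th summand is bounded by $\kappa\,|f^k x-f^k y|$ with $\kappa$ the largest branch Lipschitz constant; combining with the contraction estimate this is at most $\kappa\,\theta^{-(n-k)}|f^n x-f^n y|$, and summing the geometric series $\sum_{k\ge 0}\theta^{-(n-k)}\le(\theta-1)^{-1}$ gives $(\ref{ddis2})$ with $\Crr{distp}=\kappa/(\theta-1)$, a constant independent of $n$ and of $J\in\mathcal P^n$. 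Then $(\ref{ddis})$ is immediate: since $|f^n x-f^n y|\le|f^n(J)|\le|I|$ for $x,y\in J$, $(\ref{ddis2})$ yields $\bigl|\ln|Df^n(x)|-\ln|Df^n(y)|\bigr|\le\Crr{distp}|I|$, and exponentiating gives $(\ref{ddis})$ with $\Crr{dist}=\exp(\Crr{distp}|I|)$.

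For $(\ref{bvest})$ I would write $L^n\gamma$ explicitly. Put $\widetilde J=f^n(J)$, $\phi=(f^n|_J)^{-1}\colon\widetilde J\to J$, and $\psi(t)=|Df^n(\phi(t))|^{-1}$ for $t\in\widetilde J$; then $L^n\gamma=(\gamma\circ\phi)\,\psi$ on $\widetilde J$ and $L^n\gamma\equiv 0$ off $\widetilde J$, so $v(L^n\gamma)\le v_{\widetilde J}\bigl((\gamma\circ\phi)\psi\bigr)+2\sup_{\widetilde J}\bigl|(\gamma\circ\phi)\psi\bigr|$. Apply the product inequality $v_{\widetilde J}(gh)\le\|g\|_\infty v_{\widetilde J}(h)+\|h\|_\infty v_{\widetilde J}(g)$ with $g=\psi$, $h=\gamma\circ\phi$, using: $v_{\widetilde J}(\gamma\circ\phi)=v(\gamma)$ because $\phi$ is a homeomorphism; $\|\psi\|_\infty=(\inf_J|Df^n|)^{-1}\le\Crr{dist}/|Df^n(x)|$ for every fixed $x\in J$, by $(\ref{ddis})$; and $v_{\widetilde J}(\psi)\le\Crr{distp}|I|\,\Crr{dist}/|Df^n(x)|$, which holds because by $(\ref{ddis2})$ the map $t\mapsto\ln|Df^n(\phi(t))|$ is $\Crr{distp}$-Lipschitz on $\widetilde J$ (hence of variation $\le\Crr{distp}|\widetilde J|\le\Crr{distp}|I|$) while $\psi=\exp(-\ln|Df^n\circ\phi|)$ with $\|\psi\|_\infty\le\Crr{dist}/|Df^n(x)|$. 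Collecting terms and absorbing $2\sup_{\widetilde J}|(\gamma\circ\phi)\psi|\le 2|\gamma|_{L^\infty(m)}\,\Crr{dist}/|Df^n(x)|$ into the constants gives $(\ref{bvest})$.

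The step I expect to be the main obstacle — and the only one that is not routine — is $(\ref{ddis2})$: one must make sure that each $f^k(J)$ genuinely lies in one branch of monotonicity (this is exactly what the definition of $\mathcal P^n$ buys, and it is what lets $\ln|Df|$ be treated as a well-behaved function on it) and that the contraction rate $\theta^{-(n-k)}$ is extracted from $|Df|\ge\theta$ alone, so that the $k$-sum is a genuine geometric series. Once $(\ref{ddis})$ and $(\ref{ddis2})$ are available, $(\ref{ddis})$ is a one-line exponentiation and the passage to $(\ref{bvest})$ is just bookkeeping with the product rule for the total variation and the boundary jumps of the extension by zero.
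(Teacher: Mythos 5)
The paper states Lemma~\ref{bvpartition} with no proof at all, so there is no internal argument to compare against; your proposal supplies a proof, and it is the standard bounded-distortion argument executed correctly in the natural order: the Lipschitz distortion estimate~(\ref{ddis2}) from the cocycle decomposition of $\ln|Df^n|$ and backward contraction, then~(\ref{ddis}) by exponentiating, then~(\ref{bvest}) by writing $L^n\gamma$ on $f^n(J)$ as $(\gamma\circ\phi)\cdot\psi$ and applying the product rule for variation. Two small points deserve explicit mention. First,~(\ref{ddis2}) needs $\ln|Df|$ to be Lipschitz on each branch, which is automatic for $f\in\mathcal{B}^{2+\beta}_{exp}(C)$ (the regularity assumed in Theorems~\ref{cite} and~\ref{zygmund}, the only places the lemma is used) but is \emph{not} automatic for the $\mathcal{B}^{1+BV}_{exp}(C)$ class under discussion where the lemma is stated; the mere $C^{1+BV}$ hypothesis would give~(\ref{ddis}) by summing a convergent series of $BV$ increments but not the linear modulus in~(\ref{ddis2}). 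Second, the boundary jumps of the extension by zero produce an extra $2\,|\gamma|_{L^\infty(m)}\cdot(\text{const})/|Df^n(x)|$ term that the stated inequality~(\ref{bvest}) does not display, and the two constants there are the \emph{same} ones fixed in~(\ref{ddis}) and~(\ref{ddis2}); ``absorbing into the constants'' therefore means enlarging the Lipschitz-distortion constant once and for all (say by $2/|I|$), which is harmless because all three displayed inequalities are one-sided, but you should say this explicitly rather than leave it implicit. Finally, the lemma statement mixes $f$ with $F$ and $L_F$ with $L$, which is plainly a typo in the paper; you are right to read these as the same map.
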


\subsection{Modulus of continuity: Statistical properties} 
Indeed when $\sigma_m(\phi) > 0$ the behavior of primitives of Birkhoff sums can be very wild on at least some part of the phase space, as proved in  de Lima and S. \cite{central} for expanding maps of the circle. We extend some of those results to the setting of piecewise expanding maps.

\begin{theorem}\label{cite} Let $f\in \mathcal{B}_{exp}^{2+\beta}(C)$ and $\phi\in  \mathcal{B}^{\beta}(C)$, with $\beta\in (0,1)$, such that 
$$\int \phi \Phi_1(\gamma) \ dm=0$$
for every $\gamma\in BV$.  Suppose $\sigma_{\mu_\ell}(\phi) > 0$ for some $\ell\leq E$. Let 
 \begin{equation*} \psi(x)=   \int  1_{[a,x]}\cdot \Big( \sum_{k=0}^\infty \sum_{j=0}^{p-1}\phi\circ f^{kp+j} \Big)\ dm.\end{equation*}
Then we have
 $$\lim_{h\rightarrow  0}  \mu_\ell\{ x\in I \colon  \frac{1}{  \sigma_{\mu_\ell}(\phi) L_\ell \sqrt{-\log |h|}}   \Big( \frac{\psi(x+h)-\psi(x)}{h}\Big) \leq y   \} =\frac{1}{2\pi} \int_{-\infty}^y e^{-x^2} \ dx.   $$
Here
$$L_\ell = \Big(\int |Df| \ d\mu_\ell\Big)^{-1/2}.$$
In particular $\psi$ is not a Lipschitz function of any measurable subset of positive measure in the support of $\mu_\ell$ and $\psi$ does not have bounded variation  on the support of $\mu_\ell$.
\end{theorem}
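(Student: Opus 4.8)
The plan is to reduce the difference quotient $\bigl(\psi(x+h)-\psi(x)\bigr)/h$ to an honest finite Birkhoff sum $S_{n}\phi(x)=\sum_{k<n}\phi\circ f^{k}(x)$ with $n=n(x,h)$ a random ``hyperbolic cutoff time'' comparable to $-\log|h|$, and then to run a central limit theorem for these sums with respect to $\mu_\ell$ and with a random index, in the spirit of de Lima and S. \cite{central} but adapted to piecewise expanding maps with several ergodic components and period $p(f)\ge1$. Fix $\ell$; the hypothesis $\int\phi\,\Phi_1(\gamma)\,dm=0$ forces $\int\phi\,d\mu_\ell=0$ (choose $\gamma$ suitably, as in the proof of Theorem \ref{cohomo}), and by Theorem \ref{cohomo} the assumption $\sigma_{\mu_\ell}(\phi)>0$ is exactly the statement that $\phi$ is not an $L^\infty$ coboundary on $S_\ell$. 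Fix a small $\epsilon_0>0$ and, for $x$ in the support of $\mu_\ell$ and $h$ small, let $n=n(x,h)$ be the largest integer such that $[x,x+h]$ lies in a single element $P_n(x)$ of the monotonicity partition $\mathcal{P}^{n}$ and $|f^{n}([x,x+h])|\le\epsilon_0$. By the bounded distortion of Lemma \ref{bvpartition} (here the stronger regularity $f\in\mathcal{B}^{2+\beta}_{exp}(C)$ is used) one gets $|Df^{n}(x)|\asymp\epsilon_0/|h|$, hence $\log|Df^{n}(x)|=-\log|h|+O(1)$ uniformly; applying Birkhoff's ergodic theorem to $\log|Df|$ with respect to the ergodic measure $\mu_\ell$ then yields
\[
n(x,h)\sim L_\ell^{2}\,(-\log|h|)\qquad\text{for }\mu_\ell\text{-a.e.\ }x,\ \text{ as }h\to0,
\]
where $L_\ell$ is the reciprocal square root of the Lyapunov exponent $\int\log|Df|\,d\mu_\ell$ of $\mu_\ell$; this is the only point where the precise value of $L_\ell$ enters.

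Next I would carry out the reduction. By definition (Theorem \ref{vvv}.C) $\psi(x+h)-\psi(x)=\int 1_{[x,x+h]}\bigl(\sum_{k\ge0}\phi\circ f^{k}\bigr)\,dm$. For the tail $\sum_{k\ge n}\phi\circ f^{k}=\bigl(\sum_{k\ge0}\phi\circ f^{k}\bigr)\circ f^{n}$ I change variables by $f^{n}$ on $P_n(x)$; using bounded distortion of $f^{n}$ (Lemma \ref{bvpartition}) together with the estimates of Theorem \ref{vvv}.B--C applied to the image interval $f^{n}([x,x+h])$ (of length $\asymp\epsilon_0$, bounded away from $0$), one sees this tail contributes $O(|h|)$. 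For the head $S_{n}\phi=\sum_{k<n}\phi\circ f^{k}$: since $P_n(x)\in\mathcal{P}^{n}$ the iterates $f^{k}([x,x+h])$, $k<n$, avoid $C$, so $\phi$ is $\beta$-H\"older on each of them, and by the backward contraction with bounded distortion $\operatorname{diam}f^{k}([x,x+h])\lesssim\epsilon_0\,\theta^{-(n-k)}$, whence $\sup_{t\in[x,x+h]}|S_{n}\phi(t)-S_{n}\phi(x)|\lesssim\epsilon_0^{\beta}\sum_{m\ge1}\theta^{-m\beta}<\infty$. Averaging over $[x,x+h]$ then gives
\[
\frac{\psi(x+h)-\psi(x)}{h}=S_{n(x,h)}\phi(x)+O(1),
\]
with an $O(1)$ uniform in $x$ and $h$; beforehand one discards, for each $h$, the set of $x$ for which $f^{n+1}([x,x+h])$ meets $C$ at the critical step or the distortion bounds degenerate, a set of $\mu_\ell$-measure $O(|h|)$ at each level and thus of negligible total effect.

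It then remains to show that $x\mapsto S_{n(x,h)}\phi(x)\big/\bigl(\sigma_{\mu_\ell}(\phi)\sqrt{n(x,h)}\bigr)$ converges in law, as $h\to0$ with $x$ distributed by $\mu_\ell$, to a standard Gaussian; since $n(x,h)\sim L_\ell^{2}(-\log|h|)$ $\mu_\ell$-a.e., the normalisation $\sigma_{\mu_\ell}(\phi)\sqrt{n(x,h)}$ is asymptotic to $\sigma_{\mu_\ell}(\phi)L_\ell\sqrt{-\log|h|}$ and absorbs the additive $O(1)$, which is the claimed statement. For a deterministic index the convergence $S_N\phi/\sqrt N\Rightarrow\sigma_{\mu_\ell}(\phi)\,N(0,1)$ under $\mu_\ell$ is the classical central limit theorem for piecewise expanding maps (Broise \cite{broise}, or the Nagaev--Guivarc'h method applied to the decomposition (\ref{ii}) on Keller's space $BV_{1,1/\beta}$, to which $\phi$ belongs as it is piecewise $\beta$-H\"older); when $p=p(f)>1$ one applies this to the map $f^{p}$ (mixing on each piece of $\mu_\ell$) and the observable $\sum_{j<p}\phi\circ f^{j}$, of variance $p\,\sigma_{\mu_\ell}^{2}(\phi)>0$. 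To pass to the random index $n(x,h)$ I would invoke an Anscombe-type random-sum theorem (equivalently, an almost sure invariance principle $S_N\phi=B(\sigma_{\mu_\ell}^{2}(\phi)\,N)+o(N^{1/2-\varepsilon})$ $\mu_\ell$-a.e., available from the spectral gap): since $n(x,h)=L_\ell^{2}(-\log|h|)(1+o(1))$ $\mu_\ell$-a.e. and Brownian increments over time intervals of length $o(-\log|h|)$ near time $\asymp-\log|h|$ are a.s.\ $o(\sqrt{-\log|h|})$, one gets $S_{n(x,h)}\phi(x)=B\bigl(\sigma_{\mu_\ell}^{2}(\phi)L_\ell^{2}(-\log|h|)\bigr)+o(\sqrt{-\log|h|})$, and dividing by $\sigma_{\mu_\ell}(\phi)L_\ell\sqrt{-\log|h|}$ and using the exact scaling of Brownian motion gives the desired Gaussian limit.

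Finally, the two ``in particular'' assertions follow from the nondegeneracy of this limit. If $\psi$ were Lipschitz on a set $A$ with $\mu_\ell(A)>0$, then on $A_h:=A\cap(A-h)$ the quotient $(\psi(x+h)-\psi(x))/h$ is bounded, so for each fixed $y>0$ and all small $h$ the left-hand side of the displayed limit at $y$ is $\ge\mu_\ell(A_h)\to\mu_\ell(A)>0$ (translation continuity together with $\rho_\ell\in L^\infty$), and letting $y\to0$ contradicts the fact that the limit tends to $0$ there (because $\sigma_{\mu_\ell}(\phi)L_\ell>0$); and a bounded variation function on the finite union of intervals $\overline{S}_\ell$ would be differentiable $m$-a.e., hence $\mu_\ell$-a.e., producing a finite $\mu_\ell$-a.e.\ limit of $(\psi(x+h)-\psi(x))/h$, incompatible with the same spreading. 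I expect the random-time central limit theorem of the previous paragraph to be the main obstacle: one must upgrade the classical CLT to the random stopping time $n(x,h)$ while controlling $n(x,h)$ $\mu_\ell$-almost everywhere by the ergodic theorem, all on the support of a single a.c.i.p.\ $\mu_\ell$ for which $f$ need not be mixing, which requires either an Anscombe-type statement under quasi-compactness or a genuine ASIP for piecewise expanding maps restricted to $\mu_\ell$, in parallel with uniform distortion control near the discontinuity set $C$ (Lemma \ref{bvpartition}) and the estimate of the exceptional configurations discarded in the reduction.
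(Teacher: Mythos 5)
Your proposal is correct and follows essentially the same strategy as the paper: choose a hyperbolic cutoff time $n(x,h)\asymp N(x,h)$ governed by $\log|Df^n(x)|\approx -\log|h|$ (which in the paper is Proposition~\ref{prolima}), show $(\psi(x+h)-\psi(x))/h = S_{n(x,h)}\phi(x) + \text{small error}$ by splitting the Birkhoff sum at the cutoff and using bounded distortion (Lemma~\ref{bvpartition}) plus the Lasota--Yorke/BV estimates of Theorem~\ref{vvv} on the tail, then invoke a functional CLT for $\phi$ under $\mu_\ell$ (Keller) together with a random-index argument to conclude. Two minor bookkeeping discrepancies: the paper's error term is $O(\log\log(1/|h|))$, not $O(1)$, because on the set of good $x$ the cutoff must be taken as much as $K\log N(x,h)$ steps below $N(x,h)$ to guarantee a definite image length, and the discarded exceptional set has $\mu_\ell$-measure $O(\gamma)$ (with $\gamma\to0$ as the distortion threshold $\delta\to0$) rather than $O(|h|)$; both points are organized by Proposition~\ref{prolima} and neither affects your conclusion since $\log\log(1/|h|)=o(\sqrt{-\log|h|})$ and the exceptional measure can be made arbitrarily small.
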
 

For $m$-almost every point $x\in I$ and $h> 0$ small we define $N(x,h)$ as  the integer such that
$$ \frac{1}{|Df^{k+1}(x)|} \leq     |h|\leq \frac{1}{|Df^k(x)|} .$$

We are going to need

\begin{proposition}\label{prolima}  For every $\gamma> 0$ there is  $\delta > 0$ with the following property.  For every small $h_0> 0$, $h'\leq h_0$  one can find sets  $\Gamma^\delta_{h',h_0}\subset \Gamma^\delta_{h_0}$ such that 
\begin{itemize}
\item[A.]  We have $m(\Gamma^\delta_{h_0})\geq 1-\gamma$. 
\item[B.]  If $h'\leq \hat{h}$ then $\Gamma^\delta_{\hat{h},h_0} \subset \Gamma^\delta_{h',h_0}.$
\item[C.]   $\lim_{h'\rightarrow 0}  m(\Gamma^\delta_{h',h_0})=  m(\Gamma^\delta_{h_0})$. 
\item[D.]   There is $\Crr{upper}> 1$ and $\mathcal{K} > 0$ such that for  every  $x\in \Gamma^\delta_{h,h_0}$ and $h< h'$   there is  $N_1(x,h)$ satisfying 
 $$  N(x,h)- \mathcal{K}\log N(x,h) \leq   N_1(x,h) \leq N(x,h)$$
 such that if $\omega_{x,h}$ is defined by  $x\in \omega_{x,h}\in \mathcal{P}^{k}$  then 
 $$ \frac{1}{\Crr{upper}} \frac{1}{|Df^{N_1(x,h)}(y)|} \leq  |\omega_{x,h}|\leq \Cll{upper} \frac{1}{|Df^{N_1(x,h)}(y)|},$$
$$|f^{N_1(x,h)}(\omega_{x,h})|\geq  \delta,$$
$$|Df^{N_1(x,h)}(x)| |\omega_{x,h}|\geq  \delta,$$
and moreover $[x,x+h]$ is in the interior of $\omega_{x,h}$. 
\end{itemize} 
\end{proposition}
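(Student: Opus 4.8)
The plan is to reduce statement D to bounded distortion (Lemma \ref{bvpartition}) together with two genericity facts about the orbit, and to build $\Gamma^\delta_{h_0}$ and the filtration $\Gamma^\delta_{h',h_0}$ out of those facts. Write $\theta>1$ for the expansion constant and $\Crr{dist}$ for the distortion constant of Lemma \ref{bvpartition}; for $x$ whose forward orbit avoids $C$, let $\omega_k(x)\in\mathcal{P}^{k}$ be the element containing $x$, so that $f^k|_{\omega_k(x)}$ is a diffeomorphism with distortion $\le\Crr{dist}$, whence $|\omega_k(x)|$ is comparable to $|f^k(\omega_k(x))|\,|Df^k(y)|^{-1}$ up to $\Crr{dist}$, uniformly in $y\in\omega_k(x)$. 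I will use repeatedly that $N(x,\cdot)$ is non-increasing, $N(x,h)\asymp\log(1/h)$, and $N(x,h)\to\infty$ as $h\to0$ for a.e.\ $x$, and two elementary geometric facts: on a block of times where $\omega_k(x)$ is constant, $|f^k(\omega_k(x))|$ grows by a factor $\ge\theta$ at each step; and if $\omega_t(x)\subsetneq\omega_{t-1}(x)$, then $f^{t-1}(\omega_t(x))$ contains $f^{t-1}(x)$ in its interior and has a point of $C$ as an endpoint, so $|f^t(\omega_t(x))|\ge\theta\,\operatorname{dist}(f^{t-1}(x),C)$.

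First genericity fact (recurrence to $C$). From the spectral decomposition $L=\sum_\lambda\lambda\Phi_\lambda+K$ — with $|K^j|_{BV}\to0$ and the $\Phi_\lambda$ of finite rank — one has $\sup_j|L^j(1_I)|_{L^\infty}<\infty$, hence $m(f^{-j}B)=\int 1_B\,L^j(1_I)\,dm\le\kappa_0\,|B|$ for every $j$ and every Borel set $B$; applying this to $B=N_\varepsilon(C)$ (the $\varepsilon$-neighbourhood of $C$) with $\varepsilon=N^{-3}$ and summing over $j\le N$ and over $N$ — a convergent series — Borel--Cantelli yields a set of measure $\ge1-\gamma/2$ and a threshold $N_0(\gamma)$ on which $\operatorname{dist}(f^j(x),C)>N^{-3}$ for all $N\ge N_0(\gamma)$ and $j\le N$.

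Second genericity fact (fat images). Fix a threshold $\delta_\ast=\delta_\ast(\gamma)>0$, call $k$ good for $x$ if $|f^k(\omega_k(x))|\ge\delta_\ast$, and set $V_N:=\bigcup\{J\in\mathcal{P}^{N}:|f^N(J)|<\delta_\ast\}$. Writing $t(x,N)\le N$ for the last time $\omega_k(x)$ changed, the two geometric facts give $|f^N(\omega_N(x))|\ge\theta^{\,N-t+1}\operatorname{dist}(f^{t-1}(x),C)$, so $x\in V_N$ forces $f^{t-1}(x)\in N_{\delta_\ast\theta^{-(N-t+1)}}(C)$ for some $t\le N$, whence $m(V_N)\le\sum_{t=1}^N \kappa_0\cdot2(\#C)\,\delta_\ast\theta^{-(N-t+1)}\le\kappa_1\delta_\ast$ — \emph{uniformly in $N$}. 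The same idea, run on a window $W=[N-\mathcal{K}\log N,\,N]$ and combined with the recurrence fact — which bounds the length of any block of bad times on which $\omega_k(x)$ is unchanged by $O(\log N)$, so that ``no good time in $W$'' forces $\gtrsim\mathcal{K}$ changes inside $W$, each a close approach $f^{t-1}(x)\in N_{\delta_\ast\theta^{-e}}(C)$ with exponents $e\ge1$ summing to $\le\mathcal{K}\log N$ — should give $m(\{x:W\text{ contains no good time}\})\le\kappa_2\delta_\ast$, again uniformly in $N$. \textbf{This window estimate, and especially the upgrade from it to a single set $\Gamma^\delta_{h_0}$ of measure $\ge1-\gamma$ on which a good time lies in the window at \emph{every} small scale $N(x,h)$, is the main obstacle:} the sets $A_N$ of points with no good time in $W_N$ have uniformly small but non-summable measure and overlap nontrivially, and $A_N$ is correlated with the scale event $\{N(x,h)=N\}$, so neutralising the union over scales requires the exponential decay of correlations for $BV$ observables (the spectral gap of $K$), not merely the two Borel--Cantelli estimates above (this is the feature that is automatic for the $C^{1+\mathrm{Lip}}$ expanding circle maps of de Lima and S.\ \cite{central}, where the image is the whole circle at every scale). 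Granting this, choose $\delta_\ast$ small and $\mathcal{K}$ large, let $\Gamma^\delta_{h_0}$ be the set of measure $\ge1-\gamma$ carrying both facts, and set $\Gamma^\delta_{h',h_0}:=\{x\in\Gamma^\delta_{h_0}:\text{the good-time-in-window property holds at scale }N(x,h)\text{ for every }h\le h'\}$; monotonicity of $N(x,\cdot)$ gives B, and letting $h'\to0$ gives C.

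Finally, D. For $x\in\Gamma^\delta_{h,h_0}$ and $h<h'$ put $N:=N(x,h)$; searching inside the subwindow $[N-\mathcal{K}\log N,\,N-\kappa_3\log N]$ with $\kappa_3>3/\log\theta$ (and $\mathcal{K}$ taken large enough that this subwindow still contains a good time, which costs nothing in the second fact) gives $N_1=N_1(x,h)$ with $N-\mathcal{K}\log N\le N_1\le N-\kappa_3\log N$. Put $\omega_{x,h}:=\omega_{N_1}(x)$. Since $\delta_\ast\le|f^{N_1}(\omega_{x,h})|\le|I|$, bounded distortion yields $\frac{\delta_\ast}{\Crr{dist}\,|Df^{N_1}(y)|}\le|\omega_{x,h}|\le\frac{|I|\,\Crr{dist}}{|Df^{N_1}(y)|}$ and $|Df^{N_1}(x)|\,|\omega_{x,h}|\ge\delta_\ast/\Crr{dist}$, which are the three estimates with $\delta:=\delta_\ast/\Crr{dist}$ and $\Crr{upper}:=\max\{\Crr{dist}/\delta_\ast,\,|I|\,\Crr{dist}\}$. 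For the last assertion, each endpoint of $\omega_{N_1}(x)$ is the $f^j|_{\omega_{N_1}(x)}$-preimage of a point of $C\cup\{a,b\}$ for some $j<N_1$, hence lies at distance $\ge\operatorname{dist}(f^j(x),C)/(\Crr{dist}\,|Df^j(x)|)$ from $x$; since $h\le|Df^N(x)|^{-1}$ we get $h\,|Df^j(x)|\le\theta^{-(N-j)}$, and as $j<N_1\le N-\kappa_3\log N$ with $\kappa_3\log\theta>3$ we have, for $N$ large, $\Crr{dist}\,\theta^{-(N-j)}<N^{-3}<\operatorname{dist}(f^j(x),C)$. Thus that endpoint lies at distance $>h$ from $x$, so $[x,x+h]\subset\operatorname{int}\omega_{x,h}$, which proves D.
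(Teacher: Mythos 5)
The paper does not actually prove Proposition \ref{prolima}: the given ``proof'' is a two-sentence citation to \cite[Proposition 4.5]{lima2}, with the remark that the argument there transfers (and is even simpler here, since one works in phase space rather than parameter space). So there is no in-paper argument against which to compare your steps one by one; the content that matters lives in \cite{lima2}, which you were not shown.

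That said, your proposal does not yet constitute a proof, and you say so yourself. The recurrence fact and the fat-images estimate are fine and give, for each $N$, a set $A_N$ of measure at most $\kappa_2\delta_\ast$ on which the window $W_N=[N-\mathcal{K}\log N,\,N]$ contains no good time. But these bounds are only \emph{uniformly} small, not summable, so one cannot discard $\bigcup_N A_N$; and a cutoff at a finite $N_0$ does not help because the tail $\bigcup_{N>N_0}A_N$ never becomes small. This is exactly the step you flag as ``the main obstacle,'' and it is a real gap: you appeal to the spectral gap of $K$ and exponential decay of correlations for $BV$ observables as the mechanism that should neutralise the union over scales, which is the right idea and is indeed what makes the corresponding estimate in \cite{lima2} go through, but you do not carry out that estimate, so the existence of the single set $\Gamma^\delta_{h_0}$ of measure $\ge 1-\gamma$ with a good time in the window at \emph{every} small scale is left open. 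Everything downstream of ``Granting this'' is sound: the construction of $\Gamma^\delta_{h',h_0}$ and properties B and C follow from monotonicity of $N(x,\cdot)$; the three quantitative estimates in D follow from $\delta_\ast\le|f^{N_1}(\omega_{x,h})|\le|I|$ together with Lemma \ref{bvpartition}; and the interior-containment argument, combining $h\,|Df^j(x)|\le\theta^{-(N-j)}$ with the polynomial lower bound $\operatorname{dist}(f^j(x),C)>N^{-3}$ and the choice $N_1\le N-\kappa_3\log N$ with $\kappa_3\log\theta>3$, is correct. In short: a correct skeleton with one load-bearing measure-theoretic step missing, and that step is precisely the content the paper outsources to \cite{lima2}.
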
 
\begin{proof} The proof of this result it is quite similar  to  a related result in de Lima and S. \cite[Proposition 4.5]{lima2}. Indeed it is easier since we all dealing with the phase space instead of the parameter space as  in that reference.  
\end{proof}

\begin{proof}[Proof of \ref{cite}] The proof is quite similar  to the proof of the main results in de Lima and S. \cite{lima2}, so we detail only the main distinctions. Our setting here is actually easier  (we deal with the phase space instead of the parameter space).  

Let $x\in \Gamma^\delta_{h',h_0}$ and $h < h'$.  Let $N_1(x,h)$ as in Proposition \ref{prolima} and write  $N_1(x,h)= ap(f)+ r$, where $0\leq r < p=p(f)$. 
Then $f^i[x,x+h]\cap C =\emptyset$ and $\phi$ is $\beta$-H\"older on $f^i[x,x+h]$  for every $i<  ap(f)$.  This implies
\begin{align*} &\int 1_{[x,x+h]}\sum_{M=0}^{a-1} \sum_{j=0}^{p-1} \phi\circ f^{Mp+j}   \ dm 
                   =  \Big( \sum_{M=0}^{a-1} \sum_{j=0}^{p-1} \phi\circ f^{Mp+j}(x) \Big)  h  + R(x,h),  \end{align*} 
where
\begin{align*} 
|R_{x,n}|&=\Big| \int 1_{[x,x+h]}(y)\sum_{M=0}^{a-1} \sum_{j=0}^{p-1} \phi\circ f^{Mp+j}(y)-\phi\circ f^{Mp+j}(x)   \ dm(y) \Big|\\
&\leq \int 1_{[x,x+h]}(y)\sum_{M=0}^{a-1} \sum_{j=0}^{p-1} |\phi\circ f^{Mp+j}(y)-\phi\circ f^{Mp+j}(x)|   \ dm(y)  \\
&\leq \Cll{nnn}  \Big(  \int 1_{[x,x+h]}(y)  \sum_{M=0}^{a-1} \sum_{j=0}^{p-1}   | f^{Mp+j}(y) -  f^{Mp+j}(x)|^\beta \ dm(y) \Big) \\
\label{aux0} &\leq  \Crr{nnn} |h| (\sum_{j=0}^{ap-1} (\inf |Df|)^{- \beta j} |I|^\beta \leq \Cll{nnnn} |h|. \numberthis
\end{align*} 
Note that due Lemma \ref{bvpartition} we have that there is a $\Cll{oyoy}$ such that 
$$bv(L^{ap}1_{[x,x+h]})\leq \frac{\Crr{oyoy}}{|Df^{ap}(x)|} ,$$
$$|L^{ap}1_{[x,x+h]}|_{L^1(m)}=|h|,$$
Let $N_2(x,h)$ be  the smallest  integer  divisible by $p$ such that 
$$\frac{\Crr{c}^{N_2(x,h)} }{|Df^{ap}(x)|}\leq |h|.$$
Then Lasota-Yorke inequality gives us
$$|L^{ap+N_2(x,h)}1_{[x,x+h]}|_{BV}\leq \Cll{12345} |h|,$$
so we can use the same argument as in the proof of Theorem \ref{vvv} to conclude that
 \begin{align*} \Big| \int 1_{[x,x+h]}  \sum_{j=0}^\infty \phi\circ f^{ap+N_2(x,h)} \Big| &= \Big|  \int \phi \sum_{j=0}^\infty L^{ap+N_2(x,h)}1_{[x,x+h]} \ dm \Big|\\
 \label{aux1}  &\leq \Cll{12345er} |h|.\numberthis \end{align*} 
Note that 
\begin{align*} |h|&\geq \frac{1}{|Df^{N(x,h)+1}(x)|}= \frac{1}{|Df^{N(x,h)+1-ap}(f^{ap}(x) )|} \frac{1}{|Df^{ap}(x)|}\\
&\geq \Cll{tyty} \frac{\Cll[c]{q12w}^{\mathcal{K} \log N(x,h)}}{|Df^{ap}(x)|} \geq \Crr{tyty} \frac{\Crr{c}^{\frac{\log \Crr{q12w}}{\log \Crr{c}}\mathcal{K} \log N(x,h)}}{|Df^{ap}(x)|}.  \\
 \end{align*} 
Here $\Crr{q12w}= (\sup |Df|)^{-1}$.  So  
 $$N_2(x,h)\leq \Cll{efef} \log N(x,h) +  \Cll{efef2}. $$
 and
 \begin{equation}\label{aux2} \Big| \int  \phi \sum_{j=ap}^{N_2(x,h)-1} L^j1_{[x,x+h]} \ dm \Big| \leq  (\Crr{efef} \log N(x,h) +  \Crr{efef2})|h|. \end{equation} 
Finally note that 
\begin{equation}\label{aux3}  \log N(x,h)\leq \Cll{sunew} \log \log (\frac{1}{|h|}).\end{equation} 
Putting together the estimates (\ref{aux0}), (\ref{aux1}), (\ref{aux2}) and (\ref{aux3}) we obtain
\begin{equation}\label{cltlim}  \frac{\psi(x+h)-\psi(x)}{h}= \sum_{j=0}^{N_1(x,h)}\phi\circ f^j(x) + O(\log \log (\frac{1}{|h|}))\end{equation} 
for every  $x\in \Gamma^\delta_{h',h_0}$ and $h < h'$. By Keller \cite[Theorem 3.3]{keller} we have that 
$\phi$ satisfies the Functional Central Limit Theorem, that is, if we define $Y_n(\theta,x)$, with $x\in I$, by 
$$Y_N(\theta,x)= \frac{1}{\sigma_{\mu_\ell}\sqrt{N}}\sum_{j=0}^{\floor{N\theta}-1} \phi(f^j(x))+  \frac{N\theta- \floor{N\theta}}{\sigma_{\mu_\ell}\sqrt{N}} \phi(f^{\floor{N\theta}}(x)),  $$
then $Y_N$ converges in distribution (considering the measure $\mu_\ell$)  to the Wiener measure.  Moreover one can easily verify that 
$$-\frac{N_1(x,h)}{\log |h|}$$
converges in distribution (considering the measure $\mu_\ell$) to 
 $L^{-1}=(\int \ln |Df| \ d\mu_\ell)^{-1}$. Using (\ref{cltlim}) and classical tools in Probability one can complete the proof of the central Limit Theorem for the modulus of continuity of $\psi$.  See de Lima and S. \cite[Section 5]{lima2} for details. The fact that $\psi$ is not a Lipschitz function on any subset of positive measure in the support of $\mu_\ell$ also   follows in the same way as a similar result there. See de Lima and S. \cite[Section 9]{lima2}. \\
 
 \noindent   It is a simple exercise to show that if $\psi$ has bounded variation, then for every $\epsilon > 0$ there is a set $\Omega_\epsilon$ with $m(\Omega_\epsilon) > m(S_\ell)- \epsilon$  such that $\psi$ is Lipschitz in $\Omega_\epsilon$ (we have even a Lusin type result for bounded variation functions. See Goffman and Fon Che \cite{goff}). So $\psi$ does not have bounded variation.
\end{proof}  

\begin{rmk} In an abstract setting of a compact metric space $X$  with a reference measure $m$ and a non-singular dynamics $f\colon X\rightarrow X$,   one  can ask if similar results holds. That is, if the "distribution" $\psi$ defined by a Birkhoff sum 
$$\psi(A)= \lim_ N \sum_{i=0}^{N-1} \int   \phi\circ f^i(x) A(x)\ dm(x)$$
is well-defined  when the function $A$ is the  characteristic function of a ball, so  can ask about the distributional limit of the random variables
$$\frac{\psi(1_{B(x,r)}) }{m(1_{B(x,r)})}$$
considering the reference measure $m$, when $r$ goes to zero, after proper normalisation. Such study for the full shift and its Gibbs measures, for instance, would be interesting. This topic is somehow related (but not quite the same)   to Leplaideur and Saussol \cite{ls}. \end{rmk}

\subsection{Bounded variation regularity is rare} One can ask if Birkhoff sums can be more regular than Log-Lipschitz. In this section, we are going to see that  bounded variation  regularity is very rare.

\begin{theorem}\label{lipsc} Let $f\in \mathcal{B}^{2+\beta}(C)$, with $\beta\in (0,1)$. 
Let $p$ be  a multiplier of $p(f).$   Let $\phi\in \mathcal{B}^\beta(C)$ be  such that 
\begin{equation}\label{ficod} \int \phi \Phi_1(\gamma)=0\end{equation} 
for every $\gamma \in BV$. Consider
 \begin{equation*} \psi(x)=   \int  1_{[a,x]}\cdot \Big( \sum_{k=0}^\infty \sum_{j=0}^{p-1}\phi\circ f^{kp+j} \Big)\ dm.\end{equation*}
Then the following statements are equivalent
\begin{itemize}
\item[A.]  $\psi$ has bounded variation on each $S_\ell$, with $\ell \leq E$. 
\item[B.]  There is a  function  $g\in L^2(I)$ such that 
\begin{equation}\label{tyu} \phi = g\circ f -g\end{equation}
and $g\in L^\infty(S_\ell)$ for every $\ell\leq E$.  
\item[C.]  $\psi$ is absolutely continuous, $1/2$-H\"older continuous on $I$, and it  is Lipschitz on each $S_\ell$, with $\ell \leq E$. 
\end{itemize} 
Moreover if $A-C$ hold  then 
\begin{itemize}
\item[D.] for  every periodic point $q\in  \hat{I}\cap \overline{S}_\ell$, with $\ell\leq E$ and $f^m(p)=p$ we have 
\begin{equation}\label{inficod} \sum_{j=0}^{m-1} \phi(f^j(q))=0.\end{equation} 
Note that we need to consider lateral limits here if $\phi$ is not continuous at some points in the orbit of $q$.
\end{itemize}
\end{theorem}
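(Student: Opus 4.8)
The plan is to prove that each of \textbf{A}, \textbf{B}, \textbf{C} is equivalent to the vanishing of the asymptotic variance $\sigma^2_m(\phi)$ from Theorem \ref{cohomo}, and then to read off \textbf{D} from statement (F) of that theorem. I would begin with two routine observations: first, by Theorem \ref{vvv}(C) the function $\psi$ is well defined and Log-Lipschitz, hence \emph{continuous} on $I$, so (since the support $\overline{S}_\ell$ of $\mu_\ell$ differs from $S_\ell$ by only finitely many points) $\psi$ has bounded variation on $S_\ell$ iff it has bounded variation on $\overline{S}_\ell$; second, the hypothesis $\int\phi\,\Phi_1(\gamma)\,dm=0$ for all $\gamma\in BV$ forces $\int\phi\,d\mu_\ell=0$ for every $\ell\le E$ (take $\gamma$ a suitable multiple of $1_{S_\ell}$).

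The key step, and the only non-routine one, is to identify $\psi$ up to a Lipschitz error with the primitive $\alpha$ of statement (C) of Theorem \ref{cohomo}. For this I would compare the Ces\`aro averages $\hat\psi_u$ of Theorem \ref{conv2} with $\alpha$, using the elementary identity
\[
\frac{1}{u}\sum_{n=0}^{u-1}\sum_{k=0}^{n}\phi\circ f^{k}\;=\;-T_u(\phi)+\frac{1}{u}\sum_{k=0}^{u-1}\phi\circ f^{k}.
\]
Assuming $\sigma^2_m(\phi)=0$, pairing this with $1_{[a,x]}$ and letting $u\to\infty$ gives $\lim_u\hat\psi_u=-\alpha$ pointwise: the first term converges to $\alpha(x)=\int 1_{[a,x]}\,g\,dm$ because $T_u(\phi)\to g$ weakly in $L^2(m)$ by Theorem \ref{cohomo}(C), and the second term tends to $0$ since $\int\phi\,d\mu_\ell=0$. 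Together with the conclusion $\psi=\lim_u\hat\psi_u-G$ of Theorem \ref{conv2}, where $G$ is Lipschitz, this yields $\psi=-\alpha-G$. Hence $\psi$ is absolutely continuous, resp. $1/2$-H\"older on $I$, resp. Lipschitz on each $S_\ell$, resp. of bounded variation on each $S_\ell$, if and only if $\alpha$ is.

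The equivalences now follow by assembling Theorems \ref{cohomo} and \ref{cite}. For \textbf{B}$\iff\sigma^2_m(\phi)=0$: without its last clause, \textbf{B} is precisely condition (D) of Theorem \ref{cohomo}, so \textbf{B} implies $\sigma^2_m(\phi)=0$; conversely, if $\sigma^2_m(\phi)=0$ then Theorem \ref{cohomo} yields $g\in L^2(I)$ with $\phi=g\circ f-g$, and the argument for (D)$\Rightarrow$(E) there shows this same $g$ belongs to $L^\infty(S_\ell)$ for every $\ell$, so it witnesses \textbf{B}. For the remaining conditions I would run the cycle \textbf{C}$\Rightarrow$\textbf{A}$\Rightarrow\sigma^2_m(\phi)=0\Rightarrow$\textbf{C}. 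The first implication is immediate, since a function Lipschitz on the bounded set $S_\ell$ has bounded variation there. For \textbf{A}$\Rightarrow\sigma^2_m(\phi)=0$ I would argue by contradiction: if $\sigma^2_m(\phi)>0$ then by (\ref{variance}) some $\sigma^2_{\mu_\ell}(\phi)>0$, hence $\sigma_{\mu_\ell}(\phi)>0$, and Theorem \ref{cite} then asserts that $\psi$ has no bounded variation on the support of $\mu_\ell$, hence (by continuity of $\psi$) none on $S_\ell$, contradicting \textbf{A}. Finally, if $\sigma^2_m(\phi)=0$, condition (C) of Theorem \ref{cohomo} gives that $\alpha$ is absolutely continuous and $1/2$-H\"older on $I$ with $\alpha'=g\in L^2(I)$, while the previous paragraph gives $g\in L^\infty(S_\ell)$, so $\alpha$ --- and hence, by the comparison $\psi=-\alpha-G$, also $\psi$ --- satisfies \textbf{C}. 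This closes the loop, so \textbf{A}, \textbf{B}, \textbf{C} are all equivalent to $\sigma^2_m(\phi)=0$.

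It remains to note that \textbf{A}--\textbf{C} $\Rightarrow$ \textbf{D}: since \textbf{A}--\textbf{C} force $\sigma^2_m(\phi)=0$, statements (A)--(E) of Theorem \ref{cohomo} all hold, hence so does statement (F) of that theorem, which is exactly \textbf{D}. I expect the main obstacle to be the comparison step $\psi=-\alpha-G$: one must match the two Ces\`aro limits carefully and keep track of continuity (to pass between bounded variation on $S_\ell$ and on $\overline{S}_\ell$) and, when $S_\ell$ is a union of several intervals, interpret ``Lipschitz on $S_\ell$'' componentwise; everything else is bookkeeping built on Theorems \ref{vvv}, \ref{conv2}, \ref{cohomo} and \ref{cite}.
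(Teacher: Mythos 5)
Your proof takes essentially the same route as the paper: reduce everything to $\sigma^2_m(\phi)=0$, invoke Theorem~\ref{cite} for $A\Rightarrow\sigma^2_m(\phi)=0$, Theorem~\ref{cohomo} for the cohomological solution $g$ and the primitive $\alpha$, Theorem~\ref{conv2} to compare $\psi$ with $\alpha$ up to a Lipschitz error, and Theorem~\ref{cohomo}(F) for part~D. The one organizational difference is that the paper proves $B\Rightarrow C$ directly by telescoping the Birkhoff sum of $\phi=g\circ f-g$, which gives $\psi(x)=-\int g\,1_{[a,x]}\,dm+\sum_{\lambda\in\Lambda}\int g\,\Phi_\lambda(1_{[a,x]})\,dm$ and then uses boundedness of $\Phi_\lambda:L^1(m)\to BV$; that is slightly more economical than rerouting through $\sigma^2_m(\phi)=0$ as you do, but both are valid. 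One harmless sign slip: in your intermediate line the first term $-\int 1_{[a,x]}T_u(\phi)\,dm$ converges to $-\alpha(x)$, not $\alpha(x)$, though your stated conclusion $\psi+G=-\alpha$ is the correct one (the paper itself writes ``$\alpha=\psi+G$'', apparently the opposite slip; since only regularity properties are used, neither affects the argument).
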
 

\begin{rmk} The condition (\ref{ficod}) is satisfied in  a subspace of $\mathcal{B}^\beta(C)$ with {\it  finite} codimension, but condition (\ref{inficod}) on all periodic points in $\cup_\ell \overline{S}_\ell$  is satisfied only in a subspace with {\it  infinite}  codimension. This justifies  the claim that bounded variation regularity of $\psi$ is "rare". 
\end{rmk} 

\begin{proof} Of course $C\implies A$. \\

\noindent {\it $A\implies B \ and \ C$.} Suppose that $\psi$ has bounded variation. We claim that $\sigma^2_m(\phi)=0$. Indeed, suppose that this is not true. Then  $\sigma^2_{\mu_{\ell_0}}(\phi)>0$ for some $\ell_0$. By Theorem \ref{cite} we have that $\psi$ does not have bounded variation on the support of $\mu_{\ell_0}$.   So we conclude that $\sigma^2_m(\phi)$ must be zero. Theorem \ref{cohomo} says that $\alpha$, as defined in (\ref{rew}), is absolutely continuous on $I$, and its derivative $D\alpha\in L^2(m)$ satisfies $\phi= D\alpha \circ f - D\alpha$ and $\D\alpha \in L^\infty(S_\ell)$ for every $\ell\leq E$. Moreover $\alpha$ is absolutely continuous, $1/2$-H\"older continuous on $I$, and it  is Lipschitz on each $S_\ell$, with $\ell \leq E$.  By Theorem \ref{conv2} we have that $\alpha = \psi+ G$ is a  Lipschitz  function, so $D\alpha=D\psi+DG\in L^2(I)$ and $D\alpha\in L^\infty(S_\ell)$ for every  $\ell\leq E$. This completes the proof. \\

\noindent {\it $B\implies C$.} We have 
\begin{align*} \psi(x)&=  \lim_{N\rightarrow \infty}   \int  1_{[a,x]}\cdot \Big( \sum_{k=0}^N \sum_{j=0}^{p-1}\phi\circ f^{kp+j} \Big)\ dm \\
 &=\lim_{N\rightarrow \infty}   \int  1_{[a,x]}\cdot (g\circ f^{(N+1)p} - g)\ dm \\
 &=- \int g 1_{[a,x]} \ dm+ \sum_{\lambda\in \Lambda} \int g \Phi_\lambda(1_{[a,x]}) \ dm,
\end{align*} 
The boundness of $\Phi_\lambda\colon L^1(m)\rightarrow BV$ and the assumptions on $g$ quickly  implies that  $\psi$ is absolutely continuous and  $1/2$-H\"older continuous on $I$, and it  is Lipschitz on each $S_\ell$, with $\ell \leq E$. \\

\noindent {\it $A,B \ and \ C\implies D$.} We already saw that $A$ implies $\sigma^2_m(\phi)=0$. So $D.$ follows from Theorem \ref{cohomo}.
\end{proof}

\subsection{Zygmund regularity} \label{zyg} 

Theorem \ref{vvv} tells us that the primitive $\alpha$ of the  Birkhoff sum of $\phi$ is  always Log-Lipschitz continuous.  However, Theorem \ref{lipsc} says that it is very rare that $\alpha$ has finite bounded variation. One can ask if Log-Lipschitz  regularity is sharp. Note that for expanding maps on the circle $\alpha$ is {\it always} Zygmund (see de Lima and S. \cite{central}). If each break point  is either eventually periodic  or Misiurewicz,  we can provide a definite answer for piecewise H\"older functions $\phi$.  Denote

$$\mathcal{O}^+(f,y) =\{ f^n(y)\colon \ n\in \mathbb{N}\}.$$

\begin{theorem} \label{zygmund} Let $f \in  \mathcal{B}^k_{exp}(C)$.  Suppose that
$$  \inf_{c\in \hat{C}} \inf_{x\in \mathcal{O}^+(f,c)\setminus \hat{C}} dist(x,\hat{C}) > 0$$
and let $\phi \in  \mathcal{B}^\beta(C)$, with $\beta \in (0,1)$,  be such that 
$$\int \phi \Phi_1(\gamma)=0$$
and for every $c\in C$  
\begin{itemize}
\item either $c\not\in \partial I$ and there is $N_{c^\pm} , M_{c^\pm} \in \mathbb{N}$   such that 
$$f^{M_{c^\pm}} (f^{N_{c^\pm}}(c^\pm))=f^{N_{c^\pm}}(c^\pm)$$
and
 \begin{align*} 
&\frac{1}{\ln |Df^{M_{c^+}} (f^{N_{c^+}}(c^+))| } \sum_{i=N_{c^+}}^{N_{c^+} + M_{c^+}-1}  \phi(f^i(c^+))\\
&= \frac{1}{\ln |Df^{M_{c^-}} (f^{N_{c^-}}(c^-))| } \sum_{i=N_{c^-}}^{N_{c^-} + M_{c^-}-1}  \phi(f^i(c^-)) \end{align*} 
\item or  $f^i$ is  continuous at $c$ for every $i\geq 0$ and there is $N_{c^+}=N_{c^-}$ such that $f^i(c)\not\in C$ for every $i\geq N_c$. 
\end{itemize} 
Then 
$$\psi(x)=   \int  1_{[a,x]}\cdot \Big( \sum_{k=0}^\infty  \sum_{j=0}^{p-1} \phi\circ f^{kp+j} \Big)\ dm$$
is a Zygmund function, that is, there is $C$ such that 
$$|\psi(x+h)+\psi(x-h)-2\psi(x)|\leq C|h|$$
for every $x$ such that $[x-h,x+h]\subset I$. 
\end{theorem}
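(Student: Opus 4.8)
The plan is to reduce the Zygmund bound to an estimate on a truncated Birkhoff sum of length $\sim\log(1/|h|)$, and then analyse that truncation geometrically; the only delicate contributions come from the portions of $[x-h,x+h]$ whose forward iterates shadow the orbits of the critical points. By Theorem~\ref{vvv}, $\psi$ is well defined and $\psi(z)=\int 1_{[a,z]}\big(\sum_{k\ge 0}\sum_{j=0}^{p-1}\phi\circ f^{kp+j}\big)\,dm$; writing $\gamma_h:=1_{[x,x+h]}-1_{[x-h,x]}$ (so $|\gamma_h|_{L^1(m)}=2|h|$ and $v(\gamma_h)\le 4$),
$$S_h(x):=\psi(x+h)+\psi(x-h)-2\psi(x)=\int \gamma_h\,\Big(\sum_{k=0}^\infty\sum_{j=0}^{p-1}\phi\circ f^{kp+j}\Big)\,dm,$$
and since $\int\phi\,\Phi_1(\gamma)\,dm=0$ for every $\gamma\in BV$, Lemma~\ref{kk} turns this into $S_h(x)=\int\phi\sum_{i\ge 0}K^i(\gamma_h)\,dm$. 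Choose $i_0=i_0(h)\in p\mathbb{N}$ minimal with $\Crr{cpf}^{\,i_0}\le|h|$, so $i_0\asymp\log(1/|h|)$. The spectral gap of $K$ in $BV$ (property (ii) after (\ref{ii})) gives, as in the proof of Theorem~\ref{vvv}.B, $\big|\int\phi\sum_{i\ge i_0}K^i(\gamma_h)\,dm\big|\le|\phi|_{L^\infty(m)}\sum_{i\ge i_0}|K^i(\gamma_h)|_{BV}\le C\,|\phi|_{L^\infty(m)}\,|h|$. Applying Lemma~\ref{kk} once more, the remaining ``head'' equals
$$H_h(x):=\int\phi\sum_{i=0}^{i_0-1}K^i(\gamma_h)\,dm=\int_x^{x+h}\big(B_{i_0}(t)-B_{i_0}(t-h)\big)\,dt,\qquad B_n:=\sum_{i=0}^{n-1}\phi\circ f^i,$$
so everything reduces to proving $|H_h(x)|\le C\,|\phi|_{L^\infty(m)}\,|h|$ uniformly in $x,h$.

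For the head, set $J:=[x-h,x+h]$ and let $n_0=n_0(x,h)$ be the largest $n$ with $J$ contained in one element of $\mathcal{P}^n$, i.e. $f^j|_J$ a monotone branch and $f^j(J)\cap C=\emptyset$ for all $j<n$. For $j<\min(n_0,i_0)$ the map $f$ is expanding on $f^j(J)$, so the lengths $|f^j(J)|$ are dominated by a geometric sequence with largest term $\le|I|$; together with bounded distortion (Lemma~\ref{bvpartition}) and the $\beta$-Hölder continuity of $\phi$ on $f^j(J)$ this gives $|\phi(f^j(t))-\phi(f^j(t-h))|\le C|f^j(J)|^\beta$, whence $\sum_{j<\min(n_0,i_0)}\int_x^{x+h}|\phi(f^j(t))-\phi(f^j(t-h))|\,dt\le C|h|$. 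If $n_0\ge i_0$ we are done. Otherwise $f^{n_0}(J)$ meets $C$ in finitely many points, and it suffices to treat the contribution of one point $y_0\in J$ with $f^{n_0}(y_0)=c\in C$. This $y_0$ splits $J$ into $J^-=[x-h,y_0]$ and $J^+=[y_0,x+h]$, labelled so that $f^{n_0}(J^\pm)$ abuts $c$ from the two sides; for $j>n_0$ the interval $f^j(J^\pm)$ shadows $\mathcal{O}^+(f,c^\pm)$.

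The crux is the range $n_0\le j<i_0$: one tracks the position of $t$ and $t-h$ relative to $y_0$ and changes variables by $f^{n_0}$ on $J^\pm$, reducing the remaining part of $H_h(x)$ to comparing, on intervals adjacent to $c$ from the two sides, the Birkhoff sums $B_{i_0-n_0}$ near $c^+$ versus near $c^-$. If $c$ falls under the second alternative of the hypothesis (every $f^i$ continuous at $c$, forcing $f^i(c)\notin C$, and $\mathcal{O}^+(f,c)$ at distance $\ge\delta_0>0$ from $\hat C$ by the standing assumption), then for $j\ge n_0+N_c$ the intervals $f^j(J^\pm)$ again lie in a single smooth branch with length $<\delta_0$ and the estimate of the previous paragraph reapplies, while the $N_c$ intermediate terms contribute $\le 2N_c|\phi|_{L^\infty(m)}|h|$; here a jump of $\phi$ at $c$ only creates a corner of $\psi$ at $y_0$, harmless for the Zygmund modulus. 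If instead $c$ is eventually periodic, $f^{N_c+M_c}(c)=f^{N_c}(c)$, the pre-images of the periodic orbit accumulate at $y_0$ and produce a genuine logarithmic singularity: one shows $B_{i_0}(t)=A^{\pm}\log\frac{1}{|t-y_0|}+O(1)$ for $t\in J^\pm$ at distance $\gtrsim|t-y_0|$ from $y_0$, with
$$A^{\pm}=\frac{1}{\ln|Df^{M_c}(f^{N_c}(c))|}\sum_{i=N_c}^{N_c+M_c-1}\phi(f^i(c^\pm)),$$
the orbit of $t$ shadowing the periodic orbit for about $\frac{1}{\ln|Df^{M_c}(f^{N_c}(c))|}\log\frac{1}{|t-y_0|}$ periods before escaping. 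Summing the geometric series over the successive levels of pre-images of the periodic orbit (legitimate since $|Df^{M_c}(f^{N_c}(c))|>1$) and integrating over $[x-h,x+h]$, the contribution of $y_0$ to $H_h(x)$ becomes $(A^+-A^-)\cdot O(|h|\log\tfrac1{|h|})+O(|h|)$, and the hypothesis $A^+=A^-$ annihilates the first term; equivalently, near $y_0$ one has $\psi(y)=\psi(y_0)+A(y-y_0)\log\tfrac1{|y-y_0|}+(\text{a Zygmund remainder})$ and $y\mapsto y\log\tfrac1{|y|}$ is Zygmund. Summing over the finitely many $c\in f^{n_0}(J)\cap C$ and the finitely many remaining iterates yields $|H_h(x)|\le C|\phi|_{L^\infty(m)}|h|$, which together with the first paragraph proves the theorem.

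The expected main obstacle is precisely this last paragraph: establishing the asymptotics $B_{i_0}(t)=A^{\pm}\log\frac1{|t-y_0|}+O(1)$ uniformly in $t$ and $h$, and the bookkeeping showing that after cancellation between the two sides of each $y_0$ the only term that is not $O(|h|)$ is the one proportional to $A^+-A^-$ — in particular controlling the re-entries of the iterated interval into neighbourhoods of $C$, where one must sum a geometric series over nested pre-images of the periodic orbit and use the expansion $|Df^{M_c}(f^{N_c}(c))|>1$ along it. These estimates are carried out, in the essentially equivalent settings of expanding circle maps and of the parameter space, in de Lima and S.~\cite{central,lima2}, and the present argument would follow the same scheme, with $\hat C$ in the role played there by the turning and cut points.
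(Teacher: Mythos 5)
Your global architecture (peel off a controllable tail, reduce the second difference to the behaviour of the Birkhoff sum near the preimage $y_0$ of a critical point, and use the matching of the two one-sided logarithmic coefficients to cancel the $|h|\log(1/|h|)$ term) is the same in spirit as the paper's argument. However, the way you implement the head estimate has a genuine gap. You cut the sum at a time $i_0\asymp\log(1/|h|)$ determined only by $h$ (through the spectral rate of $K$) and then claim the pointwise asymptotic $B_{i_0}(t)=A^{\pm}\log\frac{1}{|t-y_0|}+O(1)$. This cannot hold uniformly: the orbit of $t$ shadows the periodic orbit only up to an escape time of order $\log(1/|t-y_0|)/\ln|Df^{M_c}(f^{N_c}(c))|$, and between that time and $i_0$ there remain, for typical $t$ with $|t-y_0|\asymp|h|$, of order $\log(1/|h|)$ further iterates (the periodic multiplier and the spectral-gap rate defining $i_0$ are unrelated constants), each contributing up to $|\phi|_{L^\infty}$; for $|t-y_0|\ll|h|$ the stay time exceeds $i_0$ and the formula overcounts in the other direction. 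So the error is $O(\log(1/|h|))$ pointwise, and integrating only reproduces the Log-Lipschitz bound of Theorem \ref{vvv}, with no gain. The post-escape contribution is not small pointwise; it is small only after reapplying the transfer-operator/BV mechanism, which is exactly the part your outline leaves to the references.

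This is where the paper's proof differs essentially: it first passes to $F=f^T$, $\theta=\sum_{i<T}\phi\circ f^i$ (so the one-sided critical values become genuine fixed points of $F$), and then, instead of a cutoff depending only on $h$, uses stopping times $q_k$ depending on the one-sided pieces $J^k$ of $F^q[x-h,x+h]$ (the first time $|DF^{q_k}(c_k)||J^k|$ reaches a definite size $d$). Beyond $q_k$ the whole remaining pairing is $O(|h|)$ because $L^{q_k}(u_k)$ has BV norm $O(|h|)$, and the a priori log-Lipschitz functional bound together with $u\ln(1/u)\le 1/e$ converts this into $O(|h|)$; for $i\le q_k$ one uses $\int L^i(u_k)\,dm=\int u_k\,dm$ and the H\"older drift along the orbit of $c_k$ to get $q_k\,\theta(F(c_k))\int u_k\,dm+O(|h|)$ with $q_k=-\ln|J^k|/\ln|DF(F(c_k))|+O(1)$. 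Note also that after this the hypothesis $A^+=A^-$ alone does not finish the proof: one must still show that $\ln|J^1|\int u_1\,dm+\ln|J^2|\int u_2\,dm=O(|h|)$, which uses $\int u_1\,dm+\int u_2\,dm=0$ together with bounded distortion (comparability of $|J^1|/|J|$ and $|J^2|/|J|$, or the elementary bound on $\min\{t,1-t\}\,|\ln t-\ln(1-t)|$) — a cancellation step absent from your outline. Deferring these points to de Lima--Smania is not enough: those papers treat different settings, and the specific intermediate claim you rely on is false as stated, so the argument needs to be restructured along the lines above rather than patched.
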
  
\begin{proof} Let $T$ be a multiple of  the integers  $p(f)$, $N_{c^\pm}$  and  $M_{c^\pm}$ for all $c\in C$. Let $F(x)=f^T(x)$ and $G(x)=g^T(x)$.  Then  $F\in \mathcal{B}^k_{exp}(C_F)$  and 
$$\theta(x)=\sum_{i=0}^{T-1} \phi(f^i(x)).$$
belongs to $\mathcal{B}^\beta_{exp}(C_F)$ for some finite set  $C_F$, in such way that for every $c\in C_F$

\begin{itemize}
\item[-]{\it Type I.} either we have $F^2(c^\pm)=F(c^\pm)$ and 
$$\frac{\theta(F(c^+))}{\ln |DF (F(c^+))| } = \frac{\theta(F(c^-))}{\ln |DF (F(c^-))| },$$
\item[-]{\it Type II.} or  $F^i$ is  continuous at $c$ for every $i\geq 0$ and   $F^i(c)\not\in C_F$ for every $i\geq 1$. 
\end{itemize} 
Moreover
$$\psi(x)=   \int  1_{[a,x]}\cdot \Big( \sum_{k=0}^\infty  \theta\circ F^{k} \Big)\ dm.$$
Denote
$$  d=\frac{1}{2} \inf_{c\in \hat{C}_F} \inf_{x\in \mathcal{O}^+(F,c)\setminus \hat{C}_F} dist(x,\hat{C}_F) > 0.$$
Let $x\in I$ and $h $ be such that $[x-h,x+h]\subset I$. We may assume $|h|< d$.   Then
$$\psi(x+h)+\psi(x-h)-2\psi(x)=  \int  \theta \cdot   \Big( \sum_{k=0}^\infty  L_F^{k}(1_{[x,x+h]}- 1_{[x-h,x]})  \Big)\ dm$$

Let $q$ be the smallest integer satisfying $$F^q((x-h,x+h))\cap C_F\not=\emptyset.$$
We have
$$|F^i[x-h,x+h]|\leq (\min_{z\in \hat{I}} |DF(z)|)^{-i}$$
and $supp \  L^i  (1_{[x,x+h]}- 1_{[x-h,x]}) \subset F^i[x-h,x+h]$
for $i< q$. Since 
$$\int |L^i (1_{[x,x+h]}- 1_{[x-h,x]})|\ dm \leq  |2h|, $$
$$\int L^i (1_{[x,x+h]}- 1_{[x-h,x]})|\ dm=0,$$
and $\theta$ is $\beta$-H\"older in $I\setminus C_F$ it follows that 
\begin{align} \label{bigg} 
&\sum_{i=0}^{q-1} \int \theta \cdot L^i (1_{[x,x+h]}- 1_{[x-h,x]})\ dm \nonumber \\
&=\sum_{i=0}^{q-1}  \Big( \theta(F^i(x)) \int  L^i (1_{[x,x+h]}- 1_{[x-h,x]})\ dm \nonumber \\
&+ \int (\theta-\theta(F^i(x))) L^i (1_{[x,x+h]}- 1_{[x-h,x]}) \ dm \Big) \nonumber \\
&=O(|h|\sum_{i=0}^{q-1} |F^i[x-h,x+h]|^\beta)=O(h).
\end{align}
Define $J=F^q([x-h,x+h])$.
Let $c\in C_F$ be defined by 
$$\{c\}= F^q((x-h,x+h))\cap C_F.$$
Let $J^1$ and $J^2$ be the right and left connected components of $F^q([x-h,x+h])\setminus \{c\}$ and
$$u_k = 1_{J^k} \cdot L^q( 1_{[x,x+h]}- 1_{[x-h,x]}).$$
Of course
\begin{equation} \label{pi} \int u_1 \ dm+  \int u_2\ dm=0.\end{equation} 
Fix $y\in [x-h,x+h]$ such that $F^q(y)=c$.  It follows from Lemma \ref{bvpartition} that there is $\Cll{uni}$, that depends only on $f$, such that 
$$v(u_k)\leq  \frac{\Crr{uni}}{|DF^q(y)|}.$$
and
$$ \frac{1}{|DF^q(y)|} \leq 2\Crr{dist} \frac{|h|}{|J|}.$$
Moreover
$$|u_k|\leq 2\Crr{dist} \frac{|h|}{|J|}.$$
Let $c_1=c^+$ and  $c_2=c^-$. \\

\noindent {\it Case I. $c$ is a type I point.} Let  $q_k$, with $k=1,2$ be  the smallest integer such that 
$$    |DF^{q_k}(c_k)||J^k| > \frac{d}{ \Crr{dist}}.$$
This implies that  $F^{q_k}$ is a diffeomorphism on $J^k$ and 
$$|F^{q_k}J^k|\geq \frac{d}{ \Crr{dist}^2}.$$
 Note also that $supp \ L_F^i(u_k)\subset F^i(J^k)$ and
\begin{equation}\label{con}  |F^i(J^k)|\leq ( \max_{z\in \hat{I}}  |DF(z)|)^{-i}. \end{equation} 
for every $i\leq q_k$.
 It follows from (\ref{ddis}) and (\ref{bvest}) that
 $$v(L^{q_k}(u_k))\leq  C \frac{1}{|DF^{q_k}(c_k)|}  \frac{|h|}{|J|}\leq \Cll{nova} \frac{|J^k|}{|J|} |h|.$$
 and
 $$|L^{q_k}(u_k)|\leq C \frac{1}{|DF^{q_k}(c_k)|}  \frac{|h|}{|J|}\leq \Crr{nova} \frac{|J^k|}{|J|} |h|.$$
 for some $\Crr{nova}$ that depends only on $f$. 
Consequently 
 $$|\sum_{i=0}^\infty \int \theta \cdot L^i(L^{q_1}(u_2)) \ dm|+ |\sum_{i=0}^\infty \int \theta \cdot L^i(L^{q_2}(u_2)) \ dm| \leq   \Cll{nova2}   |h|.$$

 Since  $c$ is a type I critical point we have 
$$  q_k= - \frac{\ln |J^k|}{\ln  |DF(F(c_k))|} + O(1).$$
We have that 
\begin{equation}  \label{pi1}  \int L_F^i(u_k)\ dm = \int u_k \ dm\end{equation}
and
\begin{equation} \label{pi2}  \int |L_F^i(u_k)|\ dm \leq  \int |u_k| \ dm \leq 2|h|\end{equation}
holds for every $i\leq q_k$.  Then
\begin{align} \label{fund}
&\int \theta \sum_{i=0}^{q_k}  L_F^i(u_k) \ dm  \nonumber \\ 
&=\sum_{i=0}^{q_k} \Big(    \theta(F^i(c_k))  \int L_F^i(u_k) \ dm +   \int (\theta(x) - \theta(F^i(c_k)))  L_F^i(u_k)(x)  \ dm(x)  \Big) \nonumber \\ 
&= q_k  \theta(F(c_k)) \int u_k \ dm + O(2|h|(1+\sum_{i=0}^{q_k} |F^i(J^k)|^\beta)) \nonumber\\ 
&= q_k  \theta(F(c_k)) \int u_k  \ dm + O(|h|) \nonumber\\ 
&= - \frac{ \theta(F(c_k))}{\ln  |DF(F(c_k))|}  \ln |J^k| \int u_k  \ dm + O(|h|),
\end{align} 
consequently 
\begin{align*}
&\int \theta \sum_{i=0}^{q_1}  L_F^i(u_1) \ dm + \int \theta \sum_{i=0}^{q_2}  L_F^i(u_2) \ dm  \\
&=- \frac{ \theta(F^i(c_1))}{\ln  |DF(F(c_1))|}  \Big( \ln |J^1| \int u_1  \ dm +  \ln |J^2| \int u_2  \ dm    \Big) +  O(|h|).\\
&=- \frac{ \theta(F^i(c_1))}{\ln  |DF(F(c_1))|} \Big(  \int u_1  \ dm \Big)  \Big( \ln \frac{|J^1|}{|J|}   -  \ln \Big(1 -\frac{|J^1|}{|J|}\Big)  \Big) +  O(|h|).   
\end{align*} 
If $Q_1$ and $Q_2$ are the  connected components of $J\setminus \{F^q(x)\}$  then
$$   \frac{1}{\Crr{dist}}   \leq  \frac{|Q_1|}{|Q_2|}\leq \Crr{dist},$$
so
$$\frac{1}{1+\Crr{dist}}\leq \frac{|Q_i|}{|J|}\leq  \frac{\Crr{dist}}{1+\Crr{dist}}.$$
In particular if 
\begin{equation}\label{inter} \frac{1}{1+\Crr{dist}}\leq \frac{|J^1|}{|J|}\leq  \frac{\Crr{dist}}{1+\Crr{dist}}\end{equation} 
we have
\begin{equation} \label{ooo} \Big( \int u_1  \ dm \Big)  \Big( \ln \frac{|J^1|}{|J|}   -  \ln \Big(1 -\frac{|J^1|}{|J|}\Big)\Big) =O(|h|).\end{equation} 
 if (\ref{inter}) does not hold then 
\begin{align*} &\Big| \Big( \int u_1  \ dm \Big)  \Big( \ln \frac{|J^1|}{|J|}   -  \ln \Big(1 -\frac{|J^1|}{|J|}\Big)\Big)\Big| \\
&\leq  2\Crr{dist} |h|  \min\{ \frac{|J^1|}{|J|},1 -\frac{|J^1|}{|J|}   \} \Big| \Big( \ln \frac{|J^1|}{|J|}   -  \ln \Big(1 -\frac{|J^1|}{|J|}\Big)\Big)\Big| \\
&\leq 2\Crr{dist} |h|  \sup_{0< t< 1}   \min\{t,1 -t   \}|\ln t - \ln (1-t)|,  \end{align*} 
and  (\ref{ooo}) holds.  So in every  case
\begin{equation}\label{e1}\int \theta \cdot \sum_{i=0}^{q_1}  L_F^i(u_1) \ dm + \int \theta \cdot \sum_{i=0}^{q_2}  L_F^i(u_2) \ dm=O(h).\end{equation}

\noindent {\it Case II. $c$ is a type II point.} Suppose $|J^1|\geq  |J^2|$ (the other case is analogous).   Let  $q_1$  be  the smallest integer such that 
$$ ( \max_{z\in \hat{I}}  |DF(z)|)  |DF^{q_1-1}(F(c))||J^1| > \frac{d}{ \Crr{dist}}.$$ 
Then $F^{q_1}$ is a diffeomorphism on $J^1$ and $J^2$. Let $q_2=q_1$. 
It follows from (\ref{ddis}) and (\ref{bvest}) that
 $$v(L^{q_k}(u_k))\leq  C \frac{1}{|DF^{q_k}(c_k)|}  \frac{|h|}{|J|}\leq \Cll{novad} \frac{|J^1|}{|J|} |h|.$$
 and
 $$|L^{q_k}(u_k)|\leq C \frac{1}{|DF^{q_k}(c_k)|}  \frac{|h|}{|J|}\leq \Crr{novad} \frac{|J^1|}{|J|} |h|.$$
for some $\Crr{novad}$ that depends only on $f$ and $i=1,2$. We conclude that 
\begin{equation} \label{tail} |\sum_{i=0}^\infty \int \theta \cdot L^i(L^{q_1}(u_1)) \ dm|+ |\sum_{i=0}^\infty \int \theta \cdot L^i(L^{q_2}(u_2)) \ dm| \leq   \Cll{nova22}   |h|.\end{equation}

Note also that  $supp \ L_F^i(u_k)\subset F^i(J^k)$ and (\ref{con}), (\ref{pi1}), (\ref{pi2}) hold for $i\leq q_k$. Consequently
\begin{align*}
&\int \theta \sum_{i=0}^{q_k}  L_F^i(u_k) \ dm \\ 
&=  \sum_{i=0}^{q_k} \Big(    \theta(F^i(c_k))  \int L_F^i(u_k) \ dm   +    \int (\theta(x) - \theta(F^i(c_k)))  L_F^i(u_k)(x)  \ dm(x)  \Big) \\ 
&=  \theta(c_k) \int u_k \ dm +   \sum_{i=1}^{q_k}   \theta(F^i(c))  \int u_k \ dm +   O(|h|) \\ 
&=  \sum_{i=1}^{q_k}   \theta(F^i(c))  \int u_k \ dm +   O(|h|).\\ 
\end{align*} 
and  (\ref{pi}) implies
\begin{align*}
&\int \theta \sum_{i=0}^{q_1}  L_F^i(u_1) \ dm + \int \theta \sum_{i=0}^{q_2}  L_F^i(u_2) \ dm \\ 
&=  \sum_{i=1}^{q_1}   \theta(F^i(c)) (  \int u_1 \ dm + \int u_2 \ dm)    +   O(|h|).\\  
&=O(|h|).
\end{align*} 
To conclude the proof of the theorem, note that 
\begin{align} \label{broke} &\psi(x+h)+\psi(x-h)-2\psi(x)\\
&=  \int  \theta \cdot   \Big( \sum_{i=0}^{q-1}  L_F^{i}(1_{[x,x+h]}- 1_{[x-h,x]})  \Big)\ dm \nonumber \\
&+  \int  \theta \cdot   \sum_{i=0}^{q_1-1}  L_F^{i}(u_1) \ dm +  \int  \theta \cdot   \sum_{i=0}^{q_2-1}  L_F^{i}(u_2) \ dm   \nonumber \\
&+ \int  \theta \cdot  \sum_{i=0}^{\infty}  L_F^{i}(L_F^{q_1}u_1) \ dm +  \int  \theta \cdot  \sum_{i=0}^{\infty}  L_F^{i}(L^{q_2}u_2) \ dm   \nonumber   \end{align} 
and apply the previous estimates.
\end{proof} 

\begin{theorem} Let $f \in  \mathcal{B}^k_{exp}(C)$ be a piecewise expanding map.  Suppose that there exists $c\in C\setminus \partial I$ satisfying 

$$  \inf_{x\in \mathcal{O}^+(f,c^\pm)\setminus \hat{C}} dist(x,\hat{C}) > 0$$
and there  are $N_{c^\pm} , M_{c^\pm} \in \mathbb{N}$  with $f^{M_{c^\pm}} (f^{N_{c^\pm}}(c^\pm))=f^{N_{c^\pm}}(c^\pm)$. Let $\phi \in  \mathcal{B}^\beta(C)$, with $\beta \in (0,1)$, such that 
$$\int \phi \Phi_1(\gamma)=0$$
for every $\gamma\in BV$ but 
\begin{align*} 
&\frac{1}{\ln |Df^{M_{c^+}} (f^{N_{c^+}}(c^+))| } \sum_{i=N_{c^+}}^{N_{c^+} + M_{c^+}-1}  \phi(f^i(c^+))\\
&\not= \frac{1}{\ln |Df^{M_{c^-}} (f^{N_{c^-}}(c^-))| } \sum_{i=N_{c^-}}^{N_{c^-} + M_{c^-}-1}  \phi(f^i(c^-)) \end{align*} 
Then 
$$\psi(x)=   \int  1_{[-1,x]}\cdot \Big( \sum_{k=0}^\infty  \sum_{j=0}^{p-1} \phi\circ f^{kp+j} \Big)\ dm$$
is not a Zygmund function.
\end{theorem}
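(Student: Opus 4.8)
The plan is to run the computation from the proof of Theorem~\ref{zygmund} at the single point $x=c$ itself, and to observe that the cancellation that produced the Zygmund bound there is exactly what the present hypothesis forbids. First I would pass to $F=f^T$ as in that proof, with $T$ a common multiple of $p$, $N_{c^\pm}$ and $M_{c^\pm}$; by Theorem~\ref{conv2} the function $\psi$ is unchanged if we replace $p$ by $T$, so we may take $p=T$, set $\theta(x)=\sum_{i=0}^{T-1}\phi(f^i(x))$ and let $C_F$ denote the discontinuity set of $F$ (so $c\in C_F$); then $\psi(x)=\int 1_{[a,x]}\big(\sum_{k\ge 0}\theta\circ F^k\big)\,dm$ is precisely the function analysed in Theorem~\ref{zygmund}. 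The assumption $f^{M_{c^\pm}}(f^{N_{c^\pm}}(c^\pm))=f^{N_{c^\pm}}(c^\pm)$ makes $F(c^+)$ and $F(c^-)$ fixed points of $F$, so $c$ is a ``type~I'' point of $F$ in the language of that proof, and the same elementary bookkeeping used there gives
\[
A_\pm:=\frac{\theta(F(c^\pm))}{\ln|DF(F(c^\pm))|}=\frac{1}{\ln|Df^{M_{c^\pm}}(f^{N_{c^\pm}}(c^\pm))|}\sum_{i=N_{c^\pm}}^{N_{c^\pm}+M_{c^\pm}-1}\phi(f^i(c^\pm)),
\]
so the hypothesis of the theorem says exactly $A_+\neq A_-$. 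Note that the full hypothesis of Theorem~\ref{zygmund} need not hold here, but its Case~I analysis near $c$ is local: it uses only the forward orbits of $c^+$ and $c^-$, which are controlled by $d_c:=\tfrac12\,\mathrm{dist}\big((\mathcal{O}^+(f,c^+)\cup\mathcal{O}^+(f,c^-))\setminus\hat{C},\ \hat{C}\big)>0$.

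Next I would apply the scheme of Theorem~\ref{zygmund} with $x=c$ and $0<h<h_0$, where $h_0>0$ (depending only on $f$ and $c$) is small enough that $[c-h,c+h]\subset I$, that $(c-h,c+h)\cap C_F=\{c\}$, that $[c,c+h]$ and $[c-h,c]$ each lie inside a single branch of $F$, and that $h_0<d_c$. This is the degenerate case $q=0$ of that proof: the interval already contains the critical point, so the preliminary sum over $i<q$ in (\ref{broke}) is empty, while $J=[c-h,c+h]$, $J^1=[c,c+h]$, $J^2=[c-h,c]$, $u_1=1_{[c,c+h]}$, $u_2=-1_{[c-h,c]}$ and $\int u_1\,dm=h=-\int u_2\,dm$. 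Since $c$ is type~I I would invoke Case~I of that proof essentially verbatim, with $d_c$ in place of the global $d$: for $i<q_k$ the images $F^i(J^k)$ shadow the single $F$-periodic cycle carrying $F(c^\pm)$, which stays at distance $\ge d_c$ from $C_F$, so the distortion estimates of Lemma~\ref{bvpartition} apply and the tail terms $\sum_{i\ge 0}\int\theta\cdot L_F^i(L_F^{q_k}u_k)\,dm$ are $O(h)$ exactly as there. This yields, as in the derivation of (\ref{fund}) (with $c_1=c^+$, $c_2=c^-$, and $q_k=-\ln|J^k|/\ln|DF(F(c_k))|+O(1)$),
\[
\int\theta\sum_{i=0}^{q_k}L_F^i(u_k)\,dm=q_k\,\theta(F(c_k))\!\int u_k\,dm+O(h)=-A_\pm\,\ln|J^k|\!\int u_k\,dm+O(h).
\]

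The one departure from the proof of Theorem~\ref{zygmund} now becomes visible: there the two contributions cancelled because $A_+=A_-$, whereas here, using $|J^1|=|J^2|=h$ and $\int u_1\,dm=-\int u_2\,dm=h$, they combine into
\[
\psi(c+h)+\psi(c-h)-2\psi(c)=-A_+\,h\ln h+A_-\,h\ln h+O(h)=-(A_+-A_-)\,h\ln h+O(h).
\]
Dividing by $h$ and letting $h\to 0^+$ gives $\big|\psi(c+h)+\psi(c-h)-2\psi(c)\big|/h\ge|A_+-A_-|\,|\ln h|-O(1)\to+\infty$, since $A_+\neq A_-$; hence $\psi$ fails the Zygmund estimate at the point $c$, as claimed.

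The main obstacle, such as it is, is purely bookkeeping: I must check that the degenerate configuration $x=c$ (with $q=0$, the point $x$ sitting exactly on a discontinuity, and $c$ splitting $[c-h,c+h]$ into two halves of equal length) is a genuine instance of the machinery of Theorem~\ref{zygmund}, and that each of its estimates survives with the local constant $d_c$ in place of the global $d$. The only genuinely exceptional subcase is when the $F$-periodic orbit through $F(c^\pm)$ itself meets $C_F$; this is absorbed by the lateral-limit conventions already in force in Theorem~\ref{zygmund} and introduces no new idea.
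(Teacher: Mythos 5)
Your proposal is correct and follows essentially the same route as the paper: pass to $F=f^T$ and $\theta=\sum_{i=0}^{T-1}\phi\circ f^i$, take $x=c$ with $q=0$, $J^1=[c,c+h]$, $J^2=[c-h,c]$, and reuse the Case~I estimates (\ref{pi}), (\ref{tail}), (\ref{fund}) of Theorem~\ref{zygmund} to obtain $\psi(c+h)+\psi(c-h)-2\psi(c)=(A_--A_+)\,h\ln h+O(h)$, which violates the Zygmund bound since $A_+\neq A_-$. Your remark that only the local constant $d_c$ attached to the orbits of $c^\pm$ is needed matches the paper's choice of $d$, and the appeal to Theorem~\ref{conv2} for regrouping the series into $T$-blocks is harmless (it is really just Theorem~\ref{vvv} with $p=T$), so there is no substantive difference.
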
 
\begin{proof} Let $T$ be a common multiple of  the integers  $p(f)$, $N_{c^\pm}$  and  $M_{c^\pm}$. Let $F(x)=f^T(x)$. Define $\theta$ as in the proof of Theorem \ref{zygmund}. Then we have $F^2(c^\pm)=F(c^\pm)$ and 
$$\frac{\theta(F(c^+))}{\ln |DF (F(c^+))| } \neq  \frac{\theta(F(c^-))}{\ln |DF (F(c^-))| }.$$
Let 
$$  d=\frac{1}{2} \inf_{x\in \mathcal{O}^+(F,c)\setminus \hat{C}_F} dist(x,\hat{C}_F) > 0.$$
Using the same notation as in the proof of Theorem \ref{zygmund} take $x=c$ and $0< h< d$. Then $q=0$, $J=[c-h,c+h]$, $c_1=c^+$, $c_2=c^-$, $J^1=[c,c+h]$, $J^2=[c-h,c]$,  $u_1=1_{[c,c+h]}$ and $u_2=-1_{[c-h,c]}$  and we can write (\ref{broke}). Since  (\ref{pi}),  (\ref{tail}) and (\ref{fund}) holds 
\begin{align*} &\psi(c+h)+\psi(c-h)-2\psi(c)\\
&=\Big(  \frac{ \theta(F(c^-))}{\ln  |DF(F(c^-))|}   - \frac{ \theta(F(c^+))}{\ln  |DF(F(c^+))|} \Big)  |h|  \ln |h|   \ dm 
                  + O(|h|), \end{align*} 
so $\psi$ is not  Zygmund. 
\subsection{Invariant distributions which are not measures}
An interesting application of Birkhoff sums as distributions is the construction of distributions  in $\Theta\in BV^\star$ which are {\it invariant } with respect to a certain piecewise expanding map $f\in \mathcal{B}^k_{exp}(C)$, that is
$$\Theta(g)=\Theta(g\circ f)$$
for every $g\in BV$, but that are {\it not}  signed measures. 
Choose  $\phi \in \mathcal{B}^\beta(C)\cap BV$ such that 
$$\int \phi   \Phi_1(\gamma) \ dm=0$$
for every $\gamma\in BV$. Consider the distribution $\Theta_\phi\in BV^\star$ given by
$$\Theta_\phi(g)=\sigma_m(g,\overline{\phi})=\sum_\ell m(A_\ell) \sigma_{\mu_\ell} (g,\overline{\phi}),$$
where $\sigma_m$ is the hermitian form defined in Theorem \ref{cohomo}.

The following result  tells us that it is quite rare that $\Theta_\phi$ is a signed measure. 
\begin{theorem}\label{xcxc} The  functional $\Theta_\phi\in BV^\star$ is a $f$-invariant distribution. The  following statements are equivalent
\begin{itemize}
\item[A.]  $\Theta_\phi$ is a signed measure, that is,  there is a signed regular measure $\nu$ such that  for every $\psi\in BV\cap C^0(I)$
$$\Theta_\phi(\psi)=\int \psi \ d\nu.$$
\item[B.] $\phi=\psi\circ f -\psi$, where $\psi\in L^2(m)$ and $\psi\in L^\infty(S_\ell)$ for every $\ell\leq E$.
\item[C.]  $\Theta_\phi=0$.
\end{itemize}
Moreover $A.-C.$ implies
\begin{itemize}
\item[D.]  We have that 
\begin{equation}\label{inficoddd} \sum_{j=0}^{M-1} \phi(f^j(q))=0\end{equation} 
holds for every $M$ and $q\in \hat{S}_\ell$, with $\ell\leq E$,  such that $f^M(q)=q$.
\end{itemize} 
Furthermore if $f$ is markovian, $p(f)=1$ and it has an absolutely continuous ergodic invariant probability whose support is $I$ then $D.$ is equivalent to $A.-C.$
\end{theorem}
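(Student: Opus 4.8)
The plan is to establish, in order: the $f$-invariance of $\Theta_\phi$; the implications $C\Rightarrow A$ (immediate), $C\Rightarrow B$, $B\Rightarrow C$ and $A\Rightarrow C$ (which close the equivalence); then $A$--$C\Rightarrow D$; and finally the converse $D\Rightarrow A$ under the extra hypotheses. For invariance, write $\Theta_\phi(g)=\sum_\ell m(A_\ell)\sigma_{\mu_\ell}(g,\overline\phi)$ and use that $\mu_\ell$ is $f$-invariant: the Birkhoff sums $\sum_{i=1}^{N}g\circ f^i$ and $\sum_{i=0}^{N-1}g\circ f^i$ differ by $g\circ f^N-g$, whose $L^2(\mu_\ell)$-norm is at most $2|g|_{L^2(\mu_\ell)}$, while $\int\overline\phi\,d\mu_\ell=0$ (equivalent to (\ref{ficod})) forces $|\sum_{j<N}\overline\phi\circ f^j|_{L^2(\mu_\ell)}=O(\sqrt N)$; a Cauchy--Schwarz estimate then gives $\sigma_{\mu_\ell}(g\circ f,\overline\phi)=\sigma_{\mu_\ell}(g,\overline\phi)$, hence $\Theta_\phi(g\circ f)=\Theta_\phi(g)$. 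The implication $C\Rightarrow A$ holds with $\nu=0$. For $C\Rightarrow B$, observe that $\Theta_\phi(\overline\phi)=\sigma_m(\overline\phi,\overline\phi)=\sigma^2_m(\overline\phi)=\sigma^2_m(\phi)$, so $\Theta_\phi=0$ gives $\sigma^2_m(\phi)=0$, and Theorem \ref{cohomo} (equivalence of its items $A$, $D$, $E$, together with the fact extracted in its proof that the $L^2(m)$ solution is bounded on each $S_\ell$) yields $g\in L^2(m)$ with $\phi=g\circ f-g$ and $g\in L^\infty(S_\ell)$ for all $\ell$, which is $B$.

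\noindent\emph{$B\Rightarrow C$.} With $\phi=\psi\circ f-\psi$, $\psi\in L^2(m)\cap\bigcap_\ell L^\infty(S_\ell)$, telescoping gives $\sum_{j<N}\phi\circ f^j=\psi\circ f^N-\psi$ $\mu_\ell$-a.e.\ (the $\mu_\ell$-orbit stays in $S_\ell$), and $\psi|_{S_\ell}\in L^2(\mu_\ell)$. Decomposing $g=\int g\,d\mu_\ell+\tilde g$ and noting $\sigma_{\mu_\ell}(1,\overline\phi)=\lim_N\sum_{j<N}\int\overline\phi\,d\mu_\ell=0$, Cauchy--Schwarz in $L^2(\mu_\ell)$ together with finiteness of $\sigma^2_{\mu_\ell}(\tilde g)$ gives $|\sigma_{\mu_\ell,N}(g,\overline\phi)|\le N^{-1}|\sum_{j<N}\tilde g\circ f^j|_{L^2(\mu_\ell)}\,|\psi\circ f^N-\psi|_{L^2(\mu_\ell)}=O(N^{-1/2})$, so $\Theta_\phi=0$.

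\noindent\emph{$A\Rightarrow C$ (main step).} Using the Green--Kubo formula (\ref{vari}), Theorem \ref{conv2}, and Lemma \ref{nulo}, the peripheral contributions $\sum_{\lambda\ne 1}(1-\lambda)^{-1}\Phi_\lambda(\cdot)$ and the tail $\sum_k K^k(\cdot)$ are $BV$ functions, and one obtains that $\Theta_\phi$ coincides, as a distribution, with $\big(\sum_\ell m(A_\ell)\rho_\ell\big)\,\mathcal B$ up to a $BV$ function, where $\mathcal B=\sum_{k\ge 0}\sum_{j=0}^{p-1}\phi\circ f^{kp+j}$ is the Birkhoff-sum distribution of Theorem \ref{vvv}, whose primitive on $I$ is $\psi$. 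Now suppose $A$ holds but $\sigma^2_m(\phi)>0$; by (\ref{variance}) fix $\ell_0$ with $\sigma^2_{\mu_{\ell_0}}(\phi)>0$. The sets $S_\ell$ are pairwise disjoint finite unions of intervals and $\inf_{S_{\ell_0}}\rho_{\ell_0}>0$ by \cite{bg}, so $\rho_{\ell_0}^{-1}$ is bounded and $BV$ on $S_{\ell_0}$; restricting the signed measure $\Theta_\phi$ to the interior of $S_{\ell_0}$, multiplying by $\rho_{\ell_0}^{-1}$ and subtracting the $BV$ part shows that $\mathcal B$ restricted to $S_{\ell_0}$ is a signed measure, i.e.\ $\psi$ has bounded variation on $S_{\ell_0}$; since $\psi$ is continuous (Theorem \ref{vvv}) and $S_{\ell_0}$ has full measure in $\mathrm{supp}\,\mu_{\ell_0}$, this contradicts Theorem \ref{cite}. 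Hence $\sigma^2_m(\phi)=0$, so $B$ holds by Theorem \ref{cohomo} and $C$ holds by the previous step. (Equivalently, one may first extend $\Theta_\phi(g)=\int g\,d\nu$ from $BV\cap C^0$ to all $g\in BV$ — every term of (\ref{vari}) is continuous for sequences bounded in $BV$ and convergent in $L^1(m)$, so a mollification argument applies — and then take $g=1_{[a,x]}$.)

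\noindent\emph{$A$--$C\Rightarrow D$ and the converse.} Given $A$--$C$ we have $\sigma^2_m(\phi)=0$, which is hypothesis $A$ of Theorem \ref{cohomo}, whose conclusion $F$ is precisely $D$. For the last assertion, when $f$ is Markov with $p(f)=1$ and a single absolutely continuous ergodic invariant probability of full support (so $E=1$ and $S_1=I$ up to $m$-null, and $(f,\mu_1)$ is mixing), condition $D$ says that every periodic Birkhoff sum of $\phi$ vanishes; coding $f$ by the subshift of finite type attached to the Markov partition turns $\phi$ into a function H\"older on cylinders, and the Liv\v{s}ic theorem for subshifts of finite type produces a bounded, cylinder-wise H\"older $g$ with $\phi=g\circ f-g$; since $g\in L^\infty(I)\subset L^2(m)$ and $S_1=I$, this is $B$, hence $C$, hence $A$. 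The step I expect to be the main obstacle is the distributional identification of $\Theta_\phi$ in $A\Rightarrow C$ — extracting from (\ref{vari}) the decomposition ``$BV$ function plus $\rho$ times the Birkhoff-sum distribution'' and keeping the peripheral-spectrum bookkeeping under control when $p(f)>1$, so that the division by $\rho_{\ell_0}$ on $S_{\ell_0}$ is legitimate — with a secondary difficulty being the precise formulation of the Liv\v{s}ic theorem needed in the piecewise-expanding Markov category.
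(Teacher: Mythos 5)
Your proposal is essentially correct and, despite some reshuffling, follows the same route as the paper: the Broise-type spectral expansion of $\sigma_{\mu_\ell}(\cdot,\overline{\phi})$ into an $L^1(m)$-bounded part plus the $K$-tail (equivalently, $\rho_\ell$ times the Birkhoff-sum distribution via Lemma \ref{kk}), Theorem \ref{cohomo} for the coboundary statements and for $D$, Theorem \ref{cite} for the non-BV obstruction, and Parry--Pollicott's Liv\v{s}ic theorem for the Markov converse. The genuine differences are cosmetic: you prove invariance and $B\Rightarrow C$ by direct $O(\sqrt{N})$ telescoping estimates where the paper uses Cauchy--Schwarz for the semi-definite form $\sigma_m$ (note that zero mean alone does not ``force'' $|\sum_{j<N}\overline{\phi}\circ f^j|_{L^2(\mu_\ell)}=O(\sqrt N)$; you need the existence of $\sigma^2_{\mu_\ell}(\phi)$ from Keller/(\ref{vari}), which is available), and in $A\Rightarrow C$ you contradict Theorem \ref{cite} directly instead of invoking Theorem \ref{lipsc}, which packages exactly that chain.

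The one thin spot is the step you yourself single out: passing from the representation $\Theta_\phi(\gamma)=\int\gamma\,d\nu$ on $BV\cap C^0(I)$ to the conclusion that the primitive $\psi$ has bounded variation on $S_{\ell_0}$. ``Restricting to $S_{\ell_0}$ and multiplying by $\rho_{\ell_0}^{-1}$'' is not yet an argument, because $\gamma\rho_{\ell_0}^{-1}$ (or $1_{[x,y]}$) is not continuous, so it cannot be fed into the measure representation directly; and your alternative remark that ``every term of (\ref{vari}) is continuous for sequences bounded in $BV$ and convergent in $L^1(m)$'' is true but nontrivial precisely for the tail term $\gamma\mapsto\sum_i\int K^i(\gamma\rho_{\ell_0})\overline{\phi}\,dm$: bounded-$BV$ plus $L^1$ convergence does not give $BV$-norm convergence, so this continuity is exactly the log-Lipschitz modulus of Theorem \ref{vvv}.B, which is what the paper isolates as estimate (\ref{o111}) before approximating $1_{[x,y]}$ (and checking $\nu$ has no atoms) to get $y\mapsto\Theta_\phi(1_{[x,y]}\rho_{\ell_0}^{-1})$ of bounded variation. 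Since Theorem \ref{vvv}.B is at your disposal, this is a fixable omission rather than a wrong turn, but it should be stated as the load-bearing estimate. Two further cosmetic corrections: the Birkhoff-sum distribution appearing in the decomposition is that of $\overline{\phi}$, not $\phi$ (harmless for the BV/non-BV dichotomy since $\sigma_{\mu_\ell}(\overline{\phi})=\sigma_{\mu_\ell}(\phi)$), and in $C\Rightarrow B$ one must use, as you note, that the proof of Theorem \ref{cohomo} produces a single $g\in L^2(m)$ with $1_{S_\ell}g\in L^\infty$, not merely separate bounded solutions on each $S_\ell$.
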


\begin{proof} Note that $\sigma_m^2(g\circ f - g)=0$. By the Cauchy-Schwarz inequality
$$|\sigma_m(g\circ f,\overline{\phi})- \sigma_m(g,\overline{\phi})|^2=|\sigma_m(g\circ f - g,\phi)|^2\leq \sigma_m^2(g\circ f - g)\sigma_m^2(\overline{\phi})=0,$$
so $\Theta_\phi$ is $f$-invariant.  Using exactly the same argument as in Broise \cite[Chapter 6]{broise} we obtain
\begin{align*}  &\sigma_{\mu_{\ell_0}}(\gamma,\overline{ \phi})\\
&=  \lim_{N \rightarrow \infty} \frac{1}{N}  \int \Big(\sum_{i=0}^{N-1} \gamma \circ f^i  \Big)\Big(\sum_{i=0}^{N-1} \overline{ \phi} \circ f^i  \Big)   \ \rho_{\ell_0} \ dm\\
&=-\int \gamma\overline{ \phi} \rho_{\ell_0} \ dm +\lim_{N} \sum_{i=0}^N (1-\frac{j}{N})\Big(  \int \gamma\overline{ \phi}\circ f^j \rho_{\ell_0} \ dm+ \int \gamma\circ f^j\overline{ \phi} \rho_{\ell_0} \ dm \Big)\\
&=-\int \gamma\overline{\phi} \rho_{\ell_0} \ dm  + \sum_{\lambda\in \lambda_1\setminus \{1\} }\frac{1}{1-\lambda} \int \Phi_\lambda(\gamma \rho_{\ell_0})\overline{ \phi}  \ dm + \sum_{i=0}^\infty \int K^i(\gamma \rho_{\ell_0})\overline{ \phi} \ dm \\
&+ \sum_{\lambda\in \lambda_1\setminus \{1\} }\frac{1}{1-\lambda} \int \Phi_\lambda(\overline{ \phi}) \gamma \rho_{\ell_0}  \ dm + \sum_{i=0}^\infty \int K^i(\overline{ \phi}) \gamma \rho_{\ell_0} \ dm
\end{align*}
for every $\gamma\in BV$, $\ell_0\leq E$. 
One can see that 
$$\gamma \mapsto \sigma_{\mu_{\ell_0}}(\gamma,\overline{ \phi}) -  \sum_{i=0}^\infty \int K^i(\gamma \rho_{\ell_0}) \overline{ \phi}\ dm$$
is a bounded functional considering the $L^1(m)$ norm in its domain $BV$. Since $K$ is a contraction on $BV$ and $BV$ is a Banach algebra it follows that 
 $$\sum_{i=0}^\infty \int K^i(\gamma \rho_{\ell_0}) \overline{ \phi}\ dm$$
 belongs to $BV^\star$, so consequently
 $$\gamma \mapsto \sigma_{m}(\gamma,\overline{ \phi}) $$
 is in  $BV^\star$. 
 By  Theorem \ref{vvv}.B we have that there are $\Cll{o11},\Cll{o22}$ such that
$$\sigma_{\mu_{\ell_0}}(\gamma,\overline{ \phi}) \leq  \Crr{o11}((\ln |\gamma|_{BV}-  \ln |\gamma|_{L^1(m)})+\Crr{o22})|\gamma|_{L^1(m)}  $$
for every $\ell_0\leq E$, so 
\begin{equation}\label{o111}  \sigma_{m}(\gamma,\overline{ \phi}) \leq  \Crr{o11}((\ln |\gamma|_{BV}-  \ln |\gamma|_{L^1(m)})+\Crr{o22})|\gamma|_{L^1(m)}.\end{equation} 

\noindent Of course $C.\implies A.$ \\

\noindent {$A.\implies B. \ and \ D. $} Suppose that  $\Theta_\phi$ is a signed measure, that is, there is a signed regular measure $\nu$ such that  for every $\psi\in BV\cap C^0(I)$
$$\Theta_\phi(\psi)=\int \psi \ d\nu.$$
Choosing  $\gamma=1_{[x,y]}$ it is easy  to see that there is a sequence  $\gamma_k\in BV\cap C^0(I)$ such that $\sup_k |\gamma_k|_{BV} < \infty$, $\lim_k \gamma_k(z)= 1_{[x,y]}(z)$ for every $z$. In particular  $\lim_k |\gamma -\gamma_k|_{L^1(m)}=0$ and  consequently (\ref{o111}) implies
 $$\Theta_\phi(1_{[x,y]})=\lim_k \theta_\phi(g_k)=\lim_k \int g_k \ d\nu= \int 1_{[x,y]} \ d\nu.$$
A similar argument shows that 
$$y\mapsto \Theta_\phi(1_{[x,y]})$$
is continuous. So $\nu$ does not have atoms and $ \Theta_\phi(1_{[x,y]})$ is continuous and it has  bounded variation with respect to $y$. 

Given $\gamma\in BV$  there is a sequence  $\gamma_k\in BV\cap C^0(I)$ such that $\sup_k  |\gamma_k|_{BV} < \infty$ and  $\lim_k \gamma_k(z)= \gamma(z)$ for every $z$ except for $z$ in the set of discontinuities of $\gamma$, that is countable. Using an argument similar to the argument with $\gamma=1_{[x,y]}$, and using that $\nu$ does not have atoms,  we can conclude that
$$\Theta_\phi(\gamma)=\int \gamma \  d\nu$$
for every $\gamma\in BV$.

In particular for  each $x\in S_{\ell_0}$, with $\ell_0\leq E$  and $y$ such that $[x,y]\subset S_{\ell_0}$ we can choose 
$\gamma =1_{[x,y]}\rho_{\ell_0}^{-1}\in BV$ and we can prove that 
$$y\mapsto \Theta_\phi(1_{[x,y]}\rho_{\ell_0}^{-1})$$
is continuous and it has bounded variation.  We have
$$\Theta_\phi(1_{[x,y]}\rho_{\ell_0}^{-1})=m(A_{\ell_0}) \sigma_{\mu_{\ell_0}}(1_{[x,y]}\rho_{\ell_0}^{-1}, \phi),$$
so  $y\mapsto \sigma_{\mu_{\ell_0}}(1_{[x,y]}\rho_{\ell_0}^{-1}, \phi) $ is a bounded variation function on an interval. One can see that 
$$y\mapsto \sigma_{\mu_{\ell_0}}(1_{[x,y]}\rho_{\ell_0}^{-1},\overline{ \phi}) -  \sum_{i=0}^\infty \int K^i(1_{[x,y]}) \overline{\phi}   \ dm$$
is a Lipschitz function, so we conclude that 
$$y\mapsto u(y)=\sum_{i=0}^\infty \int K^i(1_{[x,y]}) \overline{\phi} \ dm= \sum_{N=0}^\infty \sum_{j=0}^{p-1} \int L^{Np+j}(1_{[x,y]}) \overline{\phi}  \ dm  $$
has  bounded variation. By Theorem \ref{lipsc}  this  implies that (\ref{inficoddd}) holds for every $q$ and $m$ such that  $f^m(q)=q$.\\

\noindent {\it $B. \implies C.$}  Note that if $B.$ holds, then $\sigma^2_m(\phi)=\sigma^2_m(\overline{\phi})=0$ and the Schwartz inequality for the hermitian form $\sigma$ implies
$$|\Theta_\phi(g)|=|\sigma_m(g,\overline{\phi})|\leq \sigma_m(g)\sigma_m(\overline{\phi})=0,$$
so $C.$ holds.\\

\noindent {\it $D. \implies A., B., C.$ under additional assumptions} The markovian property  of $f$ and the mixing property of the unique invariant absolutely continuous probability  implies that  if $D.$ holds we can use the Livisc-type result for subshifts of finite type as in Parry and Pollicott \cite[Proposition 3.7]{pp}  to conclude that there is a piecewise H\"older continuous function $\psi$ satisfying  $\phi=\psi\circ f -\psi.$ on $I$. So $B$ holds. 
\end{proof}

\bibliographystyle{abbrv}
\bibliography{bibliografia}

\begin{thebibliography}{10}

\bibitem{ak}
A.~Avila and A.~Kocsard.
\newblock Cohomological equations and invariant distributions for minimal
  circle diffeomorphisms.
\newblock {\em Duke Math. J.}, 158(3):501--536, 2011.

\bibitem{bbook}
V.~Baladi.
\newblock {\em Dynamical zeta functions and dynamical determinants for
  hyperbolic maps}, volume~68 of {\em Ergebnisse der Mathematik und ihrer
  Grenzgebiete. 3. Folge. A Series of Modern Surveys in Mathematics [Results in
  Mathematics and Related Areas. 3rd Series. A Series of Modern Surveys in
  Mathematics]}.
\newblock Springer, Cham, 2018.
\newblock A functional approach.

\bibitem{bs0}
V.~Baladi and D.~Smania.
\newblock Linear response formula for piecewise expanding unimodal maps.
\newblock {\em Nonlinearity}, 21(4):677--711, 2008.

\bibitem{smooth}
V.~Baladi and D.~Smania.
\newblock Smooth deformations of piecewise expanding unimodal maps.
\newblock {\em Discrete Contin. Dyn. Syst.}, 23(3):685--703, 2009.

\bibitem{alternative}
V.~Baladi and D.~Smania.
\newblock Alternative proofs of linear response for piecewise expanding
  unimodal maps.
\newblock {\em Ergodic Theory Dynam. Systems}, 30(1):1--20, 2010.

\bibitem{bt2}
V.~Baladi and M.~Tsujii.
\newblock Anisotropic {H}\"{o}lder and {S}obolev spaces for hyperbolic
  diffeomorphisms.
\newblock {\em Ann. Inst. Fourier (Grenoble)}, 57(1):127--154, 2007.

\bibitem{bkl}
M.~Blank, G.~Keller, and C.~Liverani.
\newblock Ruelle-{P}erron-{F}robenius spectrum for {A}nosov maps.
\newblock {\em Nonlinearity}, 15(6):1905--1973, 2002.

\bibitem{bg}
A.~Boyarsky and P.~G\'{o}ra.
\newblock {\em Laws of chaos}.
\newblock Probability and its Applications. Birkh\"{a}user Boston, Inc.,
  Boston, MA, 1997.
\newblock Invariant measures and dynamical systems in one dimension.

\bibitem{broise}
A.~Broise.
\newblock Transformations dilatantes de l'intervalle {$[0,1]$} et
  th\'{e}or\`emes limites.
\newblock In {\em Fascicule de probabilit\'{e}s}, volume 1992 of {\em Publ.
  Inst. Rech. Math. Rennes}, page~42. Univ. Rennes I, Rennes, 1992.

\bibitem{cobos2016}
F.~Cobos, {\'O}.~Dom{\'\i}nguez, and H.~Triebel.
\newblock Characterizations of logarithmic {B}esov spaces in terms of
  differences, {F}ourier-analytical decompositions, wavelets and semi-groups.
\newblock {\em Journal of Functional Analysis}, 270(12):4386--4425, 2016.

\bibitem{lima2}
A.~de~Lima and D.~Smania.
\newblock Central limit theorem for the modulus of continuity of averages of
  observables on transversal families of piecewise expanding unimodal maps.
\newblock {\em J. Inst. Math. Jussieu}, 17(3):673--733, 2018.

\bibitem{central}
A.~de~Lima and D.~Smania.
\newblock Central limit theorem for generalized {W}eierstrass functions.
\newblock {\em Stoch. Dyn.}, 19(1):1950002, 18, 2019.

\bibitem{goff}
C.~Goffman and F.~C. Liu.
\newblock Lusin type theorems for functions of bounded variation.
\newblock {\em Real Anal. Exchange}, 5(2):261--266, 1979/80.

\bibitem{gl2}
S.~Gou\"{e}zel and C.~Liverani.
\newblock Banach spaces adapted to {A}nosov systems.
\newblock {\em Ergodic Theory Dynam. Systems}, 26(1):189--217, 2006.

\bibitem{grafakos}
L.~Grafakos.
\newblock {\em Classical {F}ourier analysis}, volume 249 of {\em Graduate Texts
  in Mathematics}.
\newblock Springer, New York, second edition, 2008.

\bibitem{dois}
C.~Grotta-Ragazzo and D.~Smania.
\newblock Birkhoff sums as distributions ii: Applications to deformations of
  dynamical systems. {A}rxiv 2104.04820, 2021.

\bibitem{Hildebrandt}
T.~H. Hildebrandt.
\newblock Definitions of {S}tieltjes {I}ntegrals of the {R}iemann {T}ype.
\newblock {\em Amer. Math. Monthly}, 45(5):265--278, 1938.

\bibitem{hormNLH}
L.~H{\"o}rmander.
\newblock {\em Lectures on nonlinear hyperbolic differential equations},
  volume~26.
\newblock Springer Science \& Business Media, 1997.

\bibitem{horm1}
L.~H{\"o}rmander.
\newblock Introduction.
\newblock In {\em The Analysis of Linear Partial Differential Operators I},
  pages 1--4. Springer, 2003.

\bibitem{keller}
G.~Keller.
\newblock Generalized bounded variation and applications to piecewise monotonic
  transformations.
\newblock {\em Z. Wahrsch. Verw. Gebiete}, 69(3):461--478, 1985.

\bibitem{ly}
A.~Lasota and J.~A. Yorke.
\newblock On the existence of invariant measures for piecewise monotonic
  transformations.
\newblock {\em Trans. Amer. Math. Soc.}, 186:481--488 (1974), 1973.

\bibitem{ls}
R.~Leplaideur and B.~Saussol.
\newblock Central limit theorem for dimension of {G}ibbs measures in hyperbolic
  dynamics.
\newblock {\em Stoch. Dyn.}, 12(2):1150019, 24, 2012.

\bibitem{livsic}
A.~N. Liv\v{s}ic.
\newblock Certain properties of the homology of {$Y$}-systems.
\newblock {\em Mat. Zametki}, 10:555--564, 1971.

\bibitem{lyons}
T.~J. Lyons, M.~Caruana, and T.~L\'{e}vy.
\newblock {\em Differential equations driven by rough paths}, volume 1908 of
  {\em Lecture Notes in Mathematics}.
\newblock Springer, Berlin, 2007.
\newblock Lectures from the 34th Summer School on Probability Theory held in
  Saint-Flour, July 6--24, 2004, With an introduction concerning the Summer
  School by Jean Picard.

\bibitem{navas}
A.~Navas and M.~Triestino.
\newblock On the invariant distributions of {$C^2$} circle diffeomorphisms of
  irrational rotation number.
\newblock {\em Math. Z.}, 274(1-2):315--321, 2013.

\bibitem{pp}
W.~Parry and M.~Pollicott.
\newblock Zeta functions and the periodic orbit structure of hyperbolic
  dynamics.
\newblock {\em Ast\'{e}risque}, (187-188):268, 1990.

\bibitem{Saussol}
B.~Saussol.
\newblock Absolutely continuous invariant measures for multidimensional
  expanding maps.
\newblock {\em Israel J. Math.}, 116:223--248, 2000.

\bibitem{triebel}
H.~Triebel.
\newblock Theory of function spaces (reprint of the 1983 edition), 2010.

\end{thebibliography}

\end{document}